\newtheorem{thm}{Theorem}[section]
\newtheorem{lem}[thm]{Lemma}
\newtheorem{cor}[thm]{Corollary}
\newtheorem{prop}[thm]{Proposition}
\theoremstyle{definition}
\newtheorem{dfn}[thm]{Definition}
\newtheorem{example}[thm]{Example}
\newtheorem{rmk}[thm]{Remark}
\numberwithin{equation}{section}
\title[Fourier multipliers in Banach function spaces]{Fourier multipliers in Banach function spaces with UMD concavifications}
\author{Alex Amenta, Emiel Lorist, and Mark Veraar}
\address{Delft Institute of Applied Mathematics \\ Delft University of Technology \\ P.O. Box 5031\\ 2600 GA Delft \\The Netherlands}
\thanks{The authors are supported by the VIDI subsidy 639.032.427 of the Netherlands Organisation for Scientific Research (NWO)}
\email{Amenta@fastmail.fm}
\email{E.Lorist@tudelft.nl}
\email{M.C.Veraar@tudelft.nl}
\newcommand{\CC}{\mathbb{C}}
\newcommand{\EE}{\mathbb{E}}
\newcommand{\NN}{\mathbb{N}}
\newcommand{\RR}{\mathbb{R}}
\newcommand{\PP}{\mathbb{P}}
\newcommand{\TT}{\mathbb{T}}
\newcommand{\ZZ}{\mathbb{Z}}
\newcommand{\FF}{\mathscr{F}}
\newcommand{\Sch}{\mathcal{S}}
\newcommand{\calL}{\mc{L}}
\newcommand{\UMD}{\operatorname{UMD}}
\newcommand{\LPR}{\operatorname{LPR}}
\newcommand{\ind}{{{{\bf 1}}}}
\newcommand{\loc}{\operatorname{loc}}
\newcommand{\mb}{\mathbf}
\newcommand{\mc}{\mathcal}
\newcommand{\spn}{\operatorname{span}}
\newcommand{\map}[3]{#1 \colon #2 \rightarrow #3}
\newcommand{\dd}{\hspace{2pt}\mathrm{d}}
\newcommand{\el}[2]{\ell^{#1}(\ell^{#2})} 
\newcommand{\elR}[3]{[#3]_{\ell^{#1}(\ell^{#2})}} 
\newcommand{\elRone}[2]{[#2]_{\ell^{#1}}}
\newcommand{\elRr}[1]{[#1]_{\mc{R}}}
\newcommand{\wh}{\widehat}
\newcommand{\wt}{\widetilde}
\newcommand{\inc}{\mb{\phi}}
\DeclarePairedDelimiter\abs{\lvert}{\rvert}
\DeclarePairedDelimiter\brac[]
\DeclarePairedDelimiter\cbrace\{\}
\DeclarePairedDelimiter\ha()
\DeclarePairedDelimiter\nrm{\lVert}{\rVert}
\newcommand{\nrms}[1]{\Bigl\|#1\Bigr\|}
\newcommand{\abss}[1]{\Bigl|#1\Bigr|}
\newcommand{\has}[1]{\Bigl(#1\Bigr)}
\newcommand{\cbraces}[1]{\Bigl\{#1\Bigr\}}
\newcommand {\Cs}{{\mathscr C}}
\begin{document}
\begin{abstract}
  We prove various extensions of the Coifman--Rubio de Francia--Semmes multiplier theorem to operator-valued multipliers on Banach function spaces.
  Our results involve a new boundedness condition on sets of operators which we call $\el{r}{s}$-boundedness, which implies $\mc{R}$-boundedness in many cases.
  The proofs are based on new Littlewood--Paley--Rubio de Francia-type estimates in Banach function spaces which were recently obtained by the authors.
\end{abstract}

\keywords{Fourier multipliers, UMD Banach function spaces, bounded $s$-variation, Littlewood--Paley--Rubio de Francia inequalities, Muckenhoupt weights, Complex interpolation.}

\subjclass[2010]{Primary: 42B15 Secondary: 42B25; 46E30, 47A56}


\maketitle

\section{Introduction}

In \cite{jR85} Rubio de Francia proved a surprising extension of the classical Littlewood--Paley square function estimate: for all $p \in [2,\infty)$ there exists a constant $C_p > 0$ such that for \emph{any} collection $\mc{I}$ of mutually disjoint intervals in $\RR$, the estimate
\begin{equation}\label{eqn:LPRest}
  \bigg\| \big( \sum_{I \in \mc{I}} |S_I f|^2 \big)^{1/2} \bigg\|_{L^p(\RR)} \leq C_p \| f \|_{L^p(\RR)}
\end{equation}
holds for all Schwartz functions $f \in \mc{S}(\RR)$, where $S_I$ is the Fourier projection onto $I$.
As a consequence, in \cite{CRS88} Coifman, Rubio de Francia, and Semmes showed that if $p \in (1,\infty)$ and $\frac{1}{s}>\big|\frac{1}{p}-\frac12\big|$, then every $\map{m}{\RR}{\CC}$ of bounded $s$-variation uniformly on dyadic intervals induces a bounded Fourier multiplier $T_m$ on $L^p(\RR)$.
This is analogous to the situation for the Marcinkiewicz multiplier theorem (the $s=1$ case of the Coifman--Rubio de Francia--Semmes theorem), which follows from the classical Littlewood--Paley theorem.

Consider a Banach space $X$.
We are interested in analogues of the results above for operator-valued multipliers on $X$-valued functions; that is, for multipliers $\map{m}{\RR}{\mc{L}_b(X)}$, where $\mc{L}_b(X)$ denotes the space of bounded linear operators on $X$, and where we consider a natural extension of the Fourier transform which acts on $X$-valued functions.
A necessary condition for boundedness of the Fourier multiplier $T_m$ on some Bochner space $L^p(\RR;X)$ is that the range $m(\RR)$ is \emph{$\mc{R}$-bounded} (see Remark \ref{rem:Rnecessity}).
$\mc{R}$-boundedness is a probabilistic strengthening of uniform boundedness which holds automatically for bounded scalar-valued multipliers.
Following the breakthrough papers \cite{CPSW00, We01} there has been an extensive study of operator-valued multiplier theory, in which $\mc{R}$-boundedness techniques are central.
For example, Marcinkiewicz-type theorems were obtained in \cite{AreBu02, BerGil94, Bou86, CPSW00, HHN02,StrWeis08, We01}.
We refer to \cite{HNVW16} for a more detailed historical description.

An operator-valued analogue of the Coifman--Rubio de Francia--Semmes theorem was obtained in \cite{HP06}.
There the Banach space $X$ was assumed to satisfy the so-called $\LPR_{p}$ \emph{(Littlewood--Paley--Rubio de Francia)} property, which was previously studied in \cite{BGT03,GilTor04,HP06,HTY09,PSX12}.
This is a generalisation of the square function estimate \eqref{eqn:LPRest} which may be formulated for all Banach spaces, but which may not hold.
Naturally, $\mc{R}$-boundedness assumptions play a role in the results of \cite{HP06}.
In \cite{ALV1} we proved a range of Littlewood--Paley--Rubio de Francia-type estimates for \emph{Banach function spaces}, including the $\LPR_p$ property, under assumptions involving the $\UMD$ property and convexity (generalising a key result of \cite{PSX12}).
The main goal of this paper is to prove Coifman--Rubio de Francia--Semmes type results for such Banach function spaces.

The following multiplier theorem is the fundamental result of this paper.
Let $\Delta = \{ \pm[2^k, 2^{k+1}), k \in \ZZ\}$ denote the standard dyadic partition of $\RR$.
Let $X$ and $Y$ be Banach function spaces, and for a set of bounded linear operators $\mc{T} \subset \mc{L}_b(X,Y)$ let $V^s(\Delta;\mc{T})$ denote the space of functions $\map{m}{\RR}{\spn(\mc{T})}$ with bounded $s$-variation uniformly on dyadic intervals $J\in \Delta$, measured with respect to the Minkowski norm on $\spn(\mc{T})$ (see below Definition \ref{def:svariation}).
Denote the $q$-concavification of a Banach function space $X$ by $X^q$ (see Section \ref{subs:UMD}).

\begin{thm}\label{thm:main-multintro}
  Let $q \in (1,2]$, $p \in (q, \infty)$, $s\in [1, q)$, and let $w$ be a weight in the Muckenhoupt class $A_{p/q}$.
  Let $X$ and $Y$ be Banach function spaces such that $X^q$ and $Y$ have the $\UMD$ property.
  Let $\mc{T} \subset \mc{L}_b(X,Y)$ be absolutely convex and $\el{2}{q^\prime}$-bounded, and suppose that $m \in V^s(\Delta;\mc{T})$.
  Then the Fourier multiplier $T_m$ is bounded from $L^p(w;X)$ to $L^p(w;Y)$.
\end{thm}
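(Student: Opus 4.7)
The plan is to combine an atomic decomposition of $m$ on each dyadic block, the Littlewood--Paley inequality on $Y$, the $\el{2}{q'}$-boundedness of $\mc{T}$, and the operator-valued Littlewood--Paley--Rubio de Francia estimate on $X$ from \cite{ALV1}. First, for each $J \in \Delta$, the absolute convexity of $\mc{T}$ together with the bounded $s$-variation of $m$ on $J$ yields a decomposition
\begin{equation}
m\ind_J = \sum_{n} \lambda_n^J T_n^J \ind_{I_n^J},
\end{equation}
where the $I_n^J$ are pairwise disjoint subintervals of $J$, $T_n^J \in \mc{T}$, and the scalar coefficients satisfy $\big(\sum_n |\lambda_n^J|^s\big)^{1/s} \lesssim \|m\|_{V^s(\Delta;\mc{T})}$.

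Since $Y$ has $\UMD$ and $w \in A_{p/q} \subset A_p$, the one-sided dyadic Littlewood--Paley estimate for $Y$ gives
\begin{equation}
\|T_m f\|_{L^p(w;Y)} \lesssim \Big\|\Big(\sum_{J \in \Delta} |T_{m\ind_J} f|_Y^2\Big)^{1/2}\Big\|_{L^p(w)}.
\end{equation}
Applying Hölder's inequality in $n$ with exponents $s$ and $s'$, followed by the embedding $\ell^{q'}\hookrightarrow\ell^{s'}$ (valid since $s<q\leq 2\leq q'$, so $s'\geq q'$), yields the pointwise bound
\begin{equation}
|T_{m\ind_J} f|_Y \lesssim \|m\|_{V^s(\Delta;\mc{T})} \Big(\sum_n |T_n^J S_{I_n^J} f|_Y^{q'}\Big)^{1/q'}.
\end{equation}
Substituting and invoking the $\el{2}{q'}$-boundedness of $\mc{T}$ to peel off the operators $T_n^J$ reduces matters to bounding
\begin{equation}
\Big\|\Big(\sum_{J \in \Delta}\Big(\sum_n |S_{I_n^J} f|_X^{q'}\Big)^{2/q'}\Big)^{1/2}\Big\|_{L^p(w)}.
\end{equation}
Since the $\{I_n^J\}_{n,J}$ form a pairwise disjoint family of intervals, this mixed-norm square function is controlled by $\|f\|_{L^p(w;X)}$ via the Littlewood--Paley--Rubio de Francia-type estimate from \cite{ALV1}, which applies because $X^q$ has $\UMD$, $w\in A_{p/q}$, and $p>q$.

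The principal technical obstacle is producing the atomic decomposition with operator-valued atoms and scalar coefficients faithfully respecting the Minkowski norm on $\spn(\mc{T})$, while arranging the subintervals $I_n^J$ to be pairwise disjoint within $J$; this requires an operator-valued version of the Coifman--Rubio de Francia--Semmes decomposition of bounded $s$-variation functions. A secondary subtlety is that applying $\el{2}{q'}$-boundedness of $\mc{T}\subset\mc{L}_b(X,Y)$ pointwise inside the $L^p(w)$-norm requires interpreting the definition in the ambient Banach function spaces $L^p(w;X)$ and $L^p(w;Y)$, which is standard for such extensions. Finally, the precise mixed $\ell^2(\ell^{q'})$-form of the LPR estimate used in the last step should follow from the main results of \cite{ALV1}, possibly after combining a scale of LPR estimates over dyadic annuli with one over the finer intervals $I_n^J$.
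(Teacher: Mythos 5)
Your overall strategy — Littlewood--Paley on $Y$, an atomic decomposition of each restriction $m|_J$, a H\"older step, $\el{2}{q'}$-boundedness of $\mc{T}$, and then the Littlewood--Paley--Rubio de Francia estimate on $X$ — is the strategy the paper uses (Proposition~\ref{prop:main-mult} via Theorem~\ref{thm:LPRmod-lat}). But the decomposition you write down does not exist, and the gap is not merely a matter of finding an ``operator-valued CRS decomposition'': the scaling is wrong. You claim
\begin{equation}
  m\ind_J = \sum_n \lambda_n^J T_n^J \ind_{I_n^J}, \qquad \Bigl(\sum_n |\lambda_n^J|^s\Bigr)^{1/s}\lesssim \|m\|_{V^s(\Delta;\mc{T})},
\end{equation}
which would say precisely that $m|_J$ is (a multiple of) an $R^s$-atom, i.e.\ that $V^s\hookrightarrow R^s$. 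This is false already in the scalar case: the identity function $m(x)=x$ on $[0,1)$ has $[m]_{V^s}\leq 1$ for every $s$, but for any partition into $N$ intervals the natural coefficients have $\ell^s$-norm growing like $N^{1/s}$, so $\|m\|_{R^s}=\infty$. The correct statement, which the paper records in Lemma~\ref{lem:embeddingVHR}, is the strict embedding $V^s\hookrightarrow R^q$ for $q>s$, and it produces a genuinely two-level decomposition $m|_J=\sum_k \lambda_k a_k^J$ with $\lambda\in\ell^1$ and each atom $a_k^J=\sum_I c_I^{J,k}\ind_I$ satisfying $(\sum_I \|c_I^{J,k}\|_{\mc{T}}^q)^{1/q}\leq 1$. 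The hypothesis $s<q$ is exactly what makes this embedding available, and it cannot be upgraded to $s=q$.

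Once the decomposition is corrected, the estimate chain changes shape in a small but essential way: you no longer apply H\"older with exponents $s,s'$ to the coefficients $\lambda_n$ (there is nothing to apply it to, since $\lambda\in\ell^1$ and the triangle inequality is used to pull the $k$-sum outside). Instead, inside each atom one applies H\"older with exponents $q,q'$ to the inner sum over $I$, using the $\ell^q$-normalisation of the atom's coefficients, to produce the $\ell^{q'}$ expression $(\sum_I |c_I^{J,k}S_If / \|c_I^{J,k}\|_{\mc{T}}|^{q'})^{1/q'}$ needed for the $\el{2}{q'}$-boundedness step. Your observation that $s'>q'$ gives $\ell^{q'}\hookrightarrow\ell^{s'}$ is a correct fact but is not what is used, and it does not rescue the argument because the $\ell^s$-bound on the coefficients that would justify the $s,s'$ H\"older step simply is not there. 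With the $R^q$ decomposition and $q,q'$-H\"older in place, everything else you outline goes through as in Proposition~\ref{prop:main-mult}.

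A minor additional point: the absolute convexity of $\mc{T}$ is not what furnishes the decomposition; it is used so that the Minkowski gauge gives a genuine norm on $\spn(\mc{T})$ (ensuring the normalised operators $c_I^{J,k}/\|c_I^{J,k}\|_{\mc{T}}$ belong to $\mc{T}$) and so that one can take closures without leaving the $\el{r}{s}$-bounded class.
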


This is proven as part of Theorem \ref{thm:mult-s-var}.
The assumptions on $X$ imply Littlewood--Paley--Rubio de Francia-type estimates that are used in the proof.
In this theorem a condition called `$\el{2}{q^\prime}$-boundedness' appears where one would usually expect an $\mc{R}$-boundedness condition.
This is a new notion which arises naturally from the proof; it turns out to imply $\mc{R}$-boundedness.
We investigate the more general notion of $\el{r}{s}$-boundedness in Section \ref{sec:lrs}.

The case $q=2$ and $w=1$ of Theorem \ref{thm:main-multintro} was considered in \cite[Theorem 2.3]{HP06} for Banach spaces $X = Y$ with the $\LPR_{p}$ property.
Our approach only works for Banach function spaces (and closed subspaces thereof), but these are currently the only known examples of Banach spaces with $\LPR_{p}$.
As the parameter $q$ decreases we assume less of $X$, but more of $\mc{T}$ and $m$.
In Section \ref{sec:multiplier} we prove Theorem \ref{thm:main-multintro}, along with various other extensions and modifications of this result.
In particular we obtain the following improvement of Theorem \ref{thm:main-multintro} for Lebesgue spaces.

\begin{thm}\label{thm:introLr}
  Let $s\in [2, \infty)$.
  Suppose that $\map{m}{\RR}{\mc{L}(L^r(w))}$ for some $r\in (1, \infty)$ and all $w\in A_r(\RR^d)$, and that the following H\"older-type condition is satisfied:
  \begin{equation*}
    \sup_{x\in \RR}\|m(x)\|_{\mc{L}(L^{r}(w))}+ \sup_{J \in \Delta} |J|^{\frac1s} [m]_{C^{1/s}(J;\mc{L}_b(L^{r}(w)))} \leq \inc_r ([w]_{A_{r}}).
  \end{equation*}
Then the Fourier multiplier $T_m$ is bounded on $L^p(\RR;L^r(\RR^d))$ in each of the following cases:
\begin{enumerate}[(i)]
\item $r\in [2, \infty)$ and $\frac1s>\max\big\{\frac12-\frac1p,\frac12-\frac1r, \frac1p-\frac1r\}$,
\item $r\in (1, 2]$ and $\frac1s>\max\big\{ \frac1p-\frac12,\frac1r-\frac12, \frac1r-\frac1p\}$.
\end{enumerate}
\end{thm}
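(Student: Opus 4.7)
The plan is to deduce both cases from the paper's general multiplier theorem (Theorem \ref{thm:mult-s-var}, which extends Theorem \ref{thm:main-multintro}) combined with Rubio de Francia-type extrapolation for $\el{r}{s}$-bounded families (Section \ref{sec:lrs}). The first step is the observation that the H\"older hypothesis controls the dyadic $s$-variation: for any partition $x_0 < \cdots < x_N$ of $J \in \Delta$,
\begin{equation*}
\sum_k \nrm{m(x_k) - m(x_{k-1})}_{\mc{L}(L^r(w))}^s \leq [m]_{C^{1/s}(J)}^s \sum_k |x_k - x_{k-1}| \leq [m]_{C^{1/s}(J)}^s |J|,
\end{equation*}
so $[m]_{V^s(J)} \leq |J|^{1/s}[m]_{C^{1/s}(J)} \leq \inc_r([w]_{A_r})$, uniformly in $w \in A_r$. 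Hence $m \in V^s(\Delta; \mc{T})$ with $\mc{T}$ the absolutely convex hull of $\{m(x) : x \in \RR\}$.

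Next, the assumption that each $m(x)$ is bounded on $L^r(w)$ for every $w \in A_r$ with norm controlled by $\inc_r([w]_{A_r})$ is exactly the hypothesis of classical Rubio de Francia extrapolation. Iterating this in two $\ell$-directions, via the extrapolation machinery for $\el{r}{s}$-boundedness from Section \ref{sec:lrs}, shows that $\mc{T}$ is $\el{2}{q'}$-bounded as a family in $\mc{L}_b(L^\rho)$ for every $\rho, q \in (1, \infty)$. For case (i), $r \in [2, \infty)$, we now apply Theorem \ref{thm:mult-s-var} with $X = Y = L^\rho$ for an auxiliary exponent $\rho$ chosen large enough that $q \in (s, \rho)$ is feasible; the UMD of $X^q = L^{\rho/q}$ then holds automatically, and the $\el{2}{q'}$-boundedness derived above together with the $V^s$-regularity from the first step supply the remaining hypotheses. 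This yields $T_m$ bounded on some $L^{p_0}(w_0; L^\rho)$ with $p_0 > q$ and $w_0 \in A_{p_0/q}$. A final vector-valued Rubio de Francia extrapolation, again justified by the $A_r$-boundedness of $m(x)$, transfers the bound to $L^p(L^r)$, concluding case (i).

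Case (ii), $r \in (1, 2]$, follows by duality: $T_m$ is bounded on $L^p(L^r)$ if and only if the adjoint Fourier multiplier $T_{m^*}$, with symbol $\xi \mapsto m(-\xi)^*$, is bounded on $L^{p'}(L^{r'})$. The duality $A_r \leftrightarrow A_{r'}$ ensures that the adjoints $m(x)^*$ satisfy the hypotheses of case (i) at exponent $r' \geq 2$, and the constraint in (ii) is precisely that of (i) under the swap $(p,r) \leftrightarrow (p',r')$. The principal obstacle lies in the coordination of extrapolation steps in case (i): especially when $s \geq r$ one cannot take $\rho = r$ directly and must compose two successive extrapolations, and the region of $(p, r, s)$ for which this composition yields boundedness on $L^p(L^r)$ is precisely that carved out by the three max-inequalities on $1/s$ in the statement.
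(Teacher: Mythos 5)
The proposal has several genuine gaps, starting with the choice of $\mc{T}$. You take $\mc{T} = \operatorname{absco}\{m(x):x\in\RR\}$ and deduce $m\in V^s(\Delta;\mc{T})$ from the H\"older bound in operator norm. But $V^s(\Delta;\mc{T})$ is measured in the \emph{Minkowski norm} of $\mc{T}$, not the operator norm, and the Minkowski norm of $m(x)-m(y)$ with respect to $\operatorname{absco}(m(\RR))$ is only trivially bounded by $2$ (both $m(x),m(y)\in\mc{T}$), with nothing forcing it to be small when $|x-y|$ is. This is precisely the na\"ive choice the paper warns against just before Proposition \ref{prop:holder-lrs}: the correct $\mc{T}$ is the union over $J\in\Delta$ of $m(J)\cup\{\,|J|^{\alpha}\,(m(x)-m(y))/|x-y|^{\alpha}:x\neq y\in J\,\}$, i.e.\ one must adjoin the \emph{normalized difference quotients} so that the H\"older seminorm in the Minkowski norm is controlled by construction, and then appeal to the weight-uniform bounds plus Proposition \ref{prop:unifbd-ls} to recover $\el{u}{v}$-boundedness.

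A more structural problem is the invocation of Theorem \ref{thm:mult-s-var}: that theorem requires $q\in(1,2]$ and $s\in[1,q)$, so $s<2$. The statement you are proving lives in the regime $s\geq 2$, where Theorem \ref{thm:mult-s-var} simply does not apply for any choice of $q$ or $\rho$; you cannot pick $q\in(s,\rho)$ with $q\leq 2$ when $s\geq 2$. The paper instead proves the result via Example \ref{ex:Lr-large-s}, whose engine is the intermediate-space theorem (Theorem \ref{thm:mult-interp-edit}): $L^r$ is written as $[L^t,L^2]_\theta$, and the key extra ingredient is the Hilbert-space multiplier theorem (Theorem \ref{thm:HScase}, driven by Plancherel). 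Your argument contains no Hilbert-space/Plancherel input at all, which is what makes $s\geq 2$ reachable. Finally, the concluding ``vector-valued Rubio de Francia extrapolation'' step which is supposed to transfer from $L^{p_0}(w_0;L^\rho)$ to $L^p(L^r)$ cannot work: extrapolation in the $\RR$-variable changes the outer exponent $p$ and weight, not the inner target space $L^\rho$, and $T_m$ does not act pointwise in the $\RR^d$-variable so one also cannot extrapolate in the inner variable. The passage from an auxiliary $L^t$ to $L^r$ in the paper is accomplished by complex \emph{interpolation}, not extrapolation. The duality reduction of (ii) to (i) is fine and matches the paper, but the proof of (i) as written is unrepairable without the interpolation-with-a-Hilbert-space scheme.
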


Here $\inc_r([w]_{A_r})$ denotes an unspecified non-decreasing function of the Muckenhoupt characteristic $[w]_{A_r}$.
The result follows from the combination of Proposition \ref{prop:holder-lrs} and Example \ref{ex:Lr-large-s}.
The H\"older assumption allows for the construction of a suitable set $\mc{T}$ as in Theorem \ref{thm:main-multintro}.
The condition on $s$ becomes less restrictive as the numbers $p$, $r$, and $2$ get closer.
Taking $p=r$ or $r=2$ is particularly illustrative: the condition on $s$ is then $\frac{1}{s} > \big|\frac{1}{p} - \frac{1}{2}\big|$, as in the Coifman--Rubio de Francia--Semmes theorem.
However, even if $p=r$, the operator-valued nature of the symbol $m$ prevents us from simply deducing the boundedness of $T_m$ from the scalar-valued case by a Fubini argument.
Using the same techniques, one could also deduce versions of Theorem \ref{thm:introLr} with Muckenhoupt weights in the $\RR$- and $\RR^d$-variables.

In Section \ref{ssec:int-banach} we present some new Coifman--Rubio de Francia--Semmes-type theorems on $\UMD$ Banach spaces (not just Banach functon spaces) which are complex interpolation spaces between a Hilbert space and a $\UMD$ space.
Typical examples which are not Banach function spaces include the space of Schatten class operators, and more generally non-commutative $L^p$-spaces.
Our results in this context are weaker than those that we obtain for Banach function spaces, but nonetheless they seem to be new even for scalar multipliers.

\subsection*{Overview}
\begin{itemize}
\item In Section \ref{sec:prel} we present some preliminaries on Muckenhoupt weights, $\UMD$ Banach function spaces, and Rubio de Francia extrapolation.
\item In Section \ref{sec:lrs} the notion of $\el{r}{s}$-boundedness of a set of operators is defined and investigated.
\item In Section \ref{sec:VsRs} we discuss the class $V^s$ of functions of bounded $s$-variation, and a related atomic space $R^s$.
\item In Section \ref{sec:multiplier} we present our main results, which are several operator-valued Fourier multiplier theorems.
  We cover results for Hilbert spaces, $\UMD$ Banach function spaces, `intermediate' $\UMD$ Banach function spaces, and general `intermediate' $\UMD$ Banach spaces.
\end{itemize}

\subsection*{Notation}

Throughout the paper we consider complex Banach spaces, but everything works just as well for real Banach spaces.

If $\Omega$ is a measure space (we omit reference to the measure unless it is needed) and $X$ is a Banach space,
we let $L^0(\Omega;X)$ denote the vector space of measurable functions modulo almost-everywhere equality, and we let $\Sigma(\Omega;X)$ denote the vector space of all simple functions $f \colon \Omega \to X$.
When $X = \CC$ we write $L^0(\Omega)$ and $\Sigma(\Omega)$.

For vector spaces $V$ and $W$, $\mc{L}(V, W)$ denotes the vector space of linear operators from $V$ to $W$.
For Banach spaces $X$ and $Y$, $\mc{L}_b(X,Y)$ denotes the bounded linear operators from $X$ to $Y$ and $\|T\|_{\mc{L}(X,Y)}$ the operator norm.

Throughout the paper we write $\inc_{a,b,\ldots}$ to denote a non-decreasing function $[1,\infty) \to [1,\infty)$ which depends only on the parameters $a,b,\ldots$, and which may change from line to line.
Nondecreasing dependence on the Muckenhoupt characteristic of weights is used in applications of extrapolation theorems.
We do not obtain sharp dependence on Muckenhoupt characteristics in our results. In \cite[Appendix A]{ALV1} it is shown that monotone dependence on the Muckenhoupt characteristic can be deduced from a more general estimate in terms of the characteristic.

For $p,q \in [1,\infty]$ and $\theta \in [0,1]$, we define the interpolation exponent $[p,q]_\theta$ by
\begin{equation*}
  \frac{1}{[p,q]_\theta} = \frac{1-\theta}{p} + \frac{\theta}{q}
\end{equation*}
with the interpretation $1/0 := \infty$.
This lets us write interpolation results such as $[L^p,L^q]_\theta = L^{[p,q]_\theta}$ in a pleasing compact form.

Occasionally we will work with $\RR^d$ for a fixed dimension $d \geq 1$.
Implicit constants in estimates will depend on $d$, but we will not state this.

\section{Preliminaries\label{sec:prel}}

\subsection{Muckenhoupt weights\label{subs:muck}}
A locally integrable function $w \in L^1_{\loc}(\RR^d)$ is called a \emph{weight} if it is non-negative almost everywhere.
For $p \in [1,\infty)$ the space $L^p(w) = L^p(\RR^d,w)$ consists of all $f \in L^0(\RR^d)$ such that
\begin{equation*}
  \nrm{f}_{L^p(\RR^d,w)} := \has{\int_{\RR^d}\abs{f(x)}^p w(x) \dd x}^{1/p}< \infty.
\end{equation*}
The \emph{Muckenhoupt $A_p$ class} is the set of all weights $w$ such that
  \begin{equation*}
    [w]_{A_{p}} :=\sup_{B} \frac{1}{\abs{B}} \int_B w(x) \dd x \cdot \has{\frac{1}{\abs{B}} \int_B w(x)^{-\frac{1}{p-1}}}^{p-1} < \infty,
  \end{equation*}
  where the supremum is taken over all balls $B \subset \RR^d$, and where the second factor is replaced by $\nrm{w^{-1}}_{L^\infty(B)}$ when $p=1$. Define $A_\infty = \bigcup_{p \geq 1} A_p$.
  For $1 < p \leq q \leq \infty$ we say that a weight $w$ is in the \emph{$\alpha_{p,q}$ class} if $w^{1-p^\prime} \in A_{p^\prime/q^\prime}$, and we write
  \begin{equation*}
    [w]_{\alpha_{p,q}} := [w^{1-p^\prime}]_{A_{p^\prime/q'}}.
  \end{equation*}
  This class naturally arises in duality arguments.
  The $\alpha_{p,2}$ class is used in \cite{sK09}, where it is denoted by $\alpha_p$.

  We will need the following properties of the $A_p$ classes.
\begin{prop}\label{prop:muckenhoupt}\
\begin{enumerate}[(i)]
  \item \label{it:mw3} The $A_p$ classes are increasing in $p$, with $[w]_{A_q} \geq [w]_{A_p}$ when $1 \leq q \leq p$.
  \item \label{it:mw5} For all $w \in A_p$ with $p \in (1,\infty)$ there is an $\varepsilon>0$ such that $w \in A_{p-\varepsilon}$.
  \item \label{it:mw2} For all $w \in A_p$ with $p \in [1,\infty)$ there is a $\delta>0$ such that $w^{1+\delta} \in A_{p}$.
\end{enumerate}
\end{prop}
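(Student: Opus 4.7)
The plan is to observe that all three parts follow from two standard tools: Jensen's inequality (for monotonicity) and the reverse Hölder inequality for $A_\infty$ weights (for the two self-improvement statements).

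First I would handle (i). For $1 < q \leq p$, set $a = 1/(p-1)$ and $b = 1/(q-1)$ so that $0 < a \leq b$. Regarding $|B|^{-1} \dd x|_B$ as a probability measure on the ball $B$, the monotonicity of $L^r$-norms in $r$ gives
\begin{equation*}
  \has{\fint_B w^{-1/(p-1)}}^{p-1} = \nrm{w^{-1}}_{L^a(B)}^{1/a \cdot (p-1)} = \nrm{w^{-1}}_{L^a(B)} \leq \nrm{w^{-1}}_{L^b(B)} = \has{\fint_B w^{-1/(q-1)}}^{q-1},
\end{equation*}
and multiplying by $\fint_B w$ and taking the supremum yields $[w]_{A_p} \leq [w]_{A_q}$. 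The limiting case $q=1$ is handled separately by using $\fint_B w \leq [w]_{A_1} \operatorname{essinf}_B w$, which dominates $\fint_B w^{-1/(p-1)} \leq (\operatorname{essinf}_B w)^{-1/(p-1)}$.

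Next, for (iii), I would invoke the reverse Hölder inequality: for every $w \in A_p$ there exist $\delta_0>0$ and $C>0$, depending only on $[w]_{A_p}$, such that $\big(\fint_B w^{1+\delta_0}\big)^{1/(1+\delta_0)} \leq C \fint_B w$ uniformly in $B$. Applying this simultaneously to $w$ and to the dual weight $\sigma := w^{1-p'} = w^{-1/(p-1)}$ (which lies in $A_{p'}$, since $w \in A_p$ if and only if $\sigma \in A_{p'}$ with $[\sigma]_{A_{p'}} = [w]_{A_p}^{p'-1}$), one picks a common $\delta>0$ and then expands
\begin{equation*}
  \has{\fint_B w^{1+\delta}} \has{\fint_B w^{-(1+\delta)/(p-1)}}^{p-1} \lesssim \has{\fint_B w}^{1+\delta}\has{\fint_B \sigma}^{(p-1)(1+\delta)} = \bracs{\has{\fint_B w}\has{\fint_B \sigma}^{p-1}}^{1+\delta},
\end{equation*}
which is bounded by $[w]_{A_p}^{1+\delta}$. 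Taking the supremum over $B$ shows $w^{1+\delta} \in A_p$.

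For (ii), I would again use reverse Hölder on $\sigma$, choose $\delta>0$ accordingly, and set $q := 1 + (p-1)/(1+\delta) \in (1,p)$. This choice makes $1/(q-1) = (1+\delta)/(p-1)$, so $w^{-1/(q-1)} = \sigma^{1+\delta}$, and the reverse Hölder inequality for $\sigma$ then gives
\begin{equation*}
  \has{\fint_B w^{-1/(q-1)}}^{q-1} = \has{\fint_B \sigma^{1+\delta}}^{q-1} \lesssim \has{\fint_B \sigma}^{(1+\delta)(q-1)} = \has{\fint_B \sigma}^{p-1};
\end{equation*}
multiplying by $\fint_B w$ and taking the supremum bounds $[w]_{A_q}$ by a constant times $[w]_{A_p}$, so $w \in A_{p-\varepsilon}$ with $\varepsilon = p-q$.

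The genuinely nontrivial ingredient, and the main obstacle, is the reverse Hölder inequality itself. In a self-contained treatment this would require a Calderón--Zygmund stopping-time decomposition together with the $A_\infty$ characterization (good-$\lambda$ / level-set estimates) to extract the gain of an $L^{1+\delta}$ integral; in this paper it is standard enough that one would quote it from a reference such as \cite{HNVW16} rather than reprove it.
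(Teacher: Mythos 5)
Your proof is correct and follows the standard route — monotonicity of $L^r$-norms on a probability space for (i), and the reverse H\"older inequality applied to the weight and its dual for (ii) and (iii) — which is exactly the approach the paper delegates to \cite[Chapter 9]{lG09} rather than reproving. The only small gap is that your argument for (iii) implicitly assumes $p>1$: the dual weight $\sigma = w^{1-p'}$ and the exponent $-(1+\delta)/(p-1)$ are not defined at $p=1$, which the statement includes. That case is even simpler, requiring only reverse H\"older on $w$ itself together with the identity $\|(w^{1+\delta})^{-1}\|_{L^\infty(B)} = \|w^{-1}\|_{L^\infty(B)}^{1+\delta}$.
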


For proofs and further details on Muckenhoupt weights see \cite[Chapter 9]{lG09}.

\subsection{\texorpdfstring{The $\UMD$ property}{The UMD property}\label{subs:UMD}}
We say that a Banach space $X$ has the $\UMD$ property if the Hilbert transform extends to a bounded operator on $L^p(\RR;X)$ for all $p \in (1,\infty)$.
This is equivalent to the original definition in terms of martingale differences \cite{Burk83,Bour83}.
For a detailed account of the theory of $\UMD$ spaces we refer the reader to \cite{Burk01} and \cite{HNVW16}.
The ``classical'' reflexive spaces (i.e. the reflexive $L^p$ spaces, Sobolev spaces, Besov spaces, Triebel--Lizorkin spaces and Schatten classes) have the $\UMD$ property.
The $\UMD$ property implies reflexivity, so for example $L^1$ and $L^\infty$ do not have the $\UMD$ property.

Most of our results are stated in terms of Banach function spaces that are $p$-convex for some $p \in (1,\infty)$, and whose $p$-concavifications $X^p$ are also Banach function spaces, where $X^p=\{f: \abs{f}^{1/p} \in X\}$ with norm
\begin{equation*}
  \nrm{x}_{X^p} = \nrm{\abs{x}^{1/p}}^p_X.
\end{equation*}
For an introduction to these notions see \cite[Section 2.1]{ALV1}.
We write `$X^p \in \UMD$' as shorthand notation for `$X^p$ is a Banach space which has the $\UMD$ property'.
If $p \geq 1$ this therefore includes the assumption that $X$ is $p$-convex.
The condition that $X^p \in \UMD$ is open in $p$: in fact, if $X^p \in \UMD$, then there exists $\varepsilon > 0$ such that $X^q \in \UMD$ for all $0 < q < p+\varepsilon$ \cite[Theorem 4]{jR86}.
In particular, $X^p \in \UMD$ for some $p >1$ if and only if $X$ is $\UMD$.

\subsection{Extrapolation}
The following Rubio de Francia-type vector-valued extrapolation result was obtained by the authors in \cite[Theorem 3.2]{ALV1}.

\begin{thm}\label{thm:pair-extrap-p}
	Fix $p_0 \in (0,\infty)$ and let $X$ be a Banach function space over $(\Omega,\mu)$ with $X^{p_0}\in \UMD$.
Suppose that $\mc{F} \subset L^0_+(\RR^d;X) \times L^0_+(\RR^d;X)$ and that for all $p > p_0$, $(f,g) \in \mc{F}$, and $w \in A_{p/p_0}$ we have
  \begin{align*}
    \nrm{f(\cdot,\omega)}_{L^p(w)} &\leq \inc_{p,p_0} ([w]_{A_{p/p_0}}) \nrm{g(\cdot,\omega)}_{L^p(w)} \qquad \mu\text{-a.e. }\omega \in \Omega.
  \intertext{Then for all $p>p_0$, $(f,g) \in \mc{F}$, and $w \in A_{p/p_0}$ we have
  }
    \nrm{f}_{L^p(w;X)} &\leq \inc_{X,p,p_0} ( [w]_{A_{p/p_0}} ) \nrm{g}_{L^p(w;X)}.
  \end{align*}
\end{thm}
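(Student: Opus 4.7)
My plan is to combine a $p_0$-concavification trick with the classical vector-valued Rubio de Francia extrapolation for $\UMD$ Banach function spaces. Set $F := f^{p_0}$ and $G := g^{p_0}$, which are non-negative $X^{p_0}$-valued measurable functions on $\RR^d$. With $Y := X^{p_0}$ (a $\UMD$ Banach function space by hypothesis), $\tilde p := p/p_0 \in (1,\infty)$, and $\tilde w := w \in A_{\tilde p}$, the definition of the $p_0$-concavification yields $\nrm{F(x)}_Y = \nrm{f(x)}_X^{p_0}$, hence $\nrm{F}_{L^{\tilde p}(\tilde w;Y)} = \nrm{f}_{L^p(w;X)}^{p_0}$, and analogously for $G$. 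Raising the scalar hypothesis to the $p_0$-th power therefore transforms it into: for every $\tilde p \in (1,\infty)$ and every $\tilde w \in A_{\tilde p}$,
\[
  \nrm{F(\cdot,\omega)}_{L^{\tilde p}(\tilde w)} \leq \inc_{\tilde p}([\tilde w]_{A_{\tilde p}})\nrm{G(\cdot,\omega)}_{L^{\tilde p}(\tilde w)} \qquad \mu\text{-a.e.\ }\omega \in \Omega.
\]
The theorem thus reduces to the following standard claim: under this pointwise scalar hypothesis and the assumption that $Y$ is $\UMD$, the vector-valued estimate $\nrm{F}_{L^{\tilde p}(\tilde w;Y)} \leq \inc'([\tilde w]_{A_{\tilde p}}) \nrm{G}_{L^{\tilde p}(\tilde w;Y)}$ holds for every $\tilde p \in (1,\infty)$ and $\tilde w \in A_{\tilde p}$.

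Since the scalar hypothesis is valid throughout the open range $\tilde p \in (1,\infty)$, the classical scalar Rubio de Francia extrapolation theorem, applied $\mu$-a.e.\ in $\omega$, yields the equivalent $L^1$--$A_1$ form $\int F(\cdot,\omega) v\, dx \leq \inc([v]_{A_1}) \int G(\cdot,\omega) v\,dx$ for all $v \in A_1$. Because $Y$ (hence also $Y^*$) is $\UMD$, the space $L^{\tilde p}(\tilde w;Y)$ is reflexive and
\[
  \nrm{F}_{L^{\tilde p}(\tilde w;Y)} = \sup\bigg\{\int_{\RR^d}\!\!\int_\Omega F\Phi\,d\mu\,dx : \Phi \in L^{\tilde p'}(\tilde w^{1-\tilde p'};Y^*),\ \Phi \geq 0,\ \nrm{\Phi} \leq 1\bigg\}.
\]
The Bourgain--Rubio de Francia theorem provides the boundedness of the lattice Hardy--Littlewood maximal operator $\widetilde M$ on $L^{\tilde p'}(\tilde w^{1-\tilde p'};Y^*)$ with norm $N = N(Y^*,\tilde p',[\tilde w]_{A_{\tilde p}})$, so the Rubio de Francia iterate
\[
  V := \sum_{k=0}^\infty \frac{\widetilde M^k \Phi}{(2N)^k}
\]
majorizes $\Phi$, satisfies $\nrm{V}_{L^{\tilde p'}(\tilde w^{1-\tilde p'};Y^*)} \leq 2$, and obeys $\widetilde M V \leq 2N \cdot V$; since $\widetilde M$ acts in each $\omega$-slice as the scalar HL maximal operator, the last inequality says that $V(\cdot,\omega) \in A_1$ with characteristic $\leq 2N$ for $\mu$-a.e.\ $\omega$.

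Combining these ingredients via Fubini, the slicewise $L^1$--$A_1$ hypothesis applied with $v = V(\cdot,\omega)$, and then $Y$--$Y^*$ BFS duality followed by H\"older's inequality in $x$, gives
\[
  \int\!\!\int F\Phi\,d\mu\,dx \leq \int\!\!\int FV\,d\mu\,dx \leq \inc(2N)\int\!\!\int GV\,d\mu\,dx \leq 2\inc(2N)\,\nrm{G}_{L^{\tilde p}(\tilde w;Y)}.
\]
Taking the supremum over $\Phi$ and undoing the concavification produces the stated estimate with nondecreasing dependence on $[w]_{A_{p/p_0}}$.

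The main obstacle is the Bourgain--Rubio de Francia boundedness of the lattice Hardy--Littlewood maximal operator on weighted Bochner spaces valued in a $\UMD$ Banach function space and its dual; once this input is in hand, the concavification reduction, the scalar Rubio de Francia extrapolation, and the duality-plus-iteration scheme are all standard. A secondary, essentially bookkeeping, point is that the weights and constants produced by the slicewise application of scalar extrapolation depend measurably on $\omega$, which follows from inspecting its quantitative form together with the joint measurability of $V$.
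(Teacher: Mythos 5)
The paper does not prove this theorem; it imports it from [ALV1, Theorem 3.2], so there is no in-paper proof to compare against. Your proposal nevertheless follows exactly the strategy one expects for this result, and it is the one used in [ALV1]: reduce to $p_0=1$ by $p_0$-concavification, pass to the duality formulation of the $L^{\tilde p}(\tilde w;Y)$-norm using the unweighted pairing with $L^{\tilde p'}(\tilde w^{1-\tilde p'};Y^*)$, apply the Bourgain--Rubio de Francia weighted bounds for the lattice Hardy--Littlewood maximal operator $\widetilde M$ on the dual side (valid since $Y^* \in \UMD$ and $\tilde w^{1-\tilde p'}\in A_{\tilde p'}$), run the Rubio de Francia iteration to produce a majorant $V$ whose slices are $A_1$-weights with characteristic controlled by the operator norm of $\widetilde M$, and feed the resulting $A_1$-weights into the slicewise scalar hypothesis. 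The concavification bookkeeping ($\nrm{F(x)}_Y=\nrm{f(x)}_X^{p_0}$, $\tilde p=p/p_0$, etc.) is correct, and the final Fubini--duality--H\"older chain closes the argument.

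Two points deserve more than the sentence you allot them. First, you invoke an ``equivalent $L^1$--$A_1$ form'' of the scalar hypothesis, but the given hypothesis only covers $\tilde p\in(1,\infty)$, and the conclusion of the standard scalar extrapolation theorem (e.g.\ \cite[Theorem 3.9]{CMP11}) is itself stated only for $p\in(1,\infty)$. The downward step to the $L^1$--$A_1$ endpoint is true and quantitative, but it is not literally a citation; one has to run a Rubio de Francia--Jones argument (build a scalar iterate $Rg$ of $G(\cdot,\omega)$ with $A_1$ constant controlled by $[V(\cdot,\omega)]_{A_1}$ via Fefferman--Stein, factor $u=V(\cdot,\omega)(Rg)^{1-q}\in A_q$, apply the given $L^q$--$A_q$ bound with weight $u$, and undo H\"older). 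It would strengthen the writeup to carry this out rather than wave at it. Second, the measurability issue you raise at the end is a genuine one: the null sets in the hypothesis depend on the weight, and both $V(\cdot,\omega)$ and the auxiliary weight $u_\omega$ depend on $\omega$, so one cannot directly apply the a.e.-$\omega$ hypothesis with these $\omega$-dependent weights. Resolving this requires either a density/approximation argument reducing to a countable family of weights, or an observation that the hypothesis, once extrapolated, can be phrased with a universal null set. You acknowledge the issue, but it is the one place in the proof where the argument as written is not self-contained. Aside from these two points the proof is correct and matches the expected line of argument.
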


This theorem implies the following corollary for operators, which is also proved in \cite{ALV1}, where it is formulated more generally.
For the definition of the extension $\widetilde{T}$ see \cite[Lemma 2.4]{ALV1}.

\begin{thm}\label{thm:op-extrap}
Fix $p_0 \in (0,\infty)$, and let $T \in \mc{L}_b(L^p(w))$ for all $p > p_0$ and $w \in A_{p/p_0}$, with
\begin{equation*}
\nrm{T}_{\mc{L}(L^p(w))} \leq \inc_{p,p_0}([w]_{A_{p/p_0}}).
\end{equation*}
Then for all Banach function spaces $X$ with $X^{p_0}\in \UMD$, the operator $T$ has an extension $\widetilde{T}$ on $L^p(w;X)$ for all $p > p_0$ and $w \in A_{p/p_0}$, with
\begin{equation}\label{eqn:extn-norm-est}
  \| \widetilde{T} \|_{\mc{L}(L^p(w;X))} \leq \inc_{X,p,p_0}([w]_{A_{p/p_0}}).
\end{equation}
\end{thm}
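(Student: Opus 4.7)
The plan is to reduce the theorem directly to the scalar extrapolation theorem \ref{thm:pair-extrap-p} by slicing along the measure space underlying the Banach function space $X$. The key observation is that since $X$ is a Banach function space over some $(\Omega,\mu)$, an $X$-valued function on $\RR^d$ is (after a choice of representative) a function of two variables, and $\widetilde{T}$ acts slice-wise in the $\Omega$-variable.

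First, following \cite[Lemma 2.4]{ALV1}, I would define $\widetilde{T}$ on simple tensors $f = \sum_{k=1}^n x_k \otimes \ind_{E_k} \in \Sigma(\RR^d) \otimes X$ by the natural prescription
\begin{equation*}
  \widetilde{T} f := \sum_{k=1}^n x_k \otimes T\ind_{E_k},
\end{equation*}
viewed as an element of $L^0(\RR^d;X)$. Evaluating at a.e.\ $\omega \in \Omega$ and using linearity of $T$ gives the slicing identity
\begin{equation*}
  (\widetilde{T} f)(\cdot, \omega) = T\bigl(f(\cdot, \omega)\bigr) \quad \text{in } L^0(\RR^d),
\end{equation*}
since $f(\cdot,\omega) = \sum_k x_k(\omega) \ind_{E_k}$ is a scalar simple function on $\RR^d$.

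Second, apply the scalar hypothesis on $T$ slice-wise. For any $p > p_0$, $w \in A_{p/p_0}$, and $\mu$-a.e.\ $\omega$,
\begin{equation*}
  \nrm{(\widetilde{T} f)(\cdot,\omega)}_{L^p(w)} = \nrm{T(f(\cdot,\omega))}_{L^p(w)} \leq \inc_{p,p_0}([w]_{A_{p/p_0}}) \nrm{f(\cdot,\omega)}_{L^p(w)}.
\end{equation*}
Hence the family $\mc{F} := \{(|\widetilde{T} f|, |f|) : f \in \Sigma(\RR^d) \otimes X\} \subset L^0_+(\RR^d;X) \times L^0_+(\RR^d;X)$ satisfies the hypothesis of Theorem \ref{thm:pair-extrap-p}.

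Third, invoking Theorem \ref{thm:pair-extrap-p} yields the vector-valued bound
\begin{equation*}
  \nrm{\widetilde{T} f}_{L^p(w;X)} \leq \inc_{X,p,p_0}([w]_{A_{p/p_0}}) \nrm{f}_{L^p(w;X)}
\end{equation*}
for every $f \in \Sigma(\RR^d) \otimes X$, which is dense in $L^p(w;X)$; a standard continuity argument then produces the desired bounded extension \eqref{eqn:extn-norm-est} on all of $L^p(w;X)$. There is no real obstacle here: the only delicate point is the slicing identity above, and that is exactly what the extension lemma of \cite{ALV1} is designed to provide. Once that identity is in hand, the result is a one-line application of the scalar pair extrapolation theorem.
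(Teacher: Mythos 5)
Your proposal is correct and follows the same route as the paper: the paper states explicitly that Theorem~\ref{thm:op-extrap} is a corollary of the pair extrapolation Theorem~\ref{thm:pair-extrap-p}, applied to the family $\{(|\widetilde{T}f|,|f|)\}$ via the slice-wise identity $(\widetilde{T}f)(\cdot,\omega)=T(f(\cdot,\omega))$, with the construction and well-definedness of the tensor extension $\widetilde{T}$ on $\Sigma(\RR^d)\otimes X$ delegated to \cite[Lemma~2.4]{ALV1}. You reproduce exactly this reduction, including the correct density/continuity step at the end, so there is nothing to add.
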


We used these results in \cite{ALV1} to deduce Littlewood--Paley--Rubio de Francia-type estimates, and we use them here to prove $\el{r}{s}$-boundedness of families of operators.

\section{\texorpdfstring{$\el{r}{s}$}{lrs}-boundedness}\label{sec:lrs}

Our operator-valued multiplier theorems involve a new condition on sets of bounded operators $\mc{T} \subset \mc{L}_b(X,Y)$, which we call \emph{$\el{r}{s}$-boundedness}.
This generalises the more familiar notions of $\mc{R}$-boundedness and $\ell^s$-boundedness.
In this section we introduce and explore the concept.

\subsection{Definitions and basic properties}

\begin{dfn} Let $X$ and $Y$ be Banach spaces and $\mc{T} \subset \mc{L}_b(X,Y)$.
\begin{itemize}
  \item Let $(\varepsilon_k)_{k=1}^\infty$ be a Rademacher sequence on a probability space $\Omega$.
  We say that $\mc{T}$ is {\em $\mc{R}$-bounded} if for all finite sequences $(T_j)_{j=1}^n$ in $\mc{T}$ and $(x_j)_{j=1}^n$ in $X$,
  \begin{equation*}
    \nrms{\sum_{k=1}^n \varepsilon_k T_k x_k}_{L^2(\Omega;Y)} \lesssim \nrms{\sum_{k=1}^n \varepsilon_k  x_k}_{L^2(\Omega;X)}.
  \end{equation*}
  The least admissible implicit constant is called the $\mc{R}$-bound of $\mc{T}$, and denoted $\elRr{\mc{T}}$.

\item Suppose that $X$ and $Y$ are Banach function spaces and suppose $s \in [1,\infty]$.
  We say that $\mc{T}$ is {\em $\ell^s$-bounded} if for all finite sequences $(T_j)_{j=1}^n$ in $\mc{T}$ and $(x_j)_{j=1}^n$ in $X$,
  \begin{equation*}
    \nrms{\has{\sum_{k=1}^n \abs{T_k x_k}^s}^{1/s}}_Y \lesssim \nrms{\has{\sum_{k=1}^n \abs{ x_k}^s}^{1/s}}_X.
  \end{equation*}
  The least admissible implicit constant is called the $\ell^s$-bound of $\mc{T}$, and denoted $\elRone{s}{\mc{T}}$.

\end{itemize}
\end{dfn}

For a detailed treatment of $\mc{R}$-boundedness we refer the reader to \cite{HNVW2,KW04}, and for $\ell^s$-boundedness see \cite{KU14, Weis01a}.

\begin{dfn}\label{dfn:lrs}
  Let $X$ and $Y$ be Banach function spaces, $\mc{T} \subset \mc{L}_b(X,Y)$ and $r,s \in [1,\infty]$.
  We say that $\mc{T}$ is {\em $\el{r}{s}$-bounded} if for all finite doubly-indexed sequences $(T_{j,k})_{j,k=1}^{n,m}$ in $\mc{T}$ and $(x_{j,k})_{j,k=1}^{n,m}$ in $X$,
  \begin{equation*}
    \nrms{\has{\sum_{j=1}^n \has{\sum_{k=1}^m \abs{T_{j,k} x_{j,k}}^s}^{r/s}}^{1/r}}_Y \lesssim \nrms{\has{\sum_{j=1}^n \has{\sum_{k=1}^m \abs{x_{j,k}}^s}^{r/s}}^{1/r}}_X.
  \end{equation*}
  The least admissible implicit constant is called the $\el{r}{s}$-bound of $\mc{T}$, and denoted $\elR{r}{s}{\mc{T}}$.
\end{dfn}

For $\mc{R}$- and $\ell^2$-boundedness it suffices to consider \emph{subsets} of $\mc{T}$ in the defining inequality (see \cite{CPSW00, KVW16}).
For $\ell^s$- and $\el{r}{s}$-boundedness with $r,s \neq 2$ this is not the case: one must consider \emph{sequences}, allowing for repeated elements.
A singleton $\{T\}$ can fail to be $\ell^s$-bounded, as the defining estimate may fail for arbitrarily long constant sequences $(T,\ldots,T)$ (see \cite[Example 2.16]{KU14}).
We say that an operator $T \in \mc{L}_b(X,Y)$ is $\ell^s$- or $\el{r}{s}$-bounded if the singleton $\cbrace{T}$ is.

If a set $\mc{T} \subset \mc{L}_b(X,Y)$ is $\mc{R}$-, $\ell^s$-, or $\el{r}{s}$-bounded, then so is its closure in the strong operator topology, and likewise its absolutely convex hull $\operatorname{absco}(\mc{T})$.
This was proven in \cite{KW04} for $\mc{R}$-boundedness and \cite{KU14} for $\ell^s$-boundedness; the proof generalises to $\el{r}{s}$-boundedness.

It is immediate from the definition that $\ell^s$-boundedness and $\el{s}{s}$-boundedness are equivalent.
The following proposition encapsulates a few other connections between $\mc{R}$-, $\ell^r$-, and $\el{r}{s}$-boundedness.
For a thorough discussion on the connection between $\mc{R}$ and $\ell^2$-boundedness we refer to \cite{KVW16}.

\begin{prop}\label{prop:Rlslrsequiv}
  Let $X$ and $Y$ be Banach function spaces and $\mc{T} \subset \mc{L}_b(X,Y)$.
  \begin{enumerate}[(i)]
  \item \label{it:equiv1} If $Y$ is $p$-concave for some $p<\infty$ and $\mc{T}$ is $\mc{R}$-bounded, then $\mc{T}$ is $\ell^2$-bounded with $\elRone{2}{\mc{T}} \lesssim  \, \elRr{\mc{T}}$.
  \item \label{it:equiv2} If $X$ is $p$-concave for some $p<\infty$ and $\mc{T}$ is $\ell^2$-bounded, then $\mc{T}$ is $\mc{R}$-bounded with $\elRr{\mc{T}} \lesssim \, \elRone{2}{\mc{T}}$.
  \item \label{it:equiv3} Let $p,s \in [1,\infty]$. If $X$ is $p$-concave, $Y$ is $p$-convex, and $\mc{T}$ is $\ell^{s}$-bounded, then $\mc{T}$ is $\el{p}{s}$-bounded with $\elR{p}{s}{\mc{T}} \leq \, \elRone{s}{\mc{T}}$.
  \item \label{it:equiv4} Let $r,s \in [1,\infty]$. If $\mc{T}$ is $\el{r}{s}$-bounded, then $\mc{T}$ is $\ell^{r}$- and $\ell^{s}$-bounded with $\elRone{r}{\mc{T}} \leq \elR{r}{s}{\mc{T}}$ and $\elRone{s}{\mc{T}} \leq \elR{r}{s}{\mc{T}}$.
  \end{enumerate}
\end{prop}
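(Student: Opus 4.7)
The four claims can be addressed in order of increasing difficulty. Claim (iv) is tautological: taking $m=1$ in Definition \ref{dfn:lrs} collapses the inner $\ell^s$-expression to the single term $\abs{T_{j,1}x_{j,1}}$ and yields $\ell^r$-boundedness with $\elRone{r}{\mc{T}} \leq \elR{r}{s}{\mc{T}}$; symmetrically, taking $n=1$ gives $\ell^s$-boundedness.

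For claim (iii), I would chain the three hypotheses. For any finite doubly-indexed family $(T_{j,k})$ in $\mc{T}$ and $(x_{j,k})$ in $X$,
\begin{align*}
\nrms{\has{\sum_j \has{\sum_k \abs{T_{j,k} x_{j,k}}^s}^{p/s}}^{1/p}}_Y
&\leq \has{\sum_j \nrms{\has{\sum_k \abs{T_{j,k} x_{j,k}}^s}^{1/s}}_Y^p}^{1/p} \\
&\leq \elRone{s}{\mc{T}} \has{\sum_j \nrms{\has{\sum_k \abs{x_{j,k}}^s}^{1/s}}_X^p}^{1/p} \\
&\leq \elRone{s}{\mc{T}} \nrms{\has{\sum_j \has{\sum_k \abs{x_{j,k}}^s}^{p/s}}^{1/p}}_X,
\end{align*}
where the three steps use respectively the $p$-convexity of $Y$, the $\ell^s$-boundedness of $\mc{T}$ applied with $j$ held fixed, and the $p$-concavity of $X$. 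Under the standard normalization (used in Section \ref{subs:UMD}) in which the $p$-convex and $p$-concave constants equal $1$, this yields $\elR{p}{s}{\mc{T}} \leq \elRone{s}{\mc{T}}$.

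Claims (i) and (ii) both rely on the Khintchine--Maurey inequality: in a Banach function space $Z$ that is $p$-concave for some $p<\infty$, and hence has finite cotype,
\[
\nrms{\has{\sum_k \abs{z_k}^2}^{1/2}}_Z \sim_Z \nrms{\sum_k \varepsilon_k z_k}_{L^2(\Omega;Z)}.
\]
For (i), I would chain Khintchine--Maurey in $Y$ (valid by the $p$-concavity hypothesis on $Y$) with the defining inequality of $\mc{R}$-boundedness and the opposite direction of Khintchine--Maurey in $X$, arriving at $\elRone{2}{\mc{T}} \lesssim \elRr{\mc{T}}$. Claim (ii) is obtained by reversing this chain, the $p$-concavity now being transferred to $X$.

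The main obstacle is the Khintchine--Maurey equivalence itself, which is classical but nontrivial: one direction (passing from a Rademacher sum to an $\ell^2$-square function) follows readily from a pointwise application of Khintchine and Minkowski-type inequalities, while the other direction genuinely uses finite cotype. With Khintchine--Maurey in hand, all remaining computations are formal.
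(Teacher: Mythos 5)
Your proof is correct and follows essentially the same route as the paper: the paper also cites the Khintchine--Maurey inequalities for parts (i) and (ii), proves part (iii) with exactly the three-step chain you give (convexity in $Y$, $\ell^s$-boundedness one $j$ at a time, concavity in $X$), and dispatches part (iv) by taking one index to be a singleton. The only difference is cosmetic: you spell out the Khintchine--Maurey step and the normalization convention, which the paper leaves implicit.
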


\begin{proof}
  Statements \eqref{it:equiv1} and \eqref{it:equiv2} follow from the Khintchine-Maurey inequalities (see \cite[Theorem 1.d.6]{LT79}).
  For \eqref{it:equiv3}, consider doubly-indexed finite sequences $(T_{j,k})_{j,k=1}^{m,n}$ in $\mc{T}$ and $(x_{j,k})_{j,k=1}^{m,n}$ in $X$.
  Then we have
  \begin{align*}
    \nrms{\has{\sum_{j=1}^m\has{\sum_{k=1}^n \abs{T_{j,k} x_{j,k}}^s}^{p/s}}^{1/p}}_Y &\leq \has{\sum_{j=1}^m\nrms{\has{\sum_{k=1}^n \abs{T_{j,k} x_{j,k}}^s}^{1/s}}_X^p}^{1/p}\\
    &\leq \elRone{s}{\mc{T}} \has{\sum_{j=1}^m\nrms{\has{\sum_{k=1}^n \abs{ x_{j,k}}^s}^{1/s}}_Y^p}^{1/p} \\
    & \leq \elRone{s}{\mc{T}}\nrms{\has{\sum_{j=1}^m\has{\sum_{k=1}^n \abs{ x_{j,k}}^s}^{p/s}}^{1/p}}_X,
  \end{align*}
  so $\elR{p}{s}{\mc{T}} \leq \, \elRone{s}{\mc{T}}$.
  Finally, \eqref{it:equiv4} follows by taking one index to be a singleton.
\end{proof}

Proposition \ref{prop:Rlslrsequiv} shows in particular that if $\mc{T}$ is $\el{2}{s}$- or $\el{s}{2}$-bounded for some $s \in [1,\infty]$, then $\mc{T}$ is $\ell^2$-bounded, and hence $\mc{R}$-bounded if $Y$ is $p$-concave for some $p<\infty$.

Consider the situation of Theorem \ref{thm:op-extrap}.
If a family of linear operators $\mc{T}$ satisfies the hypothesis of the theorem uniformly, then the family of extensions $\wt{\mc{T}}$ is automatically $\el{r}{s}$-bounded for $r,s>p_0$.
This observation is a convenient source of $\el{r}{s}$-bounded families.

\begin{prop}\label{prop:unifbd-lsX}
  Fix $p_0\in (1, \infty)$, and suppose that $\mc{T} \subset \mc{L}_b(L^p(w))$ for all $p\in (p_0, \infty)$ and $w\in A_{p/p_0}$.
  In addition suppose that for each $T \in \mc{T}$ and $f\in L^p(w)$,
  \begin{align*}
    \nrm{Tf}_{L^{p}(w)} &\leq \inc_{p_0,p} ([w]_{A_{p/p_0}}) \nrm{f}_{L^{p}(w)}.
  \intertext{Let $X$ be a Banach function space with $X^{p_0}\in \UMD$, and let $\wt{\mc{T}}= \{\wt{T}:T\in \mc{T}\}$ be the set of extensions obtained in Theorem \ref{thm:op-extrap}.
  Then for all $p,r,s \in (p_0,\infty)$ and all $w \in A_{p/p_0}$, $\wt{\mc{T}}$ is $\el{r}{s}$-bounded on $L^p(w;X)$ and}
    \elR{r}{s}{\wt{\mc{T}}} &\leq \inc_{p_0,p,r,s,X} ( [w]_{A_{p/p_0}} ).
  \end{align*}
\end{prop}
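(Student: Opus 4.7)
The plan is to obtain the $\el{r}{s}$-bound on $\wt{\mc{T}}$ by two successive applications of the vector-valued extrapolation theorem (Theorem \ref{thm:pair-extrap-p}): the first reduces the desired $X$-valued inequality to a purely scalar mixed-norm estimate on $L^p(w)$, and the second proves that scalar estimate from the hypothesized $L^p(w)$-bounds on individual elements of $\mc{T}$, using the auxiliary Banach function space $\ell^r_n(\ell^s_m)$ in place of $X$.

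Fix finite doubly-indexed sequences $(T_{j,k})_{j,k=1}^{n,m}\subset \mc{T}$ and $(f_{j,k})_{j,k=1}^{n,m}\subset \Sigma(\RR^d;X)$. Viewing $X$ as a Banach function space over $(\Omega,\mu)$ and writing $f_{j,k}(y,\omega) := f_{j,k}(y)(\omega)$, the extensions supplied by Theorem \ref{thm:op-extrap} satisfy $(\wt{T}_{j,k} f_{j,k})(y,\omega) = T_{j,k}[f_{j,k}(\cdot,\omega)](y)$ for $\mu$-a.e.\ $\omega$; cf.\ \cite[Lemma 2.4]{ALV1}. Applying Theorem \ref{thm:pair-extrap-p} to the pair
\[
\left(\has{\sum_{j=1}^n \has{\sum_{k=1}^m |\wt{T}_{j,k} f_{j,k}|^s}^{r/s}}^{1/r},\ \has{\sum_{j=1}^n \has{\sum_{k=1}^m |f_{j,k}|^s}^{r/s}}^{1/r}\right)
\]
reduces the required $\el{r}{s}$-estimate on $L^p(w;X)$ to the scalar mixed-norm inequality
\begin{equation*}
\nrms{\has{\sum_{j=1}^n \has{\sum_{k=1}^m |T_{j,k} g_{j,k}|^s}^{r/s}}^{1/r}}_{L^p(w)} \leq \inc_{p_0,p,r,s}([w]_{A_{p/p_0}}) \nrms{\has{\sum_{j=1}^n \has{\sum_{k=1}^m |g_{j,k}|^s}^{r/s}}^{1/r}}_{L^p(w)} \qquad (\star)
\end{equation*}
for arbitrary scalar $(g_{j,k})\subset \Sigma(\RR^d)$, with constant uniform over $(T_{j,k})\subset \mc{T}$.

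To prove $(\star)$, apply Theorem \ref{thm:pair-extrap-p} a second time, now with $X$ replaced by the Banach function space $Y := \ell^r_n(\ell^s_m)$ over $\{1,\ldots,n\}\times\{1,\ldots,m\}$ equipped with counting measure, and with family consisting of pairs $((|T_{j,k} g_{j,k}|)_{j,k}, (|g_{j,k}|)_{j,k})$ as $(g_{j,k})$ ranges over scalar simple functions and $(T_{j,k})$ over $\mc{T}$. A direct computation from the definition of concavification shows $Y^{p_0} = \ell^{r/p_0}_n(\ell^{s/p_0}_m)$; since $r, s > p_0$ the two new exponents lie in $(1,\infty)$, so $Y^{p_0}$ has the $\UMD$ property, with constants independent of $n, m$ because the $\UMD$ property is preserved under iterated $L^q$-Bochner extensions with $q\in(1,\infty)$ (cf.\ \cite{HNVW16}). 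The pointwise-in-$\omega$ hypothesis of Theorem \ref{thm:pair-extrap-p} at a slice $\omega = (j_0,k_0)$ reads $\nrm{T_{j_0,k_0} g_{j_0,k_0}}_{L^p(w)} \leq \inc_{p_0,p}([w]_{A_{p/p_0}}) \nrm{g_{j_0,k_0}}_{L^p(w)}$, which is exactly the standing assumption on $\mc{T}$; the theorem's conclusion is then $(\star)$.

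The only genuinely non-trivial point is the identification of $Y^{p_0}$ with $\ell^{r/p_0}_n(\ell^{s/p_0}_m)$ together with the observation that its $\UMD$ constants are uniform in $n$ and $m$; once this is in hand the result is a routine double application of the pair-extrapolation theorem.
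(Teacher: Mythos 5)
Your proof is correct, and it hinges on exactly the same tool as the paper's own argument: Theorem \ref{thm:pair-extrap-p} applied with an auxiliary iterated sequence space whose $\UMD$ constants are uniform in the dimensions $n,m$. The difference is purely organizational. The paper makes a \emph{single} application of Theorem \ref{thm:pair-extrap-p}, taking the auxiliary space to be $Y = X(\ell^r_m(\ell^s_n))$ (over $\Omega\times\{1,\dots,m\}\times\{1,\dots,n\}$) and invoking $Y^{p_0}=X^{p_0}(\ell^{r/p_0}_m(\ell^{s/p_0}_n)) \in \UMD$ with constants independent of $m,n$. You factor this into \emph{two} applications: the second step establishes the scalar mixed-norm estimate $(\star)$ using $\ell^r_n(\ell^s_m)$ alone (this is precisely the content of Proposition \ref{prop:unifbd-ls}), and the first step lifts $(\star)$ to the $X$-valued inequality via extrapolation with $X$ itself. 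What your route buys is that the uniformity of $\UMD$ constants only needs to be verified for the purely scalar spaces $\ell^{r/p_0}_n(\ell^{s/p_0}_m)$, not for the $X^{p_0}$-valued ones; what you pay is an extra extrapolation pass. Also worth noting: the paper logically goes the other way around, deducing Proposition \ref{prop:unifbd-ls} \emph{from} Proposition \ref{prop:unifbd-lsX} by taking $X=\CC$, whereas you prove the scalar case first and bootstrap from it. Both directions are fine; yours makes the dependency structure a bit more transparent.

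One small point to make explicit in a polished write-up: when you apply Theorem \ref{thm:pair-extrap-p} in Step~2, the space $\ell^r_n(\ell^s_m)$ varies with $n,m$, so strictly speaking you apply the theorem once per $(n,m)$ and then observe that the resulting constants are uniform because they depend on $Y$ only through the $\UMD$ constant of $Y^{p_0}$, which is bounded independently of $n,m$. You say this, but the same caveat applies to the paper's version, and it is worth flagging since Theorem \ref{thm:pair-extrap-p} as stated fixes a single Banach function space.
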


\begin{proof}
  Consider doubly-indexed finite sequences $(T_{j,k})_{j,k=1}^{m,n}$ in $\mc{T}$ and $(g_{j,k})_{j,k=1}^{m,n}$ in $\Sigma(\RR^d;X)$.
  Let $\Omega$ be the underlying measure space of $X$, and define
  \begin{equation*}
    F,G \colon \RR^d\times \Omega \times \{1, \ldots, m\}\times \{1, \ldots, n\}\to \RR_+
  \end{equation*}
  by
  \begin{align*}
    F(\cdot, \omega,j,k) = \abs{T_{j,k} g_{j,k}(\cdot, \omega)} \quad \text{and} \quad G(\cdot, \omega, j, k) = \abs{g_{j,k}(\cdot, \omega)}.
  \end{align*}
  Then from the assumption on $\mc{T}$ we see that for all $p>p_0$ and all $w\in A_{p/p_0}$,\begin{equation*}
    \|F(\cdot, \omega,j,k)\|_{L^p(w)}\leq \inc_{p_0,p} ([w]_{A_{p/p_0}}) \|G(\cdot, \omega,j,k)\|_{L^p(w)}.
  \end{equation*}
  Letting $Y := X(\ell^r_m(\ell^s_n))$, it follows from \cite[p.\ 214]{jR86} that $Y^{p_0} = X^{p_0}(\ell^{r/p_0}_m(\ell^{s/p_0}_n))$ is $\UMD$, with $\UMD$ constants independent of $m,n\in \NN$.
  Hence Theorem \ref{thm:pair-extrap-p} implies that for all $p\in (p_0,\infty)$ and $w\in A_{p/p_0}$,
  \begin{equation*}
    \|F\|_{L^p(w;Y)}\leq \inc_{X,p_0,p,r,s} ([w]_{A_{p/p_0}}) \|G\|_{L^p(w;Y)}.
  \end{equation*}
  This, combined with \cite[Lemma 2.4]{ALV1}, implies the claimed result.
\end{proof}

Taking $X$ to be the scalar field $\CC$, so that $X^{p_0} = X$ for any $p_0$, we obtain the following special case. Note that in this case a more direct proof may be given as in \cite[Theorem 2.3]{GLV15}.

\begin{prop}\label{prop:unifbd-ls}
Fix $p_0\in (1, \infty)$, and suppose that $\mc{T} \subset \mc{L}_b(L^p(w))$ for all $p\in (p_0, \infty)$ and $w\in A_{p/p_0}$, and in addition suppose that for all $T \in \mc{T}$ and $f\in L^p(w)$,
  \begin{align*}
    \nrm{Tf}_{L^{p}(w)} &\leq \inc_{p_0,p} ([w]_{A_{p/p_0}}) \nrm{f}_{L^{p}(w)}.
  \intertext{Then for all $p,r,s \in (p_0,\infty)$ and all $w \in A_{p/p_0}$, $\mc{T}$ is $\el{r}{s}$-bounded on $L^p(w)$ and}
   	\elR{r}{s}{\mc{T}} &\leq \inc_{p_0,p,r,s} ( [w]_{A_{p/p_0}} ).
   \end{align*}
 \end{prop}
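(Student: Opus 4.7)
The plan is to obtain the statement as the special case $X = \CC$ of Proposition \ref{prop:unifbd-lsX}. The hypothesis $X^{p_0} \in \UMD$ is trivially satisfied since $\CC^{p_0} = \CC$ is a Hilbert space, and the extension $\wt T$ produced by Theorem \ref{thm:op-extrap} on $L^p(w;\CC) = L^p(w)$ coincides with $T$ itself. The conclusion of Proposition \ref{prop:unifbd-lsX} then yields $\elR{r}{s}{\mc T} \leq \inc_{p_0,p,r,s}([w]_{A_{p/p_0}})$ on $L^p(w)$, which is exactly what is claimed.

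For the promised ``more direct'' argument that bypasses the Banach-function-space extension machinery, I would instead invoke the scalar pair-extrapolation Theorem \ref{thm:pair-extrap-p} with the target Banach function space $\ell^r_m(\ell^s_n)$ over the counting measure on $\Omega = \{1,\dots,m\} \times \{1,\dots,n\}$. Fix finite sequences $(T_{j,k})_{j,k=1}^{m,n} \subset \mc T$ and $(f_{j,k})_{j,k=1}^{m,n} \subset \Sigma(\RR^d)$ and set
\begin{equation*}
F(x,j,k) = \abs{T_{j,k} f_{j,k}(x)}, \qquad G(x,j,k) = \abs{f_{j,k}(x)},
\end{equation*}
viewed as $\ell^r_m(\ell^s_n)$-valued functions of $x \in \RR^d$. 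The hypothesis on $\mc T$ gives, for each fixed $(j,k) \in \Omega$, $p > p_0$, and $w \in A_{p/p_0}$,
\begin{equation*}
\nrm{F(\cdot,j,k)}_{L^p(w)} \leq \inc_{p_0,p}([w]_{A_{p/p_0}})\, \nrm{G(\cdot,j,k)}_{L^p(w)},
\end{equation*}
which is exactly the pointwise-in-$\omega$ assumption of Theorem \ref{thm:pair-extrap-p}.

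The final step is to verify that $\bigl(\ell^r_m(\ell^s_n)\bigr)^{p_0} = \ell^{r/p_0}_m(\ell^{s/p_0}_n)$ has $\UMD$ uniformly in $m,n$: since $r,s > p_0$, both $r/p_0$ and $s/p_0$ lie in $(1,\infty)$, and iterated mixed-norm sequence spaces $\ell^{a}(\ell^{b})$ with $a,b \in (1,\infty)$ are $\UMD$ with constants depending only on $a$ and $b$ and not on the index ranges. Theorem \ref{thm:pair-extrap-p} therefore yields
\begin{equation*}
\nrm{F}_{L^p(w;\,\ell^r_m(\ell^s_n))} \leq \inc_{p_0,p,r,s}([w]_{A_{p/p_0}})\, \nrm{G}_{L^p(w;\,\ell^r_m(\ell^s_n))},
\end{equation*}
which unwinds to precisely the defining estimate for $\el{r}{s}$-boundedness of $\mc T$ on $L^p(w)$ from Definition \ref{dfn:lrs}. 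The only mildly delicate point, and not a real obstacle, is the uniformity of the $\UMD$ constants of $\ell^{r/p_0}_m(\ell^{s/p_0}_n)$ in $m,n$; this is a standard property of mixed-norm sequence spaces.
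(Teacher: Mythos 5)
Your proposal matches the paper's own proof: the paper derives Proposition \ref{prop:unifbd-ls} precisely by taking $X = \CC$ in Proposition \ref{prop:unifbd-lsX} (noting $\CC^{p_0} = \CC$ is trivially $\UMD$), which is your first paragraph. Your "more direct" alternative is also sound, but it is really just the specialization to $X = \CC$ of the argument used to prove Proposition \ref{prop:unifbd-lsX} itself (applying Theorem \ref{thm:pair-extrap-p} with target $\ell^r_m(\ell^s_n)$ and using the uniform-in-$m,n$ $\UMD$ constants of $\ell^{r/p_0}_m(\ell^{s/p_0}_n)$), so it does not constitute a genuinely different route.
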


Duality and interpolation may be used to establish $\el{r}{s}$-boundedness, as shown in the following two propositions.

\begin{prop}\label{prop:lrsdual}
  Let $X,Y$ be Banach function spaces, and let $\mc{T} \subset \mc{L}_b(X,Y)$. Let $r,s \in [1,\infty]$. If $\mc{T}$ is $\el{r}{s}$-bounded, then the adjoint family
  \begin{equation*}
    \mc{T}^* = \cbrace{T^*: T\in \mc{T}} \subset \mc{L}_b(Y^*,X^*)
  \end{equation*}
  is $\el{r'}{s'}$-bounded with $\elR{r'}{s'}{\mc{T^*}} = \elR{r}{s}{\mc{T}}$.
\end{prop}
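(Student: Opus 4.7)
The plan is to prove $\elR{r'}{s'}{\mc{T}^*} \leq \elR{r}{s}{\mc{T}}$ by duality with sign functions, and then obtain the reverse inequality by iterating the same argument on $\mc{T}^*$ and restricting back via the canonical embedding $X \hookrightarrow X^{**}$, together with $T^{**}|_X = T$. The key ingredient is the mixed-norm Hölder/duality identification $X(\ell^r(\ell^s))^* = X^*(\ell^{r'}(\ell^{s'}))$ for Banach function spaces, which lets us represent the $X^*(\ell^{r'}(\ell^{s'}))$-norm of a doubly-indexed sequence $(g_{j,k})$ as
\begin{equation*}
  \nrms{\has{\sum_{j,k}\abs{g_{j,k}}^{s'}}^{r'/s'}}^{1/r'}_{X^*} = \sup \cbraces{\sum_{j,k}\int \abs{f_{j,k}}\abs{g_{j,k}} : \nrms{\has{\sum_{j,k}\abs{f_{j,k}}^s}^{r/s}}^{1/r}_X \leq 1}.
\end{equation*}

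Fix finite doubly-indexed sequences $(T_{j,k})$ in $\mc{T}$ and $(y^*_{j,k})$ in $Y^*$ and let $A$ denote the $X^*(\ell^{r'}(\ell^{s'}))$-norm of $(T^*_{j,k}y^*_{j,k})$. Using the duality just described, choose (up to $\varepsilon$) a sequence $(x_{j,k})$ in $X$ with $\nrms{\has{\sum\abs{x_{j,k}}^s}^{r/s}}^{1/r}_X \leq 1$ such that $A$ is attained, modulo $\varepsilon$, by $\sum_{j,k}\int\abs{T^*_{j,k}y^*_{j,k}}\abs{x_{j,k}}$. The absolute values inside the pairing are unpacked by introducing sign functions $\sigma_{j,k} := \sgn(T^*_{j,k}y^*_{j,k})$ on the underlying measure space of $X$ and setting $\wt{x}_{j,k} := \sigma_{j,k}x_{j,k} \in X$; this preserves the pointwise modulus, so $\abs{\wt{x}_{j,k}} = \abs{x_{j,k}}$, and hence the $X(\ell^r(\ell^s))$-norm of $(\wt{x}_{j,k})$ is still at most $1$. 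Then
\begin{equation*}
  A - \varepsilon \leq \sum_{j,k} \ip{T^*_{j,k}y^*_{j,k},\wt{x}_{j,k}}_{X^*,X} = \sum_{j,k}\ip{y^*_{j,k},T_{j,k}\wt{x}_{j,k}}_{Y^*,Y}.
\end{equation*}

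Applying the mixed-norm Hölder inequality on the $Y$-side and then the $\el{r}{s}$-boundedness of $\mc{T}$ to the sequence $(T_{j,k}\wt{x}_{j,k})$ gives
\begin{equation*}
  A - \varepsilon \leq \nrms{\has{\sum\abs{y^*_{j,k}}^{s'}}^{r'/s'}}^{1/r'}_{Y^*} \cdot \elR{r}{s}{\mc{T}}\cdot\nrms{\has{\sum\abs{\wt{x}_{j,k}}^s}^{r/s}}^{1/r}_X \leq \elR{r}{s}{\mc{T}}\nrms{\has{\sum\abs{y^*_{j,k}}^{s'}}^{r'/s'}}^{1/r'}_{Y^*}.
\end{equation*}
Letting $\varepsilon \to 0$ proves $\elR{r'}{s'}{\mc{T}^*} \leq \elR{r}{s}{\mc{T}}$. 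Reapplying this to $\mc{T}^*$ and restricting the resulting estimate for $\mc{T}^{**}$ to $X \subset X^{**}$ yields the opposite inequality, giving equality.

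The main technical point I expect to need care with is the mixed-norm duality $X(\ell^r(\ell^s))^* \cong X^*(\ell^{r'}(\ell^{s'}))$, which relies on $X$ being a Banach function space (and on interpreting $X^*$ as the associate/Köthe dual when $X$ lacks order continuity); this is standard but should be cited, presumably from \cite{LT79} or the preliminaries of \cite{ALV1}. The sign-function manipulation itself is routine as long as we work with scalar-valued function spaces, and the fact that $\abs{\sigma_{j,k}x_{j,k}}=\abs{x_{j,k}}$ is exactly what decouples the absolute values in the duality pairing from the adjoint identity.
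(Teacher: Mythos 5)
Your proof is correct and it is essentially the same approach as the paper's: the paper dispatches the proposition with the single observation that the isometric duality $X(\ell^r_m(\ell^s_n))^* = X^*(\ell^{r'}_m(\ell^{s'}_n))$ holds (citing \cite[Section 1.d]{LT79}); what you have done is unpack that one-line argument, using the mixed-norm Köthe duality to express the $X^*(\ell^{r'}(\ell^{s'}))$-norm as a supremum over the unit ball of $X(\ell^r(\ell^s))$, and decoupling the absolute values with sign functions. That computation is exactly how one verifies the claimed norm equality from the isometry.

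Two small remarks. First, in the complex case you should take $\sigma_{j,k}$ to be the \emph{conjugate} sign of $T_{j,k}^*y_{j,k}^*$ so that $\sigma_{j,k}\cdot(T_{j,k}^*y_{j,k}^*) = |T_{j,k}^*y_{j,k}^*|$; this is purely notational. Second, the detour through $\mc{T}^{**}$ and $X^{**}$ for the reverse inequality is unnecessary and potentially delicate (one would have to argue that $X^{**}$ and $Y^{**}$ are again Banach function spaces and that the canonical embedding is a lattice isometry). The cleaner route is to observe that the estimate $\elR{r}{s}{\mc{T}} \le \elR{r'}{s'}{\mc{T}^*}$ follows directly by applying the same norming argument with the roles reversed: for $(x_{j,k})$ in $X$, compute $\nrm{(T_{j,k}x_{j,k})}_{Y(\ell^r(\ell^s))}$ as a supremum of $\abs{\sum_{j,k}\ip{T_{j,k}x_{j,k},g_{j,k}}}$ over $(g_{j,k})$ in the unit ball of $Y^*(\ell^{r'}(\ell^{s'}))$, move $T_{j,k}$ to the right using the adjoint, and apply Hölder followed by $\el{r'}{s'}$-boundedness of $\mc{T}^*$. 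Equivalently, once one notes that the block-diagonal operator $\wt{T}:(x_{j,k})\mapsto(T_{j,k}x_{j,k})$ on $X(\ell^r(\ell^s))$ has adjoint $\wt{T^*}$ on $X^*(\ell^{r'}(\ell^{s'}))$ under the isometric duality, the equality of $\el{r}{s}$- and $\el{r'}{s'}$-bounds is just the identity $\nrm{\wt{T}}=\nrm{(\wt{T})^*}$.
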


\begin{proof}
    This follows from the duality relation $X(\ell^r_m(\ell^s_n))^* = X^*(\ell^{r'}_m(\ell^{s'}_n))$ (see \cite[Section 1.d]{LT79}).
\end{proof}

To exploit interpolation we must assume order continuity, which holds automatically for reflexive spaces and thus in particular for $\UMD$ spaces (\cite[Section 2.4]{MeyNie}).

\begin{prop}\label{prop:lrsinterpolation}
  Let $X$ and $Y$ be order continuous Banach function spaces and $\mc{T} \subset \mc{L}_b(X,Y)$.
  Let $r_k,s_k \in [1,\infty]$ for $k=0,1$.
  If $\mc{T}$ is $\el{r_k}{s_k}$-bounded for $k=0,1$, then $\mc{T}$ is $\el{r_\theta}{s_\theta}$-bounded for all $\theta \in (0,1)$, where $r_\theta := [r_0,r_1]_\theta$ and $s_\theta := [s_0,s_1]_\theta$.
  Moreover we have the estimate
    \begin{equation*}
        \elR{r_\theta}{s_\theta}{\mc{T}}\leq \elR{r_0}{s_0}{\mc{T}}^\theta \elR{r_1}{s_1}{\mc{T}}^{1-\theta}\leq \max\{\elR{r_0}{s_0}{\mc{T}}, \elR{r_1}{s_1}{\mc{T}}\}.
    \end{equation*}
\end{prop}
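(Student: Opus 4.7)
My plan is to reformulate $\el{r}{s}$-boundedness of $\mc{T}$ as the boundedness of a single auxiliary operator on mixed-norm vector-valued Banach function spaces, and then invoke complex interpolation. Fix arbitrary finite doubly-indexed sequences $(T_{j,k})_{j,k=1}^{m,n}$ in $\mc{T}$, and consider the linear operator
\begin{equation*}
\mathbf{T} \colon (x_{j,k})_{j,k} \mapsto (T_{j,k}x_{j,k})_{j,k}.
\end{equation*}
By Definition \ref{dfn:lrs}, the hypothesis is precisely that for $k=0,1$ the map $\mathbf{T}$ is bounded from $X(\ell^{r_k}_m(\ell^{s_k}_n))$ into $Y(\ell^{r_k}_m(\ell^{s_k}_n))$ with operator norm at most $\elR{r_k}{s_k}{\mc{T}}$, and the desired conclusion is the analogous bound for the interpolated exponents, uniformly in $m,n$.

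Next, I would invoke two complex interpolation identities. The first is the standard formula for mixed norm sequence spaces, elementary in the finite-dimensional setting,
\begin{equation*}
[\ell^{r_0}_m(\ell^{s_0}_n), \ell^{r_1}_m(\ell^{s_1}_n)]_\theta = \ell^{r_\theta}_m(\ell^{s_\theta}_n),
\end{equation*}
and the second is the Calder\'on product identity for vector-valued Banach function spaces: for an order continuous Banach function space $Z$ and order continuous $E_0,E_1$,
\begin{equation*}
[Z(E_0), Z(E_1)]_\theta = Z([E_0,E_1]_\theta).
\end{equation*}
Since $X,Y$ are order continuous by hypothesis and each $\ell^{r_k}_m(\ell^{s_k}_n)$ is finite-dimensional (hence trivially order continuous), both identities apply with $Z=X$ and $Z=Y$. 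Applying the complex interpolation theorem to $\mathbf{T}$ then yields boundedness from $X(\ell^{r_\theta}_m(\ell^{s_\theta}_n))$ into $Y(\ell^{r_\theta}_m(\ell^{s_\theta}_n))$ with operator norm at most $\elR{r_0}{s_0}{\mc{T}}^{1-\theta}\elR{r_1}{s_1}{\mc{T}}^{\theta}$. The bounds obtained are manifestly independent of $m$ and $n$, so the definition of $\el{r_\theta}{s_\theta}$-boundedness is satisfied, and the trailing estimate $a^{1-\theta}b^{\theta}\leq \max\{a,b\}$ supplies the second inequality in the statement.

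The only nontrivial input is the Calder\'on product identity for vector-valued Banach function spaces, and I expect this to be the main obstacle to watch. Order continuity of $X$ and $Y$ is precisely what is required to make this identity work (failure of order continuity of the outer space is one of the standard pathologies for this identity). Since the inner sequence spaces are finite-dimensional, no regularity of these auxiliary spaces is at issue, so no further hypotheses on the $\el{r_k}{s_k}$-boundedness structure are needed. Once the identity is in hand, the argument is a direct two-step complex interpolation, and standard references such as Calder\'on's original paper or those cited in \cite{ALV1} cover precisely this setting.
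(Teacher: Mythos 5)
Your proof is correct and takes essentially the same approach as the paper, whose stated proof is a one-sentence reference to Calder\'on's theory of complex interpolation for order continuous vector-valued function spaces (\cite{aC64}); you supply precisely the details that the paper leaves to the reader. The reformulation via the auxiliary operator $\mathbf{T}$, the interpolation identity for the finite-dimensional inner spaces $\ell^{r_k}_m(\ell^{s_k}_n)$, and the Calder\'on identity $[X(E_0),X(E_1)]_\theta = X([E_0,E_1]_\theta)$ under order continuity are exactly the ingredients being invoked. One small but worthwhile observation: with the paper's convention $\frac{1}{[r_0,r_1]_\theta} = \frac{1-\theta}{r_0}+\frac{\theta}{r_1}$, the operator norm bound produced by complex interpolation is $\elR{r_0}{s_0}{\mc{T}}^{1-\theta}\elR{r_1}{s_1}{\mc{T}}^{\theta}$ as you write, not $\elR{r_0}{s_0}{\mc{T}}^{\theta}\elR{r_1}{s_1}{\mc{T}}^{1-\theta}$ as stated in the proposition, so you have in fact corrected a minor exponent transposition in the paper; the final inequality against $\max\{\elR{r_0}{s_0}{\mc{T}},\elR{r_1}{s_1}{\mc{T}}\}$ holds either way.
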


\begin{proof}
  This follows from Calder\'on's theory of complex interpolation for order continuous vector-valued function spaces \cite{aC64}.
\end{proof}

Combining Proposition \ref{prop:Rlslrsequiv}\eqref{it:equiv4}  with Proposition \ref{prop:lrsinterpolation} we deduce the following.

\begin{cor}\label{cor:lrstriangle}
  Let $X$ and $Y$ be order continuous Banach function spaces and $\mc{T} \subset \mc{L}_b(X,Y)$.
  Fix $r,s \in [1,\infty]$ and suppose
  that $\mc{T}$ is $\el{r}{s}$-bounded.
  If
  \begin{equation*}
    r \leq u \leq v \leq s \quad \text{or} \quad  s \leq v \leq u \leq r ,
  \end{equation*}	
  then $\mc{T}$ is $\el{u}{v}$-bounded with $\elR{u}{v}{\mc{T}} \leq \elR{r}{s}{\mc{T}}$.
\end{cor}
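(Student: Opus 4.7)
The strategy is to combine the ``diagonal'' consequences of $\el{r}{s}$-boundedness, namely $\el{r}{r}$- and $\el{s}{s}$-boundedness, with two successive applications of complex interpolation to sweep out the full range of permitted $(u,v)$. In the reciprocal-exponent plane, the three pairs $(r,s)$, $(r,r)$, $(s,s)$ form the vertices of a triangle whose interior coincides with the set of admissible $(u,v)$ in each of the two cases, and the plan is essentially to fill in this triangle.

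Concretely, for the case $r\le u\le v\le s$, I would first apply Proposition \ref{prop:Rlslrsequiv}\eqref{it:equiv4} to the hypothesised $\el{r}{s}$-bounded family $\mc{T}$ to obtain that $\mc{T}$ is both $\ell^{r}$- and $\ell^{s}$-bounded, equivalently $\el{r}{r}$- and $\el{s}{s}$-bounded, with both constants controlled by $\elR{r}{s}{\mc{T}}$. Next, I would invoke Proposition \ref{prop:lrsinterpolation} twice. Interpolating $\el{r}{s}$- with $\el{s}{s}$-boundedness keeps the second exponent fixed at $s$ and sweeps the first over $[r,s]$, producing $\el{u}{s}$-boundedness for each $u\in[r,s]$; independently, interpolating $\el{r}{r}$- with $\el{s}{s}$-boundedness yields $\el{u}{u}$-boundedness for each $u\in[r,s]$. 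Now fix $u\in[r,s]$ and apply Proposition \ref{prop:lrsinterpolation} a second time, this time between the newly-established $\el{u}{u}$- and $\el{u}{s}$-boundedness: the first exponent remains $u$ throughout, while the second traces $[u,s]$, yielding $\el{u}{v}$-boundedness for every $v\in[u,s]$, which is precisely the required range.

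The case $s\le v\le u\le r$ is handled symmetrically: interpolate $\el{r}{s}$ with $\el{r}{r}$ to get $\el{r}{v}$ for $v\in[s,r]$, interpolate $\el{r}{r}$ with $\el{s}{s}$ to get the diagonal $\el{v}{v}$, and finally interpolate these two (with the second exponent fixed at $v$) to reach $\el{u}{v}$ for $u\in[v,r]$. The norm inequality $\elR{u}{v}{\mc{T}}\le\elR{r}{s}{\mc{T}}$ falls out automatically: Proposition \ref{prop:Rlslrsequiv}\eqref{it:equiv4} shows that the three ``vertex'' bounds are each at most $\elR{r}{s}{\mc{T}}$, and the $\max$-estimate in Proposition \ref{prop:lrsinterpolation} propagates this bound unchanged through both interpolation steps. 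There is no real obstacle here; the entire argument is a convexity observation in the reciprocal-exponent plane, and the only item requiring a second of care is the bookkeeping verifying that the second interpolation leaves one of the exponents genuinely fixed, which holds because both inputs already share that exponent.
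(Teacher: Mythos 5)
Your proposal is correct and is essentially the proof the paper intends: the paper's proof consists precisely of the one-line remark that the corollary follows by combining Proposition \ref{prop:Rlslrsequiv}\eqref{it:equiv4} with Proposition \ref{prop:lrsinterpolation}, and your two-step interpolation (first to an edge of the reciprocal-exponent triangle, then across it) correctly fills in the bookkeeping that the paper leaves implicit, including the propagation of the bound via the max-estimate.
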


To end this section we present a technical lemma on the $\el{r}{s}$-boundedness of the closure of a family of operators on spaces other than that in which the closure was taken.
It is used in our multiplier result for intermediate spaces, where several Lebesgue spaces are used simultaneously.
A similar result can be proved with general order continuous Banach function spaces in place of Lebesgue spaces.

\begin{lem}\label{lem:l2-bdd}
  Let $(\Omega, \rho, \mu)$ be a metric measure space, and assume $\mu$ is finite on bounded sets.
  Let $p\in (1, \infty)$ and $\mc{T}\subset \mc{L}(\Sigma(\Omega), L^0(\Omega))$ be such that $\mc{T}\subset \calL(L^p(\Omega))$ is uniformly bounded and absolutely convex.
  Let $\overline{\mc{T}}$ denote the closure of $\mc{T}$ in $\calL(L^p(\Omega))$.
  Suppose $q \in (1,\infty)$, and let $w$ be a weight on $\Omega$ which is integrable on bounded sets.
  Suppose also that $\mc{T}\subset \calL(L^q(w))$ is $\ell^r(\ell^s)$-bounded for some $r,s\in [1, \infty]$.
  Then $\overline{\mc{T}}$ is $\ell^r(\ell^s)$-bounded on $L^q(w)$ with $\elR{r}{s}{\overline{\mc{T}}} = \elR{r}{s}{\mc{T}}$.
\end{lem}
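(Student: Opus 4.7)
The plan is to transfer the $\ell^r(\ell^s)$-boundedness from $\mc{T}$ to $\overline{\mc{T}}$ by an approximation argument on the subspace $\Sigma_b(\Omega)$ of simple functions with bounded support. Under the hypotheses on $\mu$ and $w$, every element of $\Sigma_b(\Omega)$ lies in $L^p(\Omega) \cap L^q(w)$, and $\Sigma_b(\Omega)$ is dense in each (since $p,q<\infty$). The inclusion $\mc{T} \subset \overline{\mc{T}}$ gives the trivial direction $\elR{r}{s}{\mc{T}} \leq \elR{r}{s}{\overline{\mc{T}}}$, so it remains to prove the reverse; along the way we have to make sense of $\overline{\mc{T}}$ as a subset of $\calL(L^q(w))$.

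First I would show that each $T \in \overline{\mc{T}}$ determines a bounded operator on $L^q(w)$. Given $T_n \in \mc{T}$ converging to $T$ in $\calL(L^p(\Omega))$ and $f \in \Sigma_b(\Omega)$, we have $T_n f \to T f$ in $L^p(\Omega)$, hence along a subsequence $\mu$-almost everywhere. Applying the $\ell^r(\ell^s)$-bound of $\mc{T}$ to singleton sequences gives $\nrm{T_n f}_{L^q(w)} \leq \elR{r}{s}{\mc{T}} \nrm{f}_{L^q(w)}$, and Fatou's lemma for the $L^q(w)$-norm along the almost-everywhere convergent subsequence yields $Tf \in L^q(w)$ with the same bound. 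Density of $\Sigma_b(\Omega)$ in $L^q(w)$ then extends $T|_{\Sigma_b(\Omega)}$ uniquely to an operator $\wt{T} \in \calL(L^q(w))$ of norm at most $\elR{r}{s}{\mc{T}}$; uniqueness guarantees that $\wt{T}$ is independent of the approximating sequence.

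For the full inequality I would take finite doubly-indexed families $(T^{(j,k)}) \subset \overline{\mc{T}}$ and $(x^{(j,k)}) \subset L^q(w)$. Density of $\Sigma_b(\Omega)$ in $L^q(w)$ reduces the estimate to the case $x^{(j,k)} \in \Sigma_b(\Omega)$, so that $\wt{T}^{(j,k)} x^{(j,k)}$ coincides with the $L^p(\Omega)$-value $T^{(j,k)} x^{(j,k)}$. For each pair $(j,k)$ I would pick $T^{(j,k)}_n \in \mc{T}$ with $T^{(j,k)}_n \to T^{(j,k)}$ in $\calL(L^p(\Omega))$; since the index set is finite, a single diagonal subsequence achieves $T^{(j,k)}_n x^{(j,k)} \to T^{(j,k)} x^{(j,k)}$ in $L^p(\Omega)$, and hence $\mu$-almost everywhere, simultaneously for all $(j,k)$. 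The map $(y^{(j,k)}) \mapsto \bigl(\sum_j (\sum_k \abs{y^{(j,k)}}^s)^{r/s}\bigr)^{1/r}$ is continuous (with the usual supremum interpretation when $r$ or $s$ equals $\infty$), so a pointwise application of this map to the almost-everywhere converging $T^{(j,k)}_n x^{(j,k)}$, followed by Fatou's lemma for the $L^q(w)$-norm and the $\ell^r(\ell^s)$-bound of $\mc{T}$, produces the desired inequality with constant $\elR{r}{s}{\mc{T}}$.

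The main care is in the consistency between the two topologies: because $\Sigma_b(\Omega)$ lies inside $L^p(\Omega) \cap L^q(w)$ and $\wt{T}$ is built to agree with $T$ on this common dense subspace, there is no ambiguity between the $L^p(\Omega)$-limit $T^{(j,k)} x^{(j,k)}$ and the $L^q(w)$-action $\wt{T}^{(j,k)} x^{(j,k)}$, so the approximation argument goes through cleanly. Absolute convexity and the uniform $L^p$-boundedness of $\mc{T}$ do not appear to enter explicitly; the closure structure together with the $\ell^r(\ell^s)$-boundedness of $\mc{T}$ on $L^q(w)$ supply everything the argument requires.
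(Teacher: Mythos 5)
Your proposal is correct and follows essentially the same approach as the paper's proof: restrict to bounded functions of bounded support (so they lie in $L^p(\Omega)\cap L^q(w)$), approximate each $T\in\overline{\mc{T}}$ by a sequence in $\mc{T}$ converging in $\calL(L^p(\Omega))$, pass to a subsequence achieving $\mu$-a.e.\ convergence of the finitely many $T^{(j,k)}_n x^{(j,k)}$ simultaneously, and invoke Fatou's lemma together with the $\ell^r(\ell^s)$-bound of $\mc{T}$. You are slightly more explicit than the paper in two places, both to good effect: in verifying that each $T\in\overline{\mc{T}}$ induces a well-defined element of $\calL(L^q(w))$ (via the singleton estimate and density), and in noting that the pointwise map implementing the $\ell^r(\ell^s)$-norm is continuous for finite index sets so that the a.e.\ convergence passes through it. Your concluding observation is also accurate: uniform boundedness and absolute convexity of $\mc{T}$ on $L^p(\Omega)$ do not enter the proof of the estimate itself (the paper needs them later when applying the lemma so that $\overline{\mc{T}}$ is a Banach disc in Theorem~\ref{thm:mult-interp-edit}).
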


Note that we take the closure $\overline{\mc{T}}$ of $\mc{T}$ in one space, and then establish $\ell^r(\ell^s)$-boundedness of $\overline{\mc{T}}$ considered as a set of operators on a different space.

\begin{proof}
  Fix $(T_{m,n})_{m=1,n=1}^{M,N}$ in $\overline{\wt{T}}$ and $(f_{m,n})_{m=1,n=1}^{M,N}$ in $L^q(w)$.
  By a density argument we may assume each for each $m,n$ that $f_{m,n}$ is bounded and supported on a bounded subset of $\Omega$, which implies $f_{m,n} \in L^p(\Omega)$.
  For each $m,n$ choose $(T_{m,n}^{(k)})_{k\geq 1}$ in $\mc{T}$ such that $T_{m,n}^{(k)}\to T_{m,n}$ in $\calL(L^p(\Omega))$.
  Then also $T_{m,n}^{(k)}f_{m,n}\to T_{m,n} f_{m,n}$ in $L^p(\Omega)$.
  By passing to subsequences we may suppose that for all $m,n$ we have $T_{m,n}^{(k)} f_{m,n}\to T_{m,n} f_{m,n}$, $\mu$-a.e.
  Therefore, by Fatou's lemma,
\begin{align*}
\nrms{\has{\sum_{m=1}^M \has{\sum_{n=1}^N \abs{T_{m,n} f_{m,n}}^s}^{\frac{r}{s}}}^{\frac{1}{r}}}_{L^q(w)}& \leq \liminf_{k\to \infty} \nrms{\has{\sum_{m=1}^M \has{\sum_{n=1}^N \abs{T_{m,n}^{(k)} f_{m,n}}^s}^{\frac{r}{s}}}^{\frac{1}{r}}}_{L^q(w)}  \\ & \leq \elR{r}{s}{\mc{T}} \nrms{\has{\sum_{m=1}^M \has{\sum_{n=1}^N \abs{f_{m,n}}^s}^{\frac{r}{s}}}^{\frac{1}{r}}}_{L^q(w)},
\end{align*}
with the appropriate adjustment if $r=\infty$ or $s=\infty$. So $\overline{\mc{T}}$ is indeed $\ell^r(\ell^s)$-bounded on $L^q(w)$.
\end{proof}

\subsection{\texorpdfstring{$\el{r}{s}$}{lrs}-boundedness of single operators}
As noted before, a single operator $T \in \mc{L}_b(X,Y)$ can fail to be $\el{r}{s}$-bounded.
For positive operators we have the following result, which is an adaptation of \cite[Lemma 4]{sM96}.

\begin{prop}
  Let $X$ and $Y$ be Banach function spaces and let $P \in \mc{L}_b(X,Y)$ be a positive operator.
  Then $P$ is $\el{r}{s}$-bounded for all $r,s \in [1,\infty]$, and we have the $\el{r}{s}$-bound $\elR{r}{s}{\cbrace{P}} \leq \nrm{P}_{\mc{L}(X.Y)}$.
\end{prop}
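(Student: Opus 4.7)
My plan is to reduce the claim to a pointwise lattice inequality together with two applications of the $\ell^s$-valued and $\ell^r$-valued ``Minkowski for positive operators'' inequality. The central observation is that if $P \in \mc{L}_b(X,Y)$ is positive, then $|Pf| \leq P|f|$ pointwise $\mu_Y$-a.e. for every $f \in X$, and moreover for any $t \in [1,\infty]$ and any finite sequence $(f_k)_{k=1}^n$ in $X$ one has the pointwise estimate
\begin{equation*}
  \has{\sum_{k=1}^n (P|f_k|)^t}^{1/t} \leq P \has{\sum_{k=1}^n |f_k|^t}^{1/t},
\end{equation*}
with the standard interpretation as $\max_k P|f_k| \leq P \max_k |f_k|$ when $t=\infty$.

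To establish this inner inequality (for $t<\infty$), I would argue by duality in $\ell^t$: for any nonnegative scalar sequence $(a_k)$ with $\sum_k a_k^{t'} \leq 1$ and any $\omega$ in the base space of $X$, H\"older's inequality gives $\sum_k a_k |f_k(\omega)| \leq \bigl(\sum_k |f_k(\omega)|^t\bigr)^{1/t}$. Applying the positive operator $P$ and using linearity yields
\begin{equation*}
  \sum_{k=1}^n a_k\, P|f_k| = P\has{\sum_{k=1}^n a_k|f_k|} \leq P\has{\sum_{k=1}^n |f_k|^t}^{1/t}
\end{equation*}
$\mu_Y$-a.e. Taking the supremum over a countable dense set of such $(a_k)$ in the unit ball of $\ell^{t'}_+$ recovers the $\ell^t$-norm on the left and preserves the a.e.\ inequality. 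The case $t=\infty$ is immediate from $|f_k| \leq \max_l |f_l|$ and positivity of $P$.

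With this inequality in hand, set $y_j := \bigl(\sum_k |x_{j,k}|^s\bigr)^{1/s} \in X$. Using $|Px_{j,k}| \leq P|x_{j,k}|$ and then the inner inequality with $t=s$, for each $j$
\begin{equation*}
  \has{\sum_{k=1}^n |Px_{j,k}|^s}^{1/s} \leq \has{\sum_{k=1}^n (P|x_{j,k}|)^s}^{1/s} \leq P y_j.
\end{equation*}
Applying the inner inequality a second time with $t=r$ to the sequence $(y_j)$, then taking $Y$-norms and using the boundedness of $P$,
\begin{equation*}
  \nrms{\has{\sum_{j=1}^m\has{\sum_{k=1}^n |Px_{j,k}|^s}^{r/s}}^{1/r}}_Y \leq \nrms{P\has{\sum_{j=1}^m y_j^r}^{1/r}}_Y \leq \nrm{P}_{\mc{L}(X,Y)} \nrms{\has{\sum_{j=1}^m y_j^r}^{1/r}}_X,
\end{equation*}
which is exactly the $\el{r}{s}$-boundedness estimate with constant $\nrm{P}_{\mc{L}(X,Y)}$. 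The only delicate step is the pointwise duality argument; after that, everything is a direct lattice manipulation, and no issue of order continuity or separability arises because the sequences involved are finite.
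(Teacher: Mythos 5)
Your proof is correct and rests on the same underlying mechanism as the paper's: express a pointwise $\ell^t$-norm as a supremum over the dual unit ball, use linearity and positivity of $P$ to pull $P$ through the finite sum and the supremum, and then recover the $\ell^t$-norm on the outside. You package this as a reusable lattice Minkowski inequality $\bigl(\sum_k (P|f_k|)^t\bigr)^{1/t} \leq P\bigl(\sum_k |f_k|^t\bigr)^{1/t}$ applied twice ($t=s$ then $t=r$), whereas the paper carries out the double duality in $\ell^{r'}_m$ and $\ell^{s'}_n$ in a single display after reducing to nonnegative $x_{j,k}$. The staged version is a bit more explicit about where positivity and the countable-dense-set argument enter, but it is the same argument.
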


\begin{proof}
Let $(x_{j,k})_{j,k=1}^{m,n}$ be a doubly-indexed sequence in $X$, and note that by positivity of $P$ we may take the elements of the sequence to be positive.
By positivity of $P$ we can estimate
\begin{align*}
    \nrms{\has{\sum_{j=1}^m \has{\sum_{k=1}^n \abs{P x_{j,k}}^s}^{r/s}}^{1/r}}_Y
    &= \nrms{\sup_{\nrm{\ha{b_j}}_{\ell^{r'}_m} \leq 1 } \sum_{j=1}^m b_j  \sup_{\nrm{\ha{a_k^j}}_{\ell^{s'}_n} \leq1 } \sum_{k=1}^n a_k^j Px_{j,k}}_Y\\
    &\leq \nrms{P\has{\sup_{\nrm{\ha{b_j}}_{\ell^{r'}_m} \leq 1 } \sum_{j=1}^m b_j  \sup_{\nrm{\ha{a_k^j}}_{\ell^{s'}_n} \leq1 } \sum_{k=1}^n a_k^j x_{j,k}}}_Y\\
    &\leq \nrm{P}_{\mc{L}(X,Y)} \nrms{\has{\sum_{j=1}^m \has{\sum_{k=1}^n \abs{x_{j,k}}^s}^{r/s}}^{1/r}}_X,
  \end{align*}
  so $\elR{r}{s}{\cbrace{P}} \leq \nrm{P}_{\mc{L}(X,Y)}$.
\end{proof}

For an $\ell^1$-bounded operator on a Lebesgye space one has $\el{r}{s}$-boundedness for all $r,s\in [1, \infty]$ (see \cite[Theorem 2.7.2]{HNVW16}).
The result below actually holds with $L^p(\Omega)$ replaced by any Banach lattice $X$ with a Levi norm (see \cite{Buh65} and \cite[Fact 2.5]{Lin14}).
A duality argument implies a similar result for $\ell^\infty$-boundedness.

\begin{prop}
Let $p\in [1, \infty)$ and $T\in \mc{L}(L^p(\Omega))$. If $T$ is $\ell^1$-bounded, then $\{T\}$ is $\el{r}{s}$-bounded for all $r,s\in [1, \infty]$.
\end{prop}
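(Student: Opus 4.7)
The plan is to reduce to the preceding proposition on positive operators by passing to the modulus $|T|$. The central observation is that $T \in \mc{L}(L^p(\Omega))$ is $\ell^1$-bounded precisely when the modulus $|T|$ exists as a bounded positive operator on $L^p(\Omega)$, with $\nrm{|T|}_{\mc{L}(L^p)} = \elRone{1}{\cbrace{T}}$. The easy direction is immediate from the pointwise inequality $|Tx_k| \leq |T||x_k|$ combined with the positivity of $|T|$, which gives
\begin{equation*}
\nrms{\sum_{k=1}^n |Tx_k|}_{L^p} \leq \nrms{|T|\sum_{k=1}^n |x_k|}_{L^p} \leq \nrm{|T|}_{\mc{L}(L^p)} \nrms{\sum_{k=1}^n |x_k|}_{L^p}.
\end{equation*}
For the nontrivial direction, the Riesz--Kantorovich formula defines, for $y \geq 0$ in $L^p(\Omega)$,
\begin{equation*}
|T|y := \sup\cbraces{\sum_{k=1}^n |Tx_k| : n \in \NN,\ x_k \geq 0,\ \sum_{k=1}^n x_k = y},
\end{equation*}
and the $\ell^1$-boundedness hypothesis forces this supremum to lie in $L^p(\Omega)$ with the claimed norm estimate. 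Extending by linearity produces $|T|$ as a positive bounded operator on all of $L^p(\Omega)$.

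Once $|T|$ is in hand, the preceding proposition on positive operators yields $\elR{r}{s}{\cbrace{|T|}} \leq \nrm{|T|}_{\mc{L}(L^p)}$ for every $r,s \in [1,\infty]$. The pointwise domination $|Tx_{j,k}| \leq |T||x_{j,k}|$ passes through the mixed lattice expressions, so
\begin{equation*}
\has{\sum_{j=1}^m \has{\sum_{k=1}^n |Tx_{j,k}|^s}^{r/s}}^{1/r} \leq \has{\sum_{j=1}^m \has{\sum_{k=1}^n (|T||x_{j,k}|)^s}^{r/s}}^{1/r}
\end{equation*}
pointwise a.e.\ on $\Omega$, and taking $L^p(\Omega)$-norms transfers the $\el{r}{s}$-bound for $|T|$ directly to $\cbrace{T}$, with the obvious modifications when $r$ or $s$ equals $\infty$. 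The only real obstacle is the existence of the modulus in the first step: this relies on the Dedekind completeness together with order continuity (or more generally a Levi norm) of the ambient lattice $L^p(\Omega)$, which is precisely the hypothesis alluded to in the remark preceding the statement; the remainder of the argument is a routine lattice-theoretic transfer.
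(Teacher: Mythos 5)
Your proposal is correct and follows the approach the paper itself delegates to the references (\cite[Theorem 2.7.2]{HNVW16}, \cite{Buh65}, \cite[Fact 2.5]{Lin14}): reduce $\ell^1$-boundedness to the existence of the modulus $|T|$ via the Riesz--Kantorovich construction and the Levi property of $L^p(\Omega)$ for $p<\infty$, then invoke the preceding proposition on positive operators and transfer the $\el{r}{s}$-bound back to $T$ through the pointwise domination $|Tx|\lesssim |T||x|$. This is precisely the reduction the paper has in mind, and your remark at the end correctly identifies the Levi-norm property as the structural hypothesis that makes the supremum defining $|T|y$ land in $L^p(\Omega)$.

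One small technical caveat you may wish to keep in mind, though it does not affect the qualitative conclusion: over complex scalars the decomposition $x=u+iv$ only yields $|Tx|\le |T|(|u|+|v|)\le \sqrt 2\,|T||x|$ with the purely-positive-decomposition formula for $|T|$ that you wrote down, so the equality $\nrm{|T|}_{\mc{L}(L^p)}=\elRone{1}{\{T\}}$ requires either working over $\RR$ or using the complex form of the Riesz--Kantorovich formula. Since the proposition asserts only that $\{T\}$ is $\el{r}{s}$-bounded (not a sharp constant), the extra absolute constant is harmless; your transfer step via pointwise domination and the positive-operator proposition goes through unchanged.
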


\begin{rmk}
  Even on $L^p$ it can be quite hard to establish the $\el{r}{s}$-boundedness of a single operator.
  By using i.i.d.\ $s$-stable random variables $\xi_1, \ldots, \xi_n\colon\Omega\to \RR$ (see \cite[Section 5]{LT91}), for $p\in (0,s)$ one can linearise the estimate by writing
  \[\Big(\sum_{j=1}^n |Tx_j|^s\Big)^{1/s} = C_{p,s}\Big\|T \sum_{j=1}^n \xi_j x_j\Big\|_{L^p(\Omega)}.\]
By using Fubini's theorem and Minkowski's inequality, one can deduce that any $T\in \calL(L^p)$ is $\el{r}{s}$-bounded if $p\leq r\leq s\leq 2$ or $2\leq s\leq r\leq p$.
  Most of the remaining cases seem to be open (see \cite[Problem 2]{Kwapfactor72} and \cite[Corollary 1.44]{dales2016multi}).
\end{rmk}

\subsection{Non-examples\label{subsect:counter}}

We end this section with two examples to demonstrate that $\el{r}{s}$-boundedness is not just the conjunction of $\ell^r$- and $\ell^s$-boundedness.
Consider the class of kernels
\begin{equation*}
  \mc{K} = \cbrace{k \in L^1(\RR): \abs{k*f} \leq Mf \text{ a.e. for all simple } f\colon\RR \to \RR},
\end{equation*}
where $M$ is the Hardy--Littlewood maximal operator. For $k \in \mc{K}$ and $f \in L^p(\RR)$ with $p \in (1,\infty)$ define an operator $T_k$ by
\begin{align*}
  T_kf(t) &= \int_{\RR} k(t-s)f(s) \dd s,
\end{align*}
and set $\mc{T} = \cbrace{T_k:k \in \mc{K}}$.

\begin{example}\label{ex:Tk}
  Let $p\in (1, \infty)$.
  The family of operators $\mc{T} \subset \mc{L}_b(L^p(\RR))$ defined above is $\ell^s$-bounded for all $s\in [1, \infty]$, but not $\el{1}{s}$- or $\el{\infty}{s}$- bounded for any $s\in (1, \infty)$.
\end{example}

\begin{proof}
  The $\ell^s$-boundedness of $\mc{T}$ for $s \in [1,\infty]$ is proved in \cite[Theorem 4.7]{NVW15}.
  Since $\mc{T} = \mc{T}^*$, Proposition \ref{prop:lrsdual} says that $\el{1}{s}$-boundedness of $\mc{T}$ on $L^p(\RR)$ implies $\el{\infty}{s^\prime}$-bounded\-ness on $L^{p^\prime}(\RR)$, so it suffices to show that $\mc{T}$ is not $\el{\infty}{s}$-bounded on $L^p(\RR)$ for any $s \in (1,\infty)$.
  We follow the proof of \cite[Proposition 8.1]{NVW15}.

  Fix $n \in \NN$ and for  $i,j \in \NN$ define $f_{i,j} \in L^p(\RR)$ by
  \begin{equation*}
    f_{i,j}(t) = \ind_{(0,1]}(t) \ind_{(2^{-j},2^{-j+1}]}(t-(i-1)2^{-n})
  \end{equation*}
  so that
  \begin{equation}\label{eqn:lsinfty-rhs}
    \nrms{\sup_{1\leq i \leq 2^n}\has{\sum_{j=1}^n \abs{f_{i,j}(t)}^s}^{1/s}}_{L^p(\RR)} \leq \nrms{\sup_{1\leq i \leq 2^n}\ind_{(0,1]}}_{L^p(\RR)} = 1.
  \end{equation}

  Next, for $i,j \in \NN$  define
  \begin{equation*}
    k_{i,j}(t) = \frac{1}{2^{-j+2}} \, \ind_{(-2^{-j+1},2^{-j+1})}(t)
  \end{equation*}
  and $T_{i,j} = T_{k_{i,j}}$.
  Then $T_{i,j} \in \mc{T}$, as for any simple function $f$ we have
  \begin{align*}
    \abs{T_{i,j}f(t)} =\abs{k_{i,j} *f (t)} &= \frac{1}{2^{-j+2}} \, \abss{\int_{\RR} \ind_{(-2^{-j+1},2^{-j+1})}(t-\tau)f(\tau)\, d\tau}\\
                                            &= \frac{1}{2^{-j+2}} \, \abss{\int_{t-2^{-j+1}}^{t+2^{-j+1}} f(\tau)\, d\tau} \leq Mf(t).
  \end{align*}
  Furthermore, for any $1 \leq j \leq n$, $t \in (0,1]$ and $1 \leq i \leq 2^{n}$ with  $t\in ((i-1)2^{-n},i2^{-n}]$,
  \begin{align*}
    \abs{T_{i,j}f_{i,j}(t)} &= \frac{1}{2^{-j+2}} \int_{t-2^{-j+1}-(i-1)2^{-n}}^{t+2^{-j+1}-(i-1)2^{-n}} \ind_{(2^{-j},2^{-j+1}]}(\tau)\, d\tau\\
                            &\geq \frac{1}{2^{-j+2}} \int_{2^{-j}}^{2^{-j+1}} \ind_{(2^{-j},2^{-j+1}]}(\tau)\, d\tau = \frac{2^{-j}}{2^{-j+2}} = \frac14.
  \end{align*}
  Therefore
  \begin{align*}
    \nrms{\sup_{1\leq i \leq 2^n} \has{ \sum_{j=1}^n \abs{T_{i,j}f_{i,j}(t)}^s }^{1/s} }_{L^p(\RR)} \geq \nrms{ \has{\frac{n}{4^s}}^{1/s} \ind_{(0,1]}}_{L^p(\RR)} = \frac{n^{1/s}}{4}
  \end{align*}
  which tends to $\infty$ as $n \to \infty$.
  Combining this with \eqref{eqn:lsinfty-rhs} disproves the $\el{\infty}{s}$-boundedness of $\mc{T}$ on $L^{p}(\RR)$.
\end{proof}

The previous example can be modified to construct examples without $\el{2}{s}$-boundedness, by using stochastic integral operators.
For $k \in \mc{K}$ and $f \in L^p(\RR_+)$ with $p \in (2,\infty)$, define
\begin{equation*}
  S_k f(t) := \int_0^t |k(t-s)|^\frac12f(s) \dd W(s),
\end{equation*}
where $W$ is a standard Brownian motion on a probability space $(\Omega,\mc{F},\PP)$.
Define $\mc{S} := \cbrace{S_k:k \in \mc{K}}$.

\begin{example}\label{eg:non-el2r-bdd}
Let $p\in (2, \infty)$. The family of operators $\mc{S}$ from $L^p(\RR_+)$ to $L^p(\RR_+ \times \Omega)$ is $\ell^r$-bounded for all $r\in [2, \infty)$, but not $\el{2}{r}$-bounded for any $r\in (2, \infty)$.
\end{example}

\begin{proof}
  Let $r \in [2,\infty)$ and $X = \ell^r$.
  Take $f \in L^p(\RR_+;X)$ and $k \in L^1(\RR_+;X)$ such that $k_{j} \in \mc{K}$ for all $j \in \NN$.
  By \cite[Corollary 2.10]{NW05} and the Kahane--Khintchine inequalities (see for example \cite{LT91}), we know that
  \begin{equation*}
    \has{\EE\nrms{\int_0^t \abs{k(t-s)}^\frac{1}{2}\abs{f(s)}\dd W(s)}_X^p}^{1/p} \simeq \nrms{\has{\int_0^t \abs{k(t-s)}\abs{f(s)}^2\dd s}^\frac{1}{2}}_X
  \end{equation*}
  for any $t \in \RR_+$.
  This implies that $\mc{S}$ is $\ell^r$-bounded from $L^p(\RR_+)$ to $L^p(\RR_+ \times \Omega)$ if and only if $\mc{T}$ restricted to $\RR_+$ is $\ell^{r/2}$-bounded on $L^{p/2}(\RR_+)$, so $\mc{S}$ is $\ell^r$-bounded for all $r \in [2,\infty)$ by Example \ref{ex:Tk}. Repeating the argument with $X = \ell^2(\ell^r)$, we also get from Example \ref{ex:Tk} that  $\mc{S}$ is not $\el{2}{r}$-bounded for any $r\in (2, \infty)$.
\end{proof}

\section{The function spaces \texorpdfstring{$V^s(\mc{J};Y)$}{Vs(J;Y)}\label{sec:VsRs} and \texorpdfstring{$R^s(\mc{J};Y)$}{Rs(J;Y)} }

The multipliers we consider are members of the space of functions of bounded $s$-variation, which we denote by $V^s(\mc{J},Y)$ for $s \geq 1$.
This space contains the class of $1/s$-H\"older continuous functions.
In our arguments we will also use the atomic function space $R^s(\mc{J},Y)$, which was introduced in the scalar case in \cite{CRS88}.

\begin{dfn}\ \label{def:svariation}
  \begin{enumerate}[(i)]
  \item Let $Y$ be a Banach space, $J =[J_-,J_+] \subset \RR$ a bounded interval and $s \in [1,\infty)$. A function $\map{f}{\RR}{Y}$ is said to be of \emph{bounded $s$-variation on $J$}, or $f \in V^s(J;Y)$, if
    \begin{equation*}
      \nrm{f}_{V^s(J;Y)} := \|f\|_\infty +  [f]_{{V_s(J;Y)}} <\infty,
    \end{equation*}
    where
    \begin{equation*}
      [f]_{V_s(J;Y)}  := \sup_{J_- = t_0 <  \cdots < t_N = J_+} \has{\sum_{i=1}^N \nrm{f(t_{i-1}) - f(t_i)}_Y^s}^{1/s}.
    \end{equation*}
    Furthermore we define $V^\infty(J;Y) = L^\infty(J;Y)$.

  \item When $\mc{J}$ is a collection of mutually disjoint bounded intervals in $\RR$, the space $V^s(\mc{J};Y) \subset L^\infty(\RR;Y)$ consists of all $f \in  L^\infty(\RR;Y)$ such that
    \begin{equation*}
      \nrm{f}_{V^s(\mc{J};Y)} := \sup_{J \in \mc{J}} \nrm{f|_J}_{V^s(J;Y)} < \infty.
    \end{equation*}
    If $\mc{J} = (J_k)_{k \in \NN}$ is ordered, we define $V^s_0(\mc{J};Y) \subset V^s(\mc{J};Y)$ to be the closed subspace consisting of $f \in V^s(\mc{J};Y)$ with $\lim_{k \to \infty} \nrm{f|_{J_k}}_{V^s(J;Y)} = 0$.
\end{enumerate}
\end{dfn}

Clearly $V^s(\mc{J};Y) \hookrightarrow V^t(\mc{J};Y)$ contractively when $1 \leq s \leq t \leq \infty$, and $V^s(\mc{J};Y)$ is complete when $Y$ is complete.

In our applications the space $Y$ is usually the span of a bounded and absolutely convex subset $B$ of a normed space $Z$  (i.e. a \emph{disc} in $Z$), equipped with the Minkowski norm
\begin{equation*}
  \nrm{x}_B := \inf\{ \lambda > 0 : \tfrac{x}{\lambda} \in B \},
\end{equation*}
and we write
$ V^s(\mc{J};B) := V^s(\mc{J};\spn B)$.
Clearly $\|x\|_{Z}\leq C_B \|x\|_{B}$ for $x\in Y$.
If the Minkowski norm on $\spn B$ is complete, then $B$ is called a \emph{Banach disc}.
  If $Z$ is a Banach space and $B$ is closed, then $B$ is a Banach disc \cite[Proposition 5.1.6]{PCB87}, but this is not a necessary condition \cite[Proposition 3.2.21]{PCB87}.

\begin{dfn}\label{def:Rspaces}\
  \begin{enumerate}[(i)]
  \item
    Let $Y$ be a normed space, $J \subset \RR$ a bounded interval, and $s \in [1,\infty)$.
    Say that a function $\map{a}{J}{Y}$ is an \emph{$R^s(J;Y)$-atom}, written $a \in R_{\text{at}}^s(J;Y)$, if there exists a set $\mc{I}$ of mutually disjoint subintervals of $J$ and a set of vectors $(c_I)_{I \in \mc{I}} \subset Y$ such that
    \begin{equation*}
      a = \sum_{I \in \mc{I}} c_I \mb{1}_I \quad \text{and} \quad \has{ \sum_{I \in \mc{I}}  \nrm{c_I}_Y^s }^{1/s} \leq 1.
    \end{equation*}	
    Define $R^s(J;Y) \subset L^\infty(J;Y)$ by
    \begin{equation*}
      R^s(J;Y) := \cbraces{ f \in L^\infty(J;Y) : f = \sum_{k=1}^\infty \lambda_k a_k,  (\lambda_k) \in \ell^1, (a_k) \subset R_{\text{at}}^s(J;Y)},
    \end{equation*}
    where the series $f = \sum_{k=1}^\infty \lambda_k a_k$ converges in $L^\infty(J;Y)$.
    Define a norm on $R^s(J;Y)$ by
    \begin{equation*}
      \nrm{f}_{R^s(J;Y)} := \inf\cbraces{ \nrm{\lambda_k}_{\ell^1} : \text{$f = \sum_{k=1}^\infty \lambda_k a_k$ as above} }.
    \end{equation*}
    Furthermore we define $R^\infty(J;Y) := L^\infty(J;Y)$.

  \item
    When $\mc{J}$ is a collection of mutually disjoint bounded intervals in $\RR$, the space $R^s(\mc{J};Y) \subset L^\infty(\RR;Y)$ consists of all $f \in  L^\infty(\RR;Y)$ such that
    \begin{equation*}
      \nrm{f}_{R^s(\mc{J};Y)} := \sup_{J \in \mc{J}} \nrm{f|_J}_{R^s(J;Y)} < \infty.
    \end{equation*}
    If $\mc{J} = (J_k)_{k \in \NN}$ is ordered, we define $R^s_0(\mc{J};Y) \subset R^s(\mc{J};Y)$ to be the closed subspace consisting of $f \in R^s(\mc{J};Y)$ with $\lim_{k \to \infty} \nrm{f|_{J_k}}_{R^s(J_k;Y)} = 0$.
  \end{enumerate}
\end{dfn}
Clearly $R^s(\mc{J};Y) \hookrightarrow R^t(\mc{J};Y)$ contractively when $1 \leq s \leq t \leq \infty$, and $R^s(\mc{J};Y)$ is complete when $Y$ is complete. As with the classes $V^s$, when $B$ is a disc in a normed space $Z$, we put the Minkowski norm on the linear span of $B$ and write $R^s(\mc{J};B) := R^s(\mc{J};\spn B)$.

For $\alpha\in (0,1]$ and an interval $J \subset \RR$ we let $C^\alpha(J;Y)$ denote the space of $\alpha$-H\"older continuous functions with $\nrm{f}_{C^\alpha(J;Y)} = \max\{ \nrm{f}_{\infty}, [f]_{C^\alpha(J;Y)} \}$, where
\begin{equation*}
  [f]_{C^\alpha(J;Y)} := \sup_{x,y \in J} \frac{\nrm{f(x) - f(y)}_Y}{|x-y|^\alpha}.
\end{equation*}

\begin{lem}\label{lem:embeddingVHR}
Let $s\in [1, \infty)$, let $Y$ be a Banach space and fix a bounded interval $J\subset \RR$.
\begin{enumerate}[(i)]
\item If $q\in (s, \infty)$, then $R^{s}(J;Y) \subset V^s(J;Y) \subset R^q(J;Y)$ and for all $f\in L^\infty(J;Y)$ we have
  \begin{equation*}
    \|f\|_{R^q(J;Y)}\lesssim_{q,s}  \|f\|_{V^s(J;Y)} \lesssim \|f\|_{R^s(J;Y)} .
  \end{equation*}
\item We have $C^{1/s}(J;Y)\subset V^s(J;Y)$, and for all $f\in V^s(J;Y)$,
  \begin{equation*}
    \|f\|_{V^s(J;Y)}\leq \|f\|_{\infty} + |J|^{1/s} [f]_{C^{1/s}(J;Y)}.
  \end{equation*}
\end{enumerate}
\end{lem}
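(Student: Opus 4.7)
The plan is to handle part (ii) by a direct estimate, and part (i) via two separate embeddings $R^s \hookrightarrow V^s$ and $V^s \hookrightarrow R^q$. For (ii), I would apply the H\"older bound termwise on any partition $J_- = t_0 < \cdots < t_N = J_+$ to obtain
\begin{equation*}
\sum_{i=1}^N \nrm{f(t_{i-1}) - f(t_i)}_Y^s \leq [f]_{C^{1/s}(J;Y)}^s \sum_{i=1}^N \abs{t_i - t_{i-1}} = \abs{J}\, [f]_{C^{1/s}(J;Y)}^s;
\end{equation*}
taking $s$th roots and supremum over partitions yields $[f]_{V_s(J;Y)} \leq \abs{J}^{1/s}\, [f]_{C^{1/s}(J;Y)}$, which gives the claimed norm estimate.

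For $R^s(J;Y) \hookrightarrow V^s(J;Y)$, since $V^s(J;Y)$ is a Banach space it suffices to bound the $V^s$-norm uniformly on any atom $a = \sum_{I \in \mc{I}} c_I \mb{1}_I$ with pairwise disjoint $I$ and $\sum_I \nrm{c_I}_Y^s \leq 1$. Disjointness gives $\nrm{a}_\infty \leq 1$ immediately. For any partition $(t_i)$, each value $a(t_i)$ is either some $c_{I(i)}$ (when $t_i$ lies in a unique $I(i) \in \mc{I}$) or zero, and each $c_I$ can participate in at most two consecutive-value transitions, namely where the partition first enters and last exits $I$. Using $\nrm{c_I - c_{I'}}^s \leq 2^{s-1}(\nrm{c_I}^s + \nrm{c_{I'}}^s)$, I then obtain $\sum_i \nrm{a(t_{i-1}) - a(t_i)}^s \lesssim_s \sum_I \nrm{c_I}^s \leq 1$, yielding the uniform atomic bound.

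For $V^s(J;Y) \hookrightarrow R^q(J;Y)$ with $q > s$, the plan is a dyadic telescoping construction. Let $\phi(x) := [f|_{[J_-, x]}]_{V_s(J;Y)}^s$ denote the accumulated $s$-variation; this is nondecreasing and satisfies $\nrm{f(y) - f(x)}^s \leq \phi(y) - \phi(x)$ for $x < y$. For each $N \geq 0$ I would construct a nested partition $J_- = \tau_0^N < \cdots < \tau_{2^N}^N = J_+$ such that each subinterval has $s$-variation at most $2^{-N/s}[f]_{V_s}$, and define $f_N(x) := f(\tau_{k-1}^N)$ on $[\tau_{k-1}^N, \tau_k^N)$, so that $\nrm{f - f_N}_\infty \to 0$. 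Each telescoping difference $f_N - f_{N-1}$ is a step function supported on at most $2^N$ subintervals with values of norm $\lesssim 2^{-N/s}[f]_{V_s}$, hence equals $\lambda_N a_N$ for an $R^q$-atom $a_N$ with $\abs{\lambda_N} \lesssim [f]_{V_s} \cdot 2^{N/q} \cdot 2^{-N/s} = [f]_{V_s} \cdot 2^{-N(1/s - 1/q)}$. Since $1/s - 1/q > 0$ the $\lambda_N$ form a geometric series, and adding the constant atom $f(J_-)\mb{1}_J$ produces the desired representation with $\nrm{f}_{R^q(J;Y)} \lesssim_{q,s} \nrm{f}_{V^s(J;Y)}$. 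The main obstacle is the partition-selection step: if $\phi$ has jumps exceeding $2^{-N}\phi(J_+)$, an exact equipartition of $\phi$ is impossible, and one must insert all such large-jump points into the partition in advance and handle the resulting exceptional pieces as separate atoms, which contributes only a bounded correction.
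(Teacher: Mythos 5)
Your proof follows the same route as the paper for part (ii) and for the embedding $R^s(J;Y) \hookrightarrow V^s(J;Y)$: the paper also just estimates the $V^s$-norm of an atom and declares (ii) "straightforward to check." One small discrepancy in the first embedding: your inequality $\nrm{c_I - c_{I'}}^s \le 2^{s-1}(\nrm{c_I}^s + \nrm{c_{I'}}^s)$ produces a constant that grows like $2^s$, whereas the statement asserts $\nrm{f}_{V^s(J;Y)} \lesssim \nrm{f}_{R^s(J;Y)}$ with no $s$-dependence. The paper avoids this by applying Minkowski's inequality in $\ell^s$ directly to the transitions: since the partition enters and exits each $I$ at most once, $\bigl(\sum_i \nrm{a(t_{i-1}) - a(t_i)}^s\bigr)^{1/s} \le 2\bigl(\sum_{I}\nrm{c_I}^s\bigr)^{1/s}$, giving $\nrm{a}_{V^s(J;Y)} \le 3$ uniformly in $s$.

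For $V^s(J;Y) \hookrightarrow R^q(J;Y)$ you take a genuinely different path: the paper simply cites \cite[Lemme 2]{CRS88} and asserts the scalar argument carries over, whereas you reconstruct the decomposition. Your dyadic telescoping via the accumulated $s$-variation is the right idea, and the counting $\lambda_N \lesssim 2^{N/q - N/s}[f]_{V_s(J;Y)}$ correctly produces the geometric series for $q > s$. The weak point is exactly where you flag it: insisting on a \emph{nested} partition is what forces you to confront the jump discontinuities, and the proposed fix ("insert all large-jump points in advance") needs more care than the sketch suggests — one must check that the piece count stays $O(2^N)$ and not, say, $O(3^N)$, or the geometric decay of $\lambda_N$ is lost. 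A cleaner route that sidesteps both nesting and the jump accounting is to build, for each $N$, a fresh greedy stopping-time partition $\tau_0^N := J_-$, $\tau_k^N := \sup\{ y \le J_+ : [f|_{[\tau_{k-1}^N, y]}]_{V_s(J;Y)}^s \le 2^{-N}[f]_{V_s(J;Y)}^s \}$; superadditivity of $[\cdot]_{V_s}^s$ over consecutive subintervals bounds the number of pieces by $O(2^N)$, each half-open piece has $s$-variation at most $2^{-N/s}[f]_{V_s(J;Y)}$, and a large jump merely forces an earlier stop rather than an extra piece. Then $f_N - f_{N-1}$ is a step function on the common refinement of two such partitions, which is still $O(2^N)$ pieces with values of norm $\lesssim 2^{-N/s}[f]_{V_s(J;Y)}$, and your coefficient bound and geometric summation go through verbatim.
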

\begin{proof}
For part (i) we note that both $R^{s}(J;Y) \subset V^s(J;Y)$ and the second norm estimate follow directly from the fact that for any atom $a \in R^s_\text{at}(J;Y)$ with
\begin{equation*}
  a = \sum_{I \in \mc{I}} c_I \mb{1}_I
\end{equation*}
we have by Minkowski's inequality that
\begin{align*}
  \nrm{a}_{V^s(J;Y)} \leq \sup_{I \in \mc{I}} \nrm{c_I}_Y + \has{\sum_{\substack{I,J \in \mc{I}\\I\neq J}} \nrm{c_I-c_J}^s}^{1/s}\leq 1+ 2 \has{\sum_{I \in \mc{I}} \nrm{c_I}^s}^{1/s} \leq 3.
\end{align*}
The embedding $V^s(J;Y) \subset R^q(J;Y)$ with the first norm estimate is shown in \cite[Lemme 2]{CRS88} for scalar functions, and the argument extends to the general case.
Part (ii) is straightforward to check.
\end{proof}

We end this section with complex interpolation containments for the $V^s$- and $R^s$-classes.
It is an open problem whether complex interpolation of the $V^s$-classes as below can be proved with $\varepsilon = 0$ (see \cite[Chapter 12]{gP16}).
It is also not clear whether converse inclusions hold, but since we don't need them we leave the question open.

\begin{thm}\label{thm:VR-interpoln}
  Suppose $1 \leq q_0 \leq q_1\leq \infty$, $\theta \in (0,1)$, $\varepsilon>0$ and let $Y$ be a Banach space.
  Then for all bounded intervals $J \subset \RR$ we have continuous inclusions
  \begin{align}
    \label{eq:Vs-single} V^{[q_0,q_1]_\theta-\varepsilon}(J;Y) &\hookrightarrow [V^{q_0}(J;Y), V^{q_1}(J;Y)]_\theta, \\
   \label{eq:Rs-single} R^{[q_0,q_1]_\theta}(J;Y) &\hookrightarrow [R^{q_0}(J;Y), R^{q_1}(J;Y)]_\theta, \qquad q_1 \neq \infty.
  \end{align}
  Furthermore, if $\mc{J} = (J_k)_{k \in \NN}$ is an ordered collection of mutually disjoint bounded intervals in $\RR$, then we have continuous inclusions
  \begin{align}
    \label{eq:Vs-multi}V_0^{[q_0,q_1]_\theta-\varepsilon}(\mc{J};Y) &\hookrightarrow [V_0^{q_0}(\mc{J};Y), V_0^{q_1}(\mc{J};Y)]_\theta\\
     \label{eq:Rs-multi} R_0^{[q_0,q_1]_\theta}(\mc{J};Y) &\hookrightarrow [R_0^{q_0}(\mc{J};Y), R_0^{q_1}(\mc{J};Y)]_\theta, \qquad q_1 \neq \infty.
  \end{align}
\end{thm}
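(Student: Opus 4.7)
My plan is to prove the four inclusions in a cascade, with the atomic case \eqref{eq:Rs-single} doing the real interpolation work and the remaining three statements following by composition with the sandwich $R^{s}\hookrightarrow V^{s}\hookrightarrow R^{s+\varepsilon}$ from Lemma~\ref{lem:embeddingVHR}(i). The loss of $\varepsilon$ in the $V^s$-inclusions will originate precisely from the second of those embeddings, which is strictly weaker than $V^s \hookrightarrow R^s$.

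For \eqref{eq:Rs-single}, assume $q_1<\infty$ and write $q_\theta := [q_0,q_1]_\theta$ and $\alpha(z) := q_\theta\bigl(\tfrac{1-z}{q_0}+\tfrac{z}{q_1}\bigr)$. For an atom $a=\sum_{I\in\mc{I}} c_I\mb{1}_I \in R^{q_\theta}_{\mathrm{at}}(J;Y)$ I will use the classical $\ell^s$-interpolation trick and define
\begin{equation*}
F(z) := \sum_{I\in\mc{I}} \frac{c_I}{\nrm{c_I}_Y}\,\nrm{c_I}_Y^{\alpha(z)}\,\mb{1}_I
\end{equation*}
on the closed strip $\bar S := \{0\le\mathrm{Re}(z)\le1\}$, with the convention that summands with $c_I=0$ are dropped. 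Then $F$ is continuous and bounded on $\bar S$, analytic on its interior, and $F(\theta)=a$ because $\alpha(\theta)=1$. For $z=j+it$ with $j\in\{0,1\}$ the factor $\nrm{c_I}_Y^{\alpha(z)}$ has modulus $\nrm{c_I}_Y^{q_\theta/q_j}$ times a unimodular analytic phase, and $\sum_I \nrm{c_I}_Y^{q_\theta/q_j\cdot q_j} = \sum_I \nrm{c_I}_Y^{q_\theta} \leq 1$, so $F(j+it)$ is (up to a unit scalar) an $R^{q_j}(J;Y)$-atom. Extending linearly to countable convex combinations $f=\sum_k \lambda_k a_k$ and taking the infimum over atomic decompositions yields \eqref{eq:Rs-single} with constant $1$.

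For \eqref{eq:Vs-single} with $q_1<\infty$, Lemma~\ref{lem:embeddingVHR}(i) combined with the monotonicity of complex interpolation in the couple gives the chain
\begin{equation*}
V^{q_\theta-\varepsilon}(J;Y) \hookrightarrow R^{q_\theta}(J;Y) \hookrightarrow \bigl[R^{q_0}(J;Y),R^{q_1}(J;Y)\bigr]_\theta \hookrightarrow \bigl[V^{q_0}(J;Y),V^{q_1}(J;Y)\bigr]_\theta.
\end{equation*}
If $q_1=\infty$, I will pick $q_1'\in[q_0,\infty)$ large enough that $[q_0,q_1']_\theta > q_\theta-\tfrac{\varepsilon}{2}$; this is possible since $[q_0,q_1']_\theta \nearrow q_0/(1-\theta)=q_\theta$ as $q_1'\to\infty$. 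The finite-exponent case then gives $V^{q_\theta-\varepsilon}(J;Y) \hookrightarrow V^{[q_0,q_1']_\theta-\varepsilon/2}(J;Y) \hookrightarrow [V^{q_0}(J;Y),V^{q_1'}(J;Y)]_\theta$, which I compose with the interpolation of the contractive inclusion $V^{q_1'}(J;Y)\hookrightarrow V^{\infty}(J;Y)$.

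The multi-interval cases \eqref{eq:Vs-multi} and \eqref{eq:Rs-multi} will be handled by running the same argument uniformly across intervals. For $f\in R_0^{q_\theta}(\mc{J};Y)$, I will select atomic decompositions $f|_{J_k} = \sum_j \lambda_k^j a_k^j$ on each $J_k$ with $\sum_j |\lambda_k^j| \leq \nrm{f|_{J_k}}_{R^{q_\theta}(J_k;Y)} + 2^{-k}$; the associated analytic family, built piecewise from the Step-1 construction on each $J_k$, then satisfies $\nrm{F(j+it)|_{J_k}}_{R^{q_j}(J_k;Y)} \leq \sum_j |\lambda_k^j|$, which tends to $0$ by the $R_0$-hypothesis, so the boundary values genuinely lie in $R_0^{q_j}(\mc{J};Y)$. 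The passage to \eqref{eq:Vs-multi} then reuses the chain argument via $R_0^{q_\theta}$, with the $q_1=\infty$ subcase handled exactly as before. The only real obstacle is conceptual: the $V^s$-classes carry no intrinsic $\ell^s$-type atomic structure compatible with the complex method, which is what forces the detour through $R^{q_\theta}$ and the attendant $\varepsilon$-loss that \cite{gP16} records as genuinely open.
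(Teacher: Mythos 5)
Your proof is correct, but it takes a genuinely different route from the paper's in two of the four inclusions.

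For \eqref{eq:Rs-single}, your explicit analytic family $F(z) = \sum_I \frac{c_I}{\|c_I\|_Y}\|c_I\|_Y^{\alpha(z)}\mb{1}_I$ is a concrete realization of exactly what the paper does abstractly: the paper invokes $\ell^{q_\theta}(\mc{I};Y) = [\ell^{q_0}(\mc{I};Y),\ell^{q_1}(\mc{I};Y)]_\theta$ from Triebel to produce an abstract $C_k \in \mc{F}(\ell^{q_\bullet}(\mc{I}_k;Y))$, then verifies analyticity of the induced $A_k$ via a Taylor-expansion argument. Your $\alpha(z)$-trick is the standard constructive proof of that same $\ell^q$-interpolation identity, so the two are essentially the same (with your version leaving the analyticity of $z \mapsto F(z)$ as a routine but unspoken check). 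The real divergence is in \eqref{eq:Vs-single}: the paper does not derive it from \eqref{eq:Rs-single} at all, but instead uses real interpolation (Pisier's \cite[Lemma 12.11]{gP16}, the $(\cdot,\cdot)_{\theta,\infty} \hookrightarrow (\cdot,\cdot)_{\theta,1} \hookrightarrow [\cdot,\cdot]_\theta$ chain from Bergh--L\"ofstr\"om, and then reiteration to reach general $q_0,q_1$). Your derivation via the sandwich $V^{q_\theta-\varepsilon} \hookrightarrow R^{q_\theta} \hookrightarrow [R^{q_0},R^{q_1}]_\theta \hookrightarrow [V^{q_0},V^{q_1}]_\theta$ (with the $q_1'$ truncation for $q_1=\infty$) is more self-contained — it stays entirely within the complex method and makes \eqref{eq:Rs-single} the single engine of the theorem — and it correctly locates the $\varepsilon$-loss in the strict inequality $q > s$ required by Lemma~\ref{lem:embeddingVHR}(i). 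Likewise for the multi-interval cases: the paper proves \eqref{eq:Vs-multi} by the isometric identification $V_0^s(\mc{J};Y) \cong c_0(\mc{J};V^s([0,1);Y))$ via the rescaling maps $\tau_J$ and then applies interpolation of $c_0$-direct sums, whereas you build $F$ piecewise over the $J_k$'s. Both work; the paper's route cleanly sidesteps having to re-verify analyticity and continuity of $z \mapsto F(z)$ as a map into the sum space $R_0^{q_1}(\mc{J};Y)$, which your construction should address explicitly (it follows from coordinate-wise analyticity plus the uniform-in-$z$ bound $\|F(z)|_{J_k}\|_{R^{q_1}(J_k;Y)} \leq \sum_j|\lambda_k^j|$ and its decay in $k$, but this deserves a line). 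One small quantitative point: the additive $2^{-k}$ slack in your choice of atomic decompositions yields the bound $\|f\|_{[R_0^{q_\bullet}]_\theta} \leq \|f\|_{R_0^{q_\theta}} + 1$, which is not a norm inequality; replace it with a multiplicative slack $(1+\delta)\|f|_{J_k}\|_{R^{q_\theta}(J_k;Y)}$ and let $\delta \downarrow 0$ (or scale).
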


\begin{proof}
  For $q_0=1$ and $q_1=\infty$ we have \eqref{eq:Vs-single} by applying subsequently \cite[Lemma 12.11]{gP16}, \cite[Theorem 3.4.1]{BL76}, and \cite[Theorem 4.7.1]{BL76},
  \begin{align*}
    V^{[q_0,q_1]_\theta-\varepsilon}(J;Y) &\hookrightarrow \ha*{V^1(J;Y), L^\infty(J;Y)}_{\theta_{\varepsilon},\infty}   \\
     &\hookrightarrow \ha*{V^1(J;Y), L^\infty(J;Y)}_{\theta,1} \\ &\hookrightarrow \brac*{V^1(J;Y), L^\infty(J;Y)}_{\theta}
  \end{align*}
  with
  \begin{equation*}
    \theta_\varepsilon = 1 - \frac{1}{\frac{1}{1-\theta}-\varepsilon} < \theta.
  \end{equation*}
  The intermediate cases follow from the reiteration theorem for complex interpolation \cite[Theorem 4.6.1]{BL76}.

\bigskip

In the remainder of the proof we will need the following notation: when $\mc{I}_k$ is a collection of intervals for each $k \in \NN$ and $I \in \mc{I}_k$, let $\pi_{I,k}$ denote the canonical projection $\ell^\infty(\mc{I}_k;Y) \to Y$. We abbreviate Banach couples $(X_0,X_1)$ by $X_\bullet$, and use this shorthand for expressions like
\begin{equation*}
  [\ell^{p_\bullet}(\NN;X)]_\theta = [\ell^{p_0}(\NN;X), \ell^{p_1}(\NN;X)]_\theta.
\end{equation*} We let $\mc{F}(X_\bullet)$ denote the space of bounded analytic functions from the closed strip $\overline{S} := \{z \in \CC : \Re z \in [0,1]\}$ to the sum $X_0 + X_1$ whose restrictions to the sets $\{z \in \CC : \Re z = 0\}$ and $\{z \in \CC : \Re z = 1\}$ map continuously into $X_0$ and $X_1$ respectively, equipped with the norm
\begin{equation*}
  \nrm{F}_{\mc{F}(X_\bullet)} := \max\left( \sup_{t \in \RR} \nrm{F(it)}_{X_0}, \sup_{t \in \RR} \nrm{F(1 + it)}_{X_1} \right)
\end{equation*}
as in \cite[\textsection 4.1]{BL76}.

\bigskip

        For \eqref{eq:Rs-single} let $1 \leq q_0 \leq q_1\leq \infty$ and write $q_\theta := [q_0,q_1]_\theta$ for brevity.  Suppose $f \in R^{q_\theta}(J;Y)$, with atomic decomposition
        \begin{equation*}
          f = \sum_{k=1}^\infty \lambda_k a_k =  \sum_{k=1}^\infty \lambda_k \sum_{I \in \mc{I}_k} \mb{1}_I \pi_{I,k} (c_k),
        \end{equation*}
        where $c_k \in \ell^{q_\theta}(\mc{I}_k;Y)$ for each $k \in \NN$.

        Let $\varepsilon > 0$.
        For each $k \in \NN$ we have $\ell^{q_\theta}(\mc{I}_k;Y) = [\ell^{q_\bullet}(\mc{I}_k;Y)]_\theta$ with equal norms \cite[Theorem 1.18.1]{hT78}, hence there exists a function $C_k \in \mc{F}(\ell^{q_\bullet}(\mc{I}_k;Y))$ with $C_k(\theta) = c_k$ and $\|C_k\|_{\mc{F}(\ell^{q_\bullet}(\mc{I}_k;Y))} \leq (1+\varepsilon)\|c_k\|_{\ell^{q_\theta}(\mc{I}_k;Y)} \leq 1 + \varepsilon$.
        For all $z \in \overline{S}$ and $t \in J$, define
        \begin{equation*}
          A_{k}(z)(t) := \sum_{I \in \mc{I}_k} \mb{1}_I(t) \pi_{I,k}(C_{k}(z)),
        \end{equation*}
        noting that for each $t$ there is at most one non-zero term in the sum.
        It follows from $\|C_k\|_{\mc{F}(\ell^{q_\bullet}(\mc{I}_j;Y))} \leq 1 + \varepsilon$ that
        $\nrm{A_{k}}_{\mc{F}(R^{q_\bullet}(J;Y))} \leq 1 + \varepsilon$ for all $z \in \overline{S}$.

        We will show that each $\map{A_{k}}{S}{R^{q_0}(J;Y) + R^{q_1}(J;Y)}$ is analytic on $S$, using that $R^{q_0}(J;Y) + R^{q_1}(J;Y) = R^{q_1}(J;Y)$ and $\ell^{q_0}(\mc{I}_k;Y) + \ell^{q_1}(\mc{I}_k;Y) = \ell^{q_1}(\mc{I}_k;Y)$.
        Fix $z_0 \in S$.
        Since $C_{k}$ is analytic with values in $\ell^{q_1}(\mc{I}_k;Y)$, there exists a Taylor expansion
        \begin{equation*}
          C_{k}(z) = \sum_{n=0}^\infty (z-z_0)^n \beta_{k,n}
        \end{equation*}
        for $z$ in a neigbourhood of $z_0$, where $(\beta_{k,n})_{n=0}^\infty \subset \ell^{q_0}(\mc{I}_k;Y)$ is a bounded sequence.
        Thus for such $z$ we have
        \begin{align*}
          A_{k}(z)
          &= \sum_{I \in \mc{I}_k} \mb{1}_I \pi_{I,k}(C_{k}(z))
          = \sum_{n=0}^\infty (z-z_0)^n \sum_{I \in \mc{I}_k} \mb{1}_{I} \pi_{I,k} (\beta_{k,n})
          =: \sum_{n=0}^\infty (z-z_0)^n \gamma_{k,n}
        \end{align*}
        using the mutual disjointness of $\mc{I}_k$ to interchange the sums.
        The functions $\gamma_{k,n}$ are in $R^{q_1}(J;Y)$ as we can write
        \begin{equation*}
          \nrm{\gamma_{k,n}}_{R^{q_1}(J;Y)}
          = \nrms{\sum_{I \in \mc{I}_k} \mb{1}_I \pi_{I,k} (\beta_{k,n})}_{R^{q_1}(J;Y)} \\
          \leq \nrm{\beta_{k,n}}_{\ell^{q_1}(\mc{I}_k;Y)} < \infty.
        \end{equation*}
        Similarly we can show that each $\map{A_{k}}{\overline{S}}{R^{q_1}(J;Y)}$ is continuous.

        Now for $z \in \overline{S}$ and $j \in \NN$ define
        \begin{equation*}
          F(z) := \sum_{k=1}^\infty \lambda_k A_{k}(z).
        \end{equation*}
        Since the functions $\map{A_{k}}{S}{R^{q_0}(J;Y) + R^{q_1}(J;Y)}$ are bounded uniformly in $k$, continuous on $\overline{S}$, and analytic on $S$, and since $\lambda \in \ell^1(\NN)$, and each $A_{k}$ maps into $R^{q_0}(J;Y) + R^{q_1}(J;Y)$, we find that each $F \in \mc{F}(R^{q_\bullet}(J;Y))$.
        Furthermore we have
        \begin{equation*}
          F(\theta) = \sum_{k=1}^\infty \lambda_k A_k(\theta) = \sum_{k=1}^\infty \lambda_k \sum_{I \in \mc{I}_k} \mb{1}_I \pi_{I,k}(C_k(\theta)) = f
        \end{equation*}
        and
        \begin{equation*}
          \nrm{F}_{\mc{F}(R^{q_\bullet}(J;Y))} \leq \nrm{\lambda_k}_{\ell^1(\NN)} \sup_{k \in \NN} \nrm{A_k}_{\mc{F}(R^{q_\bullet}(J;Y))} \leq (1+\varepsilon) \nrm{\lambda_k}_{\ell^1(\NN)}.
        \end{equation*}
        Since $\varepsilon > 0$ was arbitrary, taking the infimum over all atomic decompositions of $f$ and all possible $F \in \mc{F}(R^{q_\bullet}(J;Y))$ with $F(\theta) = f$ completes the proof.

\bigskip

  Now consider a collection $\mc{J}$ of mutually disjoint bounded intervals in $\RR$.
  We will only prove \eqref{eq:Vs-multi}, as the proof of \eqref{eq:Rs-multi} is similar.
  We introduce the following notation: if $J = [J_-, J_+) \subset \RR$ is a bounded interval and $f \in L^0(J;Y)$, we let $f_J \in L^0([0,1);Y)$ be the function
  \begin{equation*}
    f_J(x) := f((J_+ - J_-)x + J_+) \qquad x \in [0,1).
  \end{equation*}
  Then for each $s \in [1,\infty]$ the map $\map{\tau_J}{V^s(J;Y)}{V^s([0,1);Y)}$ defined by
  $\tau_J(f) := f_J $
  is an isometry. Consequently we can write
  \begin{equation*}
    \nrm{f}_{V^s(\mc{J};Y)} = \sup_{J \in \mc{J}} \nrm{f|_J}_{V^s(J;Y)} = \sup_{J \in \mc{J}} \nrm{\tau_J(f|_J)}_{V^s([0,1);Y)},
  \end{equation*}
  and therefore the map $\map{\Phi}{V_0^s(\mc{J};Y)}{c_0(\mc{J};V^s([0,1);Y))}$ defined by
  \begin{equation*}
    \Phi(f) := (\tau_J(f|_J))_{J \in \mc{J}}
  \end{equation*}
  is an isometry.
  Since the intervals in $\mc{J}$ are mutually disjont, $\Phi$ is an isometric isomorphism.
  Thus  $\Phi^{-1}$ induces an isometric isomorphism
  \begin{align*}
    \Phi^{-1} \colon c_0\bigl(\mc{J};[V^{q_\bullet}([0,1);Y)]_\theta\bigr) = \bigl[c_0(\mc{J};V^{q_\bullet}([0,1);Y))\bigr]_\theta \to [V_0^{q_\bullet}(\mc{J};Y)]_\theta,
  \end{align*}
  using \cite[Remark 3, \textsection 1.18.1]{hT78}. By \eqref{eq:Vs-single} we have
  \begin{equation*}
    V^{[q_0,q_1]_\theta-\varepsilon}([0,1);Y) \hookrightarrow [V^{q_\bullet}([0,1);Y)]_\theta,
  \end{equation*}
  so that $\Phi^{-1}$ yields an embedding
  \begin{equation*}
    c_0(\mc{J};V^{[q_0,q_1]_\theta-\varepsilon}([0,1);Y)) \hookrightarrow [V_0^{q_\bullet}(\mc{J};Y)]_\theta.
  \end{equation*}
Precomposing with $\Phi$ gives the bounded inclusion
\begin{equation*}
  V_0^{[q_0,q_1]_\theta-\varepsilon}(\mc{J};Y) \hookrightarrow [V_0^{q_\bullet}(\mc{J};Y)]_\theta
\end{equation*}
and completes the proof.
\end{proof}

\section{Fourier multipliers}\label{sec:multiplier}

The Fourier transform and operator-valued Fourier multipliers on vector-valued functions are defined similarly to the scalar-valued case.
Here we just mention that our normalisation of the Fourier transform is
\begin{equation*}
  \wh{f}(\xi) = \FF f(\xi) := \int_{\RR^d} f(t) e^{-2\pi i t\cdot\xi}\dd t, \quad f\in L^1(\RR^d;X), \, \xi \in \RR^d,
\end{equation*}
and that since $\mc{S}(\RR^d) \otimes X$ is dense in $L^p(w;X)$ for every $p \in (1,\infty)$ and $w\in A_\infty$ (see \cite[Ex.\ 9.4.1]{lG09} for the scalar case), the $L^p(w;X) \to L^p(w;Y)$-boundedness of a Fourier multiplier $T_m \colon \mc{S}(\RR^d) \otimes X \to \mc{S}^\prime(\RR^d;Y)$ reduces to the estimate
\begin{equation*}
  \|T_mf\|_{L^p(w;Y)} \lesssim \|f\|_{L^p(w;X)}, \qquad f \in \Sch(\RR^d) \otimes X.
\end{equation*}
Our goal is to find conditions on Banach function spaces $X$ and $Y$ which imply this estimate for $m\in V^s(\Delta;\calL(X,Y))$ and $w$ in a suitable Muckenhoupt class.
We will only consider multipliers $m$ defined on $\RR$; extensions to multipliers defined on $\RR^d$ can be obtained by an induction argument as in \cite[Section 4]{sK14}, \cite{mL07} and \cite{Xu96}, and extensions to multipliers on the torus $\TT$ can be obtained by transference, see \cite[Proposition 4.1]{ALV1}.
In this case one must consider multipliers defined on $\hat{\TT} = \ZZ$, where bounded $s$-variation for a function on $\ZZ$ is defined analogously to Definition \ref{def:svariation}.

We start with a result that is well-known in the unweighted setting (see \cite{HHN02, StrWeis08}).
This is not so important to our main results; it will only be used in the proof of Theorem \ref{thmintermediateUMD}.
Recall that $\Delta = \{ \pm[2^k, 2^{k+1}), k \in \ZZ\}$ is the standard dyadic partition of $\RR$.

\begin{thm}[Vector-valued Marcinkiewicz multiplier theorem]\label{thm:main-multgeneralUMD}
  Let $X$ and $Y$ be UMD Banach spaces, and suppose $\mc{T} \subset \mc{L}_b(X,Y)$ is absolutely convex and $\mc{R}$-bounded.
  Suppose $m \in V^1(\Delta;\mc{T})$.
  Then for all $p \in (1, \infty)$ and $w \in A_{p}$,
  \begin{equation*}
    \nrm{T_m}_{\mc{L}(L^p(w;X), L^p(w;Y))} \leq  \inc_{X,Y,p}([w]_{A_{p}}) \elRr{\mc{T}} \nrm{m}_{V^1(\Delta;\mc{T})}.
  \end{equation*}
\end{thm}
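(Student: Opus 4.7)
The plan is to follow the classical route: decompose via a Littlewood--Paley-type unconditional decomposition associated with $\Delta$, decompose $m$ on each dyadic interval using its bounded variation, and then bring in $\mc{R}$-boundedness of $\mc{T}$ and of the family of Fourier projections onto intervals.

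First I would establish two ingredients in the weighted vector-valued setting. Since $X$ and $Y$ are $\UMD$, the Hilbert transform is bounded on $L^p(w;X)$ and $L^p(w;Y)$ for $w\in A_p$, with norm depending on the space, $p$ and $[w]_{A_p}$. Writing $S_{(a,\infty)}=\tfrac{1}{2}(I+iM_{-a}HM_a)$, where $M_c$ is modulation by $e^{2\pi ic\cdot}$, and using closure properties of $\mc{R}$-bounded sets, one deduces that $\{S_I : I\subset \RR\text{ an interval}\}$ is $\mc{R}$-bounded on $L^p(w;X)$ and on $L^p(w;Y)$ with $\mc{R}$-bound of the form $\inc_{X,p}([w]_{A_p})$ and $\inc_{Y,p}([w]_{A_p})$. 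As a consequence of the $\UMD$ property (Bourgain's theorem in the weighted form), the dyadic projections $(S_J)_{J\in\Delta}$ form an unconditional Schauder decomposition, giving the two-sided estimate
\begin{equation*}
  \|g\|_{L^p(w;Z)} \,\sim_{Z,p,[w]_{A_p}}\, \EE\Bigl\|\sum_J \varepsilon_J S_J g\Bigr\|_{L^p(w;Z)}, \qquad Z\in\{X,Y\},
\end{equation*}
for a Rademacher sequence $(\varepsilon_J)$. Applying the upper bound with $Z=Y$ and $g=T_mf$, it suffices to estimate the randomised sum $\EE\|\sum_J\varepsilon_J S_J T_m f\|_{L^p(w;Y)}$.

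Next I would decompose $m$ on each $J=[J_-,J_+)\in\Delta$ via Riemann--Stieltjes. For $\xi\in J$ we may write
\begin{equation*}
  m(\xi)=m(J_-)+\int_{(J_-,\xi]}dm(t),
\end{equation*}
where $dm$ is a $\spn(\mc{T})$-valued measure of total Minkowski variation $[m|_J]_{V_1(J;\mc{T})}\le \|m\|_{V^1(\Delta;\mc{T})}$. This yields the operator identity
\begin{equation*}
  S_J T_m f = m(J_-)\,S_J f + \int_{(J_-,J_+]} S_{[t,J_+)}S_J f\,dm(t),
\end{equation*}
which I would substitute into the randomised sum and estimate the two pieces separately. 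For the endpoint piece, since $\mc{T}$ is absolutely convex, $m(J_-)/\|m\|_\infty\in\mc{T}$, so $\mc{R}$-boundedness of $\mc{T}$ gives
\begin{equation*}
  \EE\Bigl\|\sum_J\varepsilon_J m(J_-)S_J f\Bigr\|_{L^p(w;Y)} \le \|m\|_\infty\,\elRr{\mc{T}}\,\EE\Bigl\|\sum_J \varepsilon_J S_J f\Bigr\|_{L^p(w;X)},
\end{equation*}
and the lower LP-equivalence in $X$ bounds the right-hand side by $\inc_{X,p}([w]_{A_p})\|m\|_\infty\elRr{\mc{T}}\|f\|_{L^p(w;X)}$.

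For the integral piece, I would approximate each $\int_{(J_-,J_+]}S_{[t,J_+)}S_J f\,dm(t)$ by Riemann--Stieltjes sums: choose partitions $J_-=t_0<\cdots<t_N=J_+$ and write the increments as $m(t_k)-m(t_{k-1})=\alpha_{J,k}\|m\|_{V^1(\Delta;\mc{T})}A_{J,k}$ with $A_{J,k}\in\mc{T}$ and $\sum_{k}\alpha_{J,k}\le 1$ for each $J$. Absorbing the scalars $\alpha_{J,k}$ into the Rademachers via the Kahane contraction principle, the doubly-indexed $\mc{R}$-boundedness of $\mc{T}$ (and of the family $\{S_{[t,J_+)}S_J\}\subset\overline{\operatorname{absco}}\{S_I\}$) bounds the corresponding sum by
\begin{equation*}
  \elRr{\mc{T}}\,\inc_{X,p}([w]_{A_p})\,\|m\|_{V^1(\Delta;\mc{T})}\,\EE\Bigl\|\sum_J\varepsilon_J S_J f\Bigr\|_{L^p(w;X)},
\end{equation*}
uniformly in the partitions, and then passing to the limit (with dominated convergence in $L^p(w;Y)$ using the uniform $\mc{R}$-bounds) yields the estimate for the integral term. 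Summing the two contributions gives the claim.

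The main obstacle will be the integral term: making the Riemann--Stieltjes approximation and the passage to the limit rigorous in a general $\UMD$ Banach space (without a lattice structure), while simultaneously tracking the $\mc{R}$-bound coming from the operator-valued measure $dm$ and from the continuous family $\{S_{[t,J_+)}S_J : t\in J, J\in\Delta\}$. Obtaining $\mc{R}$-boundedness of interval Fourier projections on $L^p(w;X)$ for general (non-function-space) $\UMD$ $X$, $w\in A_p$, precludes any extrapolation argument and must be done directly from the weighted vector-valued Hilbert transform bound.
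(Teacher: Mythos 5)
Your proposal is correct and follows essentially the same route as the paper, which simply cites \cite[Theorem 4.3]{HHN02} together with weighted Littlewood--Paley inequalities with sharp cut-off functions (as in \cite{FHL17}); you have written out the Riemann--Stieltjes decomposition and $\mc{R}$-boundedness argument that the paper delegates to those references. One small imprecision worth noting: the $\mc{R}$-boundedness of $\{S_I : I \subset \RR \text{ an interval}\}$ on $L^p(w;X)$ does not follow from the boundedness of $H$ via ``closure properties of $\mc{R}$-bounded sets'' alone, since the family $\{M_{-a}HM_a : a \in \RR\}$ is not the absolutely convex hull or strong closure of a finite set; one needs the Kahane contraction principle applied pointwise in the space variable to absorb the unimodular modulations, together with the boundedness of $H$ on $L^p(w;\mathrm{Rad}(X))$.
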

\begin{proof}
  To prove the result one can repeat the argument in \cite[Theorem 4.3]{HHN02} using weighted Littlewood--Paley inequalities with sharp cut-off functions, which can be found for instance in \cite{FHL17} (see also \cite{nL14}).
\end{proof}

Our starting point for multiplier theorems for $m\in V^s$ with $s>1$ is an estimate of Littlewood--Paley--Rubio de Francia type.
For an interval $I\subset \RR$ let
$S_I$ denote the Fourier projection onto $I$, defined by $S_I f := (\mathbf{1}_I \hat{f} )^\vee$ for Schwartz functions $f\in \Sch(\RR)\otimes X$. The following result was obtained in \cite[Theorem 6.5]{ALV1}. Related results have been obtained in \cite{sK14,PSX12}.

\begin{thm}\label{thm:LPRmod-lat}
  Suppose $q\in [2, \infty)$ and let $X$ be a Banach function space such that $X^{q'}\in \UMD$.
  Let $\mc{I}$ be a collection of mutually disjoint intervals in $\RR$.
  Then for all $p > q'$, all $w \in A_{p/q'}$, and all $f\in L^p(w;X)$,
\begin{equation*}
\nrms{\Bigl(\sum_{J\in \Delta} \Bigl( \sum_{\substack{I \in \mc{I}\\I \subset J}} |S_I f|^q \Bigr)^{2/q} \Bigr)^{1/2}}_{L^p(w;X)} \leq \inc_{X,p,q}([w]_{A_{p/q'}}) \nrm{f}_{L^p(w;X)}.
\end{equation*}
\end{thm}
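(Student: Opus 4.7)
The plan is to reduce the vector-valued weighted estimate to a scalar one by extrapolation, and then to establish the scalar case by combining an $\ell^q$-valued Rubio de Francia inequality for disjoint intervals with a weighted Littlewood--Paley square function inequality applied to $\ell^q$-valued functions.

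First I would set up the extrapolation. Since $X^{q'} \in \UMD$ by hypothesis, Theorem \ref{thm:pair-extrap-p} applied with $p_0 = q'$ reduces the claim to the corresponding scalar estimate: for all $p > q'$, all $w \in A_{p/q'}$, and all $f \in L^p(w)$,
\begin{equation*}
\nrms{\has{\sum_{J \in \Delta}\has{\sum_{\substack{I \in \mc{I} \\ I \subset J}}\abs{S_I f}^q}^{2/q}}^{1/2}}_{L^p(w)} \leq \inc_{p,q}([w]_{A_{p/q'}})\, \nrm{f}_{L^p(w)}.
\end{equation*}
Indeed, the family indexed by $\mc{I}$ and consisting of pairs $(f, \text{RHS expression})$ can be inserted into the hypothesis of Theorem \ref{thm:pair-extrap-p}, and the conclusion is exactly the $X$-valued estimate sought.

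To prove the scalar inequality, I would proceed in two steps. Define $T f := (S_I f)_{I \in \mc{I}}$, valued in $\ell^q(\mc{I})$. The weighted $\ell^q$-valued Rubio de Francia inequality for disjoint intervals (extending Rubio de Francia's classical $\ell^2$ estimate to $q \in [2,\infty)$; see e.g.\ \cite{sK09, sK14, PSX12}) yields
\begin{equation*}
\nrm{Tf}_{L^p(w;\ell^q(\mc{I}))} \leq \inc_{p,q}([w]_{A_{p/q'}}) \nrm{f}_{L^p(w)}, \qquad p > q',\ w \in A_{p/q'}.
\end{equation*}
Since $\ell^q \in \UMD$ for $q \in (1,\infty)$, the weighted vector-valued dyadic Littlewood--Paley square function
\begin{equation*}
g \mapsto \has{\sum_{J \in \Delta}\abs{S_J g}_{\ell^q}^2}^{1/2},
\end{equation*}
with $S_J$ acting componentwise, is bounded on $L^p(w; \ell^q)$ by the weighted operator-valued Mikhlin multiplier theorem. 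Composing these two bounds and using $S_J S_I f = S_I f$ whenever $I \subset J$ recovers the desired scalar estimate, up to a boundary correction.

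The main obstacle is precisely this boundary correction: sharp Fourier projections $S_J$ and $S_I$ do not commute when $I \not\subset J$ but $I \cap J \neq \emptyset$, so $S_J T f$ also sees contributions $S_{I \cap J} f$ from intervals $I$ that straddle $\partial J$. For each $J \in \Delta$ at most two such intervals exist (those containing the endpoints of $J$), and the collection $\{I \cap J : J \in \Delta,\ I \in \mc{I},\ I \not\subset J,\ I \cap J \neq \emptyset\}$ is again a family of pairwise disjoint intervals, to which the same $\ell^q$-valued Rubio de Francia inequality applies, allowing the extra terms to be absorbed. Alternatively, one may replace the sharp $S_J$ by a smooth dyadic Littlewood--Paley resolution, which commutes with the $S_I$ up to Fourier multipliers controlled by the weighted Mikhlin theorem; this streamlines the argument at the cost of extra smoothing estimates.
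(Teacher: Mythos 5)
The paper itself does not prove this theorem; it is imported from \cite[Theorem~6.5]{ALV1}. Your approach — extrapolate from the scalar case via Theorem~\ref{thm:pair-extrap-p} with $p_0=q'$, then handle the scalar case by composing an $\ell^q$-valued Rubio de Francia inequality for disjoint intervals with a dyadic Littlewood--Paley square function on $L^p(w;\ell^q)$ — is sound in outline and, given that vector-valued extrapolation is the central tool of \cite{ALV1}, is almost certainly the same reduction used there. Three remarks. First, your ``boundary correction'' worry is misplaced and in fact dissolves for free: the quantity to be bounded sums only over $I\subset J$, whereas $\abs{S_J(Tf)}_{\ell^q}$ also includes the nonnegative contributions $\abs{S_{I\cap J}f}^q$ from straddling intervals, so the Littlewood--Paley square function of $Tf$ \emph{pointwise dominates} the target expression and no absorption or second application of Rubio de Francia is needed. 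Second, attributing the sharp-cutoff Littlewood--Paley estimate on $L^p(w;\ell^q)$ to the ``weighted operator-valued Mikhlin multiplier theorem'' is imprecise, since indicator multipliers are not Mikhlin symbols; the correct reference is the sharp-cutoff Littlewood--Paley inequality for $\UMD$ Banach function spaces (as in \cite[Proposition~6.1]{ALV1}, building on \cite{FHL17, nL14}), which does hold here because $w\in A_{p/q'}\subset A_p$ and $\ell^q\in\UMD$. Third, the real load-bearing step is the scalar $\ell^q$-valued weighted Rubio de Francia inequality for $p>q'$ and $w\in A_{p/q'}$; this is a nontrivial result that must be sourced precisely (it is essentially in \cite{sK09} and related work), and it is the part where a citation-level gap would be fatal. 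With that caveat, your argument is correct.
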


For Hilbert spaces the following variant holds (see \cite[Proposition 6.6 and Remark 6.7]{ALV1}).

\begin{prop}\label{prop:LPRmod-latHilbert}
  Suppose $q\in [2, \infty)$ and let $X$ be a Hilbert space.  Let $\mc{I}$ be a collection of mutually disjoint intervals in $\RR$.
  Then for all $p > q'$, all $w \in A_{p/q'}$ and all $f\in L^p(w;X)$,
\begin{equation*}
\nrms{\Bigl(\sum_{J\in \Delta} \Bigl( \sum_{\substack{I \in \mc{I}\\I \subset J}} \nrm{S_I f}_X^q \Bigr)^{2/q} \Bigr)^{1/2}}_{L^p(w)} \leq \inc_{p,q}([w]_{A_{p/q'}}) \nrm{f}_{L^p(w;X)}.
\end{equation*}
\end{prop}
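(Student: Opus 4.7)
The plan is to reduce the Hilbert space statement to the Banach function space version already established in Theorem \ref{thm:LPRmod-lat}, by identifying $X$ with $\ell^2(K)$ for some index set $K$ via a choice of orthonormal basis. First I fix an orthonormal basis $(e_k)_{k \in K}$ of $X$ and write $f \in \Sch(\RR) \otimes X$ as a finite sum $f = \sum_k f_k e_k$ with scalar Schwartz components $f_k$. Since $S_I$ acts componentwise, the pointwise identity $\|S_I f\|_X^2 = \sum_k |S_I f_k|^2$ holds for every interval $I$.

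Next, since $q \geq 2$, the exponent $q/2$ is at least $1$, so Minkowski's inequality in $\ell^{q/2}_I$ yields the pointwise bound
\begin{equation*}
\left(\sum_{\substack{I \in \mc{I}\\ I \subset J}} \|S_I f\|_X^q \right)^{2/q} = \left\| \Bigl( \sum_k |S_I f_k|^2 \Bigr)_I \right\|_{\ell^{q/2}_I} \leq \sum_k \left( \sum_{\substack{I \in \mc{I}\\ I \subset J}} |S_I f_k|^q \right)^{2/q}.
\end{equation*}
Summing over $J \in \Delta$, writing $G_k := \bigl( \sum_J (\sum_{I \subset J} |S_I f_k|^q)^{2/q} \bigr)^{1/2}$ for the scalar Littlewood--Paley--Rubio de Francia square function associated to $f_k$, and taking $L^{p/2}(w)$-norms gives
\begin{equation*}
\left\| \left( \sum_J \Bigl( \sum_{I \subset J} \|S_I f\|_X^q \Bigr)^{2/q} \right)^{1/2} \right\|_{L^p(w)}^2 \leq \Bigl\| \sum_k G_k^2 \Bigr\|_{L^{p/2}(w)} = \|(G_k)_k\|_{L^p(w;\ell^2(K))}^2.
\end{equation*}

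For $q > 2$ I then invoke Theorem \ref{thm:LPRmod-lat} with the Banach function space $\ell^2(K) \cong X$: the hypothesis $\ell^2(K)^{q'} = \ell^{2/q'}(K) \in \UMD$ is satisfied because $2/q' > 1$, and the theorem gives
\begin{equation*}
\|(G_k)_k\|_{L^p(w;\ell^2(K))} \leq \inc_{p,q}([w]_{A_{p/q'}}) \|(f_k)_k\|_{L^p(w;\ell^2(K))} = \inc_{p,q}([w]_{A_{p/q'}}) \|f\|_{L^p(w;X)},
\end{equation*}
which, chained with the bound from the previous paragraph, yields the proposition for $q > 2$.

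The main obstacle is the endpoint $q = 2$, where $\ell^{2/q'}(K) = \ell^1(K)$ fails to be $\UMD$ and the reduction above breaks down. In that case, however, the double sum telescopes via $\sum_J \sum_{I \subset J} = \sum_I$, so the statement collapses to the classical weighted Hilbert-valued Rubio de Francia inequality
\begin{equation*}
\left\|\Bigl( \sum_I \|S_I f\|_X^2 \Bigr)^{1/2}\right\|_{L^p(w)} \lesssim \|f\|_{L^p(w;X)}, \qquad p > 2,\ w \in A_{p/2},
\end{equation*}
which is a standard result, provable for instance by Plancherel at $(p,w) = (2,1)$ followed by Rubio de Francia extrapolation applied to the pair $\bigl( (\sum_I \|S_I f\|_X^2)^{1/2}, \|f\|_X \bigr)$.
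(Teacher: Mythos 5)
The paper does not give its own proof of this proposition; it simply cites \cite[Proposition 6.6 and Remark 6.7]{ALV1}, so there is no internal argument to compare against. Evaluating your proof on its own terms: the reduction for $q > 2$ is correct and clean. Choosing an orthonormal basis, using Minkowski's inequality in $\ell^{q/2}_I$ to get the pointwise bound, taking $L^{p/2}(w)$-norms, and then invoking Theorem~\ref{thm:LPRmod-lat} with $X = \ell^2(K)$ (where $(\ell^2)^{q'} = \ell^{2/q'}$ is $\UMD$ precisely because $q > 2$) gives the statement for that range, including the full range $p > q'$ (nothing in your argument requires $p \geq 2$). One small imprecision: you write ``$\sum_J \sum_{I \subset J} = \sum_I$'' at the $q = 2$ endpoint, but this is only an inequality, since intervals $I$ straddling two dyadic blocks are dropped; this does not affect the argument because you only need the upper bound.

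The gap is in the suggested proof of the $q = 2$ endpoint. You propose Plancherel at $(p,w) = (2,1)$ followed by Rubio de Francia extrapolation, but this does not work: extrapolation with exponent $p_0 = 2$ (to reach all $p > 2$ and $w \in A_{p/2}$) requires the $L^2(w)$-estimate for \emph{every} $w \in A_1$, and Plancherel alone only gives the single unweighted case $w = 1$. The endpoint $L^2(w)$-estimate for general $A_1$ weights is genuinely part of the weighted Rubio de Francia theorem and does not come for free. A correct route is to invoke the scalar weighted Rubio de Francia inequality (for $p > 2$, $w \in A_{p/2}$) from \cite{jR85} directly, and then pass to the Hilbert-valued version by Minkowski: using $p/2 \geq 1$ to move the $\sum_k$ outside the $L^{p/2}(w)$-norm, applying the scalar estimate to each component $f_k$, and then using the embedding $\ell^2(K;L^p(w)) \hookrightarrow L^p(w;\ell^2(K))$ (again valid since $p \geq 2$) to reassemble. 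Alternatively, simply cite the known Hilbert-valued weighted result, as the paper does via \cite{ALV1}.
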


\subsection{Multipliers in Hilbert spaces}\label{subs:multi-hilbert}
The first part of the following theorem is an analogue of \cite[Theorem A(i)]{sK14}, and the second part is an unweighted analogue of \cite[Theorem A(ii)]{sK14}.
The second part is also proved in \cite[Proposition 3.3]{HP06}.
The exponents $(p,s)$ for which each part of the theorem applies are pictured in Figure \ref{fig:exponents3}.

\begin{thm}\label{thm:HScase}
  Let $X$ and $Y$ be Hilbert spaces, $p,s\in (1, \infty)$, and consider a multiplier $m\in V^s(\Delta;\mc{L}_b(X,Y))$.
  \begin{enumerate}[(i)]
  \item \label{it:HScase1} If $s \leq 2$ and $p \geq s$, then for all $w \in A_{p/s}$ we have
  \begin{align*}
    \| T_m\|_{\calL(L^p(w;X), L^p(w;Y))} &\leq \inc_{p,s}([w]_{A_{p/s}}) \|m\|_{V^s(\Delta;\mc{L}_b(X,Y))}.
  \end{align*}
  \item \label{it:HScase2}If $\frac{1}{s}>\bigl|\frac{1}{p}-\frac{1}{2}\bigr|$ we have
      \begin{align*}
    \| T_m\|_{\calL(L^p(\RR;X), L^p(\RR;Y))} &\lesssim_{p,s} \|m\|_{V^s(\Delta;\mc{L}_b(X,Y))}.
  \end{align*}
  \end{enumerate}
\end{thm}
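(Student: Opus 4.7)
The plan is to combine three ingredients: the dyadic Littlewood--Paley inequality for Hilbert-space valued functions (valid since Hilbert spaces are $\UMD$), the atomic decomposition $V^s(J) \hookrightarrow R^q(J)$ for $q > s$ from Lemma~\ref{lem:embeddingVHR}, and the Littlewood--Paley--Rubio de Francia estimate for Hilbert spaces (Proposition~\ref{prop:LPRmod-latHilbert}).

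For part~(i) I would first assume $s < 2$ and $p > s$ strictly. Using Proposition~\ref{prop:muckenhoupt}\,(ii), pick $q \in (s, \min\{p, 2\})$ such that $w \in A_{p/q}$. Applying the dyadic Littlewood--Paley inequality on the $Y$-valued side reduces the task to bounding $\|(\sum_{J \in \Delta} \|T_{m\mb{1}_J} f\|_Y^2)^{1/2}\|_{L^p(w)}$. For each dyadic $J$, decompose $m|_J = \sum_n \lambda_n^J a_n^J$ as in Lemma~\ref{lem:embeddingVHR} with atoms $a_n^J = \sum_{I \in \mc{I}_n^J} c^J_{I,n}\mb{1}_I$, where $\sum_I \|c^J_{I,n}\|_{\mc{L}_b(X,Y)}^q \le 1$ and $\sum_n |\lambda_n^J| \lesssim_{s,q} \|m\|_{V^s(\Delta)}$. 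H\"older's inequality then gives the pointwise atom bound
\[
\|T_{a^J_n} f(x)\|_Y \le \has{\sum_{I \in \mc{I}_n^J} \|S_I f(x)\|_X^{q'}}^{1/q'}.
\]
Combining these bounds and applying Proposition~\ref{prop:LPRmod-latHilbert} at parameter $q' \ge 2$ (whose hypotheses $p > q$, $w \in A_{p/q}$ are furnished by our choice of $q$) should close off the argument. The boundary cases $p = s$ and $s = 2$ would then be handled by limiting or extrapolation arguments, with Plancherel covering the $s=2$, $p=2$ corner.

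For part~(ii), Plancherel handles $p = 2$ directly. For $p > 2$ with $s < 2$ the argument of part~(i) applies with $w = 1$. For $p > 2$ with $s \ge 2$ in the CRS range $\tfrac{1}{s} > \tfrac{1}{2} - \tfrac{1}{p}$, I would apply bilinear complex interpolation on the map $(m, f) \mapsto T_m f$ between the endpoints $V^\infty \times L^2 \to L^2$ (Plancherel) and $V^{s_0} \times L^{p_1} \to L^{p_1}$ (part~(i) with suitable $s_0 \in (1,2)$ and $p_1 > 2$); the complex interpolation embedding~\eqref{eq:Vs-single} of Theorem~\ref{thm:VR-interpoln} then shows that any $m \in V^s$ lies in the interpolated $V$-class, and a direct computation matches the CRS range. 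The range $p < 2$ follows by duality, using $T_m^* = T_{m^*}$ and the fact that $m \mapsto m^*$ is an isometry $V^s(\Delta; \mc{L}_b(X,Y)) \to V^s(\Delta; \mc{L}_b(Y^*, X^*))$ when $X, Y$ are Hilbert.

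The main technical obstacle I anticipate is combinatorial: the atom-wise estimate for $T_{a^J_n}$ involves the atom-dependent collection $\mc{I}_n^J$, whereas Proposition~\ref{prop:LPRmod-latHilbert} expects a single fixed collection of intervals per $J$. I would resolve this by first reducing to finitely-supported $m$ via a density argument, then refining the partitions $(\mc{I}_n^J)_n$ on each $J$ into a single common refinement, so that every atom on $J$ is expressed over the same underlying intervals before the LPR estimate is invoked.
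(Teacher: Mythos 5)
Your overall architecture (Littlewood--Paley + atomic decomposition + H\"older + LPR for part~(i); Plancherel + bilinear complex interpolation + duality for part~(ii)) matches the paper's, but there is a genuine gap in part~(i), and your assessment of where the technical difficulty lies is off target.

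The real obstacle is not the one you identify. You write the atomic decomposition as $m|_J = \sum_n \lambda_n^J a_n^J$ with coefficients $\lambda_n^J$ depending on $J$, and then propose to ``combine these bounds and apply Proposition~\ref{prop:LPRmod-latHilbert}.'' This does not close. After the Littlewood--Paley step you are facing
\begin{equation}
  \nrms{\has{\sum_{J\in\Delta}\has{\sum_n |\lambda_n^J|\,\nrm{T_{a_n^J} S_J f}_Y}^2}^{1/2}}_{L^p(w)},
\end{equation}
and if the $\lambda_n^J$ vary with $J$ you cannot pull the $n$-sum outside the $\ell^2_J$-norm by Minkowski (that requires the same weight on each block). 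The paper resolves this by invoking the fact---going back to \cite[Theorem 2.3]{HP06}---that one may \emph{choose} the decomposition with coefficients $\lambda_k$ independent of $J$, at the cost of $(1+\varepsilon)\sup_J\nrm{m|_J}_{R^q(J)}$. Once the $\lambda_k$ are $J$-uniform, Minkowski lets you pull out $\sum_k|\lambda_k|$, and then for each fixed $k$ the collection $\bigcup_J\mc{J}_k^J$ is a perfectly good fixed family of disjoint subintervals of the dyadic blocks, so Proposition~\ref{prop:LPRmod-latHilbert} applies directly. Consequently your proposed ``common refinement'' of the partitions $(\mc{I}_n^J)_n$ is both unnecessary and problematic: refining an atom onto a finer grid repeats coefficients and destroys the $\ell^q$ normalisation, so it does not actually produce atoms. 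The thing to fix is the $J$-dependence of the scalar coefficients, not of the interval families.

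Two further points. First, the boundary cases $s=2$ (with $p>2$) and $p=s$ are not handled by ``limiting or extrapolation arguments''; the paper treats them by two separate interpolation schemes---a self-improvement via reverse H\"older and bilinear complex interpolation against Plancherel for $s=2$, and a further interpolation using duality and the $A_1$ reverse H\"older trick for $p=s$. These steps carry genuine content and should be spelled out. Second, your part~(ii) is correct in spirit, but you should actually verify that your choice of endpoints ($V^{s_0}\times L^{p_1}\to L^{p_1}$ for $s_0\in(1,2)$, versus the paper's $V_0^2\times L^t\to L^t$) recovers the full range $\frac1s>\frac12-\frac1p$; with your endpoints the exponent bookkeeping is slightly different, and because of the $\varepsilon$-loss in \eqref{eq:Vs-single} one must also check that the inequality is strict in the right places. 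The paper also needs the reduction to $V_0^s$ via truncation before interpolating, which you should include.
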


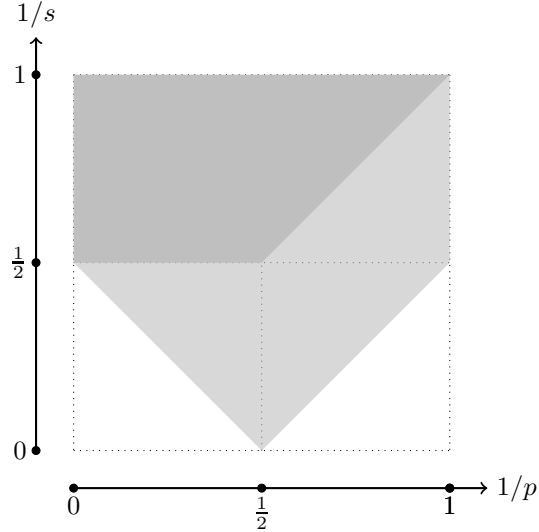
\begin{figure}[h!]
\caption{Allowable exponents for Theorem \ref{thm:HScase}: the weighted case (i) dark shaded, the unweighted case (ii) light shaded.}
\begin{center}
\begin{tikzpicture}[scale=5]
        \pgfmathsetmacro{\THETA}{0.6} 
        \pgfmathsetmacro{\parQ}{(1)} 

	\coordinate (P1) at (0,1/2);
	\coordinate (P2) at (1/2,0);
	\coordinate (P3) at (1,1/2);
	\coordinate (P4) at (1/2,1/2);
    \coordinate (P5) at ({\parQ*(1 - \THETA) + \THETA},1);

    \coordinate (P22) at ({\parQ*(1 - \THETA) + \THETA/2},{\parQ*(1 - \THETA) + \THETA/2});
    \coordinate (P32) at ({\parQ*(1 - \THETA) + \THETA},{\parQ*(1 - \THETA) + \THETA});
    \coordinate (P42) at (0,{\parQ*(1 - \THETA) + \THETA});

	\coordinate (TL) at (0,1);
	\coordinate (TR) at (1,1);

	\draw [thick,->] (0,-0.1) -- (1.1,-0.1);
        \node [right] at (1.1,-0.1) {$1/p$};

	\draw [fill=black] (0,-0.1) circle [radius = .3pt];
	\node [below] at (0,-0.1) {$0$};

	\draw [fill=black] (1,-0.1) circle [radius = .3pt];
	\node [below] at (1,-0.1) {$1$};

        \draw [fill=black] (1/2, -0.1) circle [radius = .3pt];
        \node [below] at (1/2,-0.1) {$\frac{1}{2}$};

        \draw [fill=black] ({\parQ*(1 - \THETA) + \THETA},-0.1) circle [radius = .3pt];
        \node [below] at ({\parQ*(1 - \THETA) + \THETA},-0.1) {$1$};

	\draw [thick,->] (-0.1,0) -- (-0.1,1.1);
	\node [above] at (-0.1,1.1) {$1/s$};

        \draw [fill=black] (-0.1,0) circle [radius = .3pt];
	\node [left] at (-0.1,0) {$0$};

        \draw [fill=black] (-0.1,{1/2}) circle [radius = .3pt];
        \node [left] at (-0.1,{1/2}) {$\frac{1}{2}$};

	\draw [fill=black] (-0.1,1) circle [radius = .3pt];
	\node [left] at (-0.1,1) {$1$};


        \draw [thin,dotted] (0,1) -- (1,1); 
	\draw [thin,dotted] (0,0) -- (1,0); 

        \draw [thin,dotted] (0,{1/2}) -- (1,{1/2});


	\draw [thin,dotted] (0,1) -- (0,0); 
	\draw [thin,dotted] (1,1) -- (1,0); 

        \draw [thin,dotted] (1/2,0) -- (1/2,1);

	\path [fill=lightgray, opacity = 0.6] (TL)--(P1)--(P2)--(P3)--(TR);

        \path [fill=lightgray, opacity = 1] (TL)--(P1)--(P4)--(TR);	
\end{tikzpicture}
\end{center}
\label{fig:exponents3}
\end{figure}

To prove Theorem \ref{thm:HScase} we use the following proposition, which is a version of the first part for $R$-class multipliers.
 The techniques used to prove this proposition are strongly related to those used in the proof of our main result for $\UMD$ Banach function spaces, Theorem \ref{thm:mult-s-var}.

\begin{prop}\label{prop:HScaseProp}
    Let $X$ and $Y$ be Hilbert spaces, $s\in (1, 2]$, and consider a multiplier $m\in R^s(\Delta;\mc{L}_b(X,Y))$.
     Then for all $p > s$ and $w \in A_{p/s}$ we have
  \begin{align*}
    \| T_m\|_{\calL(L^p(w;X), L^p(w;Y))} &\leq \inc_{p,s}([w]_{A_{p/s}}) \|m\|_{R^s(\Delta;\mc{L}_b(X,Y))}.
  \end{align*}
\end{prop}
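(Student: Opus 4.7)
The plan is to combine the Hilbert-space Littlewood--Paley square function with a pointwise atomic bound on each dyadic piece of $m$ and then apply the Rubio de Francia inequality of Proposition \ref{prop:LPRmod-latHilbert}. Since $Y$ is a Hilbert space, the scalar weighted Littlewood--Paley inequality with sharp cutoffs (e.g.\ \cite{FHL17}) combined with Khintchine's inequality yields, for all $p \in (1,\infty)$ and $w \in A_p$,
\[
  \|g\|_{L^p(w;Y)} \lesssim_{p,Y} \Bigl\|\Bigl(\sum_{J \in \Delta} \|S_J g\|_Y^2\Bigr)^{1/2}\Bigr\|_{L^p(w)}.
\]
As $p > s$ gives $A_{p/s} \subset A_p$, and since $S_J T_m f = T_{m|_J} S_J f$ once $m|_J$ is extended by zero off $J$, this specializes to
\[
  \|T_m f\|_{L^p(w;Y)} \lesssim \Bigl\|\Bigl(\sum_{J \in \Delta} \|T_{m|_J} S_J f\|_Y^2\Bigr)^{1/2}\Bigr\|_{L^p(w)}.
\]

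The second step is a pointwise bound using the atomic structure. Set $\Lambda := \|m\|_{R^s(\Delta;\mc{L}_b(X,Y))}$. By density it suffices to treat $m$ that is a step function with finitely many pieces on each $J$. Given $\varepsilon > 0$, for each $J$ I would pick a finite atomic decomposition $m|_J = \sum_{k=1}^{K_J} \lambda_k^J a_k^J$ with $\sum_k |\lambda_k^J| \leq (1+\varepsilon)\Lambda$ and atoms $a_k^J = \sum_{I \in \mc{I}_k^J} c_{I,k}^J \mathbf{1}_I$ satisfying $\bigl(\sum_I \|c_{I,k}^J\|^s\bigr)^{1/s} \leq 1$. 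Let $\mc{P}^J$ be a finite partition of $J$ into intervals refining each $\mc{I}_k^J$, and rewrite $m|_J = \sum_{P \in \mc{P}^J} d_{P,J} \mathbf{1}_P$. Minkowski's inequality in $\ell^s$ yields
\[
  \Bigl(\sum_{P \in \mc{P}^J} \|d_{P,J}\|^s\Bigr)^{1/s} \leq \sum_k |\lambda_k^J| \Bigl(\sum_{P \in \mc{P}^J} \|c_{P,k}^J\|^s\Bigr)^{1/s} \leq (1+\varepsilon)\Lambda,
\]
so Hölder's inequality with exponents $s,s'$ applied to $T_{m|_J} S_J f(x) = \sum_{P \in \mc{P}^J} d_{P,J} S_P f(x)$ produces the pointwise bound
\[
  \|T_{m|_J} S_J f(x)\|_Y \leq (1+\varepsilon)\Lambda \Bigl(\sum_{P \in \mc{P}^J} \|S_P f(x)\|_X^{s'}\Bigr)^{1/s'}.
\]

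Finally, the collection $\mc{I} := \bigcup_{J \in \Delta} \mc{P}^J$ is a family of mutually disjoint intervals, and Proposition \ref{prop:LPRmod-latHilbert} applied with $q = s' \in [2,\infty)$ (so that $q' = s$) gives
\[
  \|T_m f\|_{L^p(w;Y)} \lesssim (1+\varepsilon)\Lambda\, \inc_{p,s}([w]_{A_{p/s}}) \|f\|_{L^p(w;X)};
\]
letting $\varepsilon \to 0$ concludes. The main obstacle is the second step: estimating $\|T_{m|_J} S_J f(x)\|_Y$ atom-by-atom would yield a sum over $k$ of $\ell^{s'}$-norms indexed by \emph{different} collections $\mc{I}_k^J$, which do not assemble into a single disjoint family and so resist a direct application of Proposition \ref{prop:LPRmod-latHilbert}. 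Passing to a common refinement $\mc{P}^J$ collapses all atoms on $J$ into one ``effective'' atom indexed by one disjoint family, at the cost of only a factor $(1+\varepsilon)\Lambda$ absorbed by Minkowski.
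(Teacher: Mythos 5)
Your first step is fine: the weighted Littlewood--Paley square function estimate in a Hilbert space $Y$ reduces the problem to bounding $\bigl\|\bigl(\sum_{J}\|T_{m|_J}S_J f\|_Y^2\bigr)^{1/2}\bigr\|_{L^p(w)}$, and the final application of Proposition~\ref{prop:LPRmod-latHilbert} with $q=s'$ is the right endgame. The gap is in the common-refinement step. You claim that after passing to the refined partition $\mc{P}^J$ of $J$ and writing $m|_J=\sum_{P\in\mc{P}^J}d_{P,J}\mathbf{1}_P$, Minkowski gives
$\bigl(\sum_{P\in\mc{P}^J}\|d_{P,J}\|^s\bigr)^{1/s}\leq\sum_k|\lambda_k^J|\bigl(\sum_{P\in\mc{P}^J}\|c_{P,k}^J\|^s\bigr)^{1/s}\leq(1+\varepsilon)\Lambda$.
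The first inequality is correct, but the second fails: if $\tilde c^{J}_{P,k}$ denotes the constant value of $a_k^J$ on $P\in\mc{P}^J$, then each original coefficient $c_{I,k}^J$ is repeated once for every $P\subset I$ in the refinement, so
$\sum_{P\in\mc{P}^J}\|\tilde c^{J}_{P,k}\|^s=\sum_{I\in\mc{I}_k^J}\#\{P\in\mc{P}^J:P\subset I\}\cdot\|c_{I,k}^J\|^s$,
which is unbounded as the refinement gets finer. A concrete counterexample: take $a_1^J=c\,\mathbf{1}_J$ with $\|c\|=1$, and $a_2^J$ an atom subordinated to a partition of $J$ into $n$ equal pieces; then $\mc{P}^J$ has $n$ elements and $\bigl(\sum_P\|\tilde c^{J}_{P,1}\|^s\bigr)^{1/s}=n^{1/s}\to\infty$. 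The effective coefficient vector $(d_{P,J})_P$ is therefore \emph{not} an $\ell^s$-normalized atom up to the factor $\Lambda$, so the pointwise bound $\|T_{m|_J}S_J f(x)\|_Y\leq(1+\varepsilon)\Lambda\bigl(\sum_P\|S_Pf(x)\|_X^{s'}\bigr)^{1/s'}$ is unjustified.

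Moreover, the obstacle you name at the end is not in fact the real one. Proceeding atom-by-atom gives $\|T_{m|_J}S_Jf(x)\|_Y\leq\sum_k|\lambda_k^J|\bigl(\sum_{I\in\mc{I}_k^J}\|S_If(x)\|_X^{s'}\bigr)^{1/s'}$, and for each \emph{fixed} $k$ the union $\bigcup_{J\in\Delta}\mc{I}_k^J$ \emph{is} a mutually disjoint family of intervals, so Proposition~\ref{prop:LPRmod-latHilbert} applies perfectly well to each $k$ separately -- no merging into a single family is required. What is needed is only that $\lambda_k^J$ be independent of $J$, so that after Hölder one can apply Minkowski's inequality in $L^p(w;\ell^2(\Delta))$ to pull out a single factor $\sum_k|\lambda_k|\leq(1+\varepsilon)\Lambda$ in front of the norm. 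That the atomic decompositions of the $m|_J$ can be chosen with a common coefficient sequence $(\lambda_k)_{k=1}^N$ is exactly the preparatory trick from \cite[Theorem 2.3]{HP06}, and it is the substitute for your refinement device. This is the route the paper takes; your overall architecture is otherwise correct.
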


\begin{proof}
  We only consider the case $s<2$.
  The case $s=2$ is similar, but simpler.
  Fix $\varepsilon>0$ and let $f \in L^p(w;X)$.
  By approximation we may assume that the dyadic Littlewood--Paley decomposition of $f$ has finitely many nonzero terms and set $\Delta_f = \{J \in \Delta: S_Jf \neq 0\}$.
  For each $J \in \Delta_f$ let
  \begin{equation*}
    m|_J = \sum_{k=1}^N \lambda_k a_k^J, \qquad a_k^J = \sum_{I \in \mc{J}_k^J} c_I^{J,k} \mb{1}_I
  \end{equation*}
  be an $R^s(J;\mc{L}_b(X,Y))$-atomic decomposition of the restriction $m|_J$ with $\lambda_k$ independent of $J$ and
  \begin{equation*}
    \sum_{k=1}^N \abs{\lambda_k} \leq (1+\varepsilon)\nrm{m}_{R^\sigma(\Delta;\mc{L}_b(X,Y))}
  \end{equation*}
  as in \cite[Theorem 2.3]{HP06}.
	
  Note that $S_J T_m = T_m S_J$, where we abuse notation by letting $S_J$ denote either the $X$- or $Y$-valued Fourier projection.
  By the Littlewood--Paley estimate (see \cite[Proposition 3.2]{MV15}), H\"older's inequality, Proposition \ref{prop:LPRmod-latHilbert}, and $w \in A_{p/s} \subset A_p$, we have
  \begin{align*}
    \nrm{T_m f}_{L^p(w;Y)} &\leq \inc_p([w]_{A_p}) \nrms{ \has{ \sum_{J \in \Delta_f} \nrm{T_m S_J f}_Y^2 }^{1/2} }_{L^p(w)}\\
                           &\leq \inc_p([w]_{A_p}) \nrms{ \has{\sum_{J \in \Delta_f} \has{ \sum_{k=1}^N \abs{\lambda_k}\sum_{I \in \mc{J}_k^J} \nrm{c_I^{J,k} S_I f}_Y }^2 }^{1/2}}_{L^p(w)}\\
                           &\leq \inc_p([w]_{A_p})   \sum_{k=1}^N |\lambda_k| \,  \nrms{ \has{\sum_{J \in \Delta_f} \bigl( \sum_{I \in \mc{J}^J_k} \nrm{c_I^{J,k}}^s \bigr)^{\frac{2}{s}}\bigl(\sum_{I \in \mc{J}_k^J} \nrm{S_I f}_X^{s^\prime}\bigr)^{\frac{2}{s'}}  }^{\frac12}}_{L^p(w)}  \\
                           & \leq \inc_p([w]_{A_p})  \sum_{k=1}^N |\lambda_k| \, \nrms{ \has{ \sum_{J \in \Delta_f} \has{\sum_{I \in \mc{J}_k^J} \nrm{S_I f}_X^{s^\prime}}^{2/s'} }^{1/2}}_{L^p(w)}\\
                           &\leq \inc_{p,s}([w]_{A_{p/s}})  \sum_{k=1}^N |\lambda_k|  \, \nrm{f}_{L^p(w;X)} .
  \end{align*}
  Since $\varepsilon >0$ was arbitrary this implies
  \begin{equation*}
    \nrm{T_m f}_{L^p(w;Y)} \leq \inc_{p,s}([w]_{A_{p/s}}) \nrm{m}_{R^s(\Delta;\mc{L}_b(X,Y))} \nrm{f}_{L^p(w;X)}
  \end{equation*}
  for all $w \in A_{p/s}$ and $f \in L^p(w;X)$.
\end{proof}

\begin{proof}[Proof of Theorem \ref{thm:HScase}]
  \textbf{Part (i):} We first consider the case $s<p$ and $s<2$.
  Let $w \in A_{p/s}$ and take  $\sigma \in (s,2]$ such that $w \in A_{p/\sigma}$, which is possible by Proposition \ref{prop:muckenhoupt}\eqref{it:mw5}.
  By Lemma \ref{lem:embeddingVHR} we know that $m \in R^\sigma(\Delta;\mc{L}_b(X,Y))$ with
  \begin{equation*}
    \nrm{m}_{R^\sigma(\Delta;\mc{L}_b(X,Y))} \lesssim_{s,\sigma} \nrm{m}_{V^s(\Delta;\mc{L}_b(X,Y))},
  \end{equation*}
  so by Proposition \ref{prop:HScaseProp} we obtain
  \begin{equation*}
    \nrm{T_m}_{\mc{L}(L^p(w;X),L^p(w;Y))} \leq \inc_{p,s}([w]_{A_{p/s}}) \nrm{m}_{V^s(\Delta;\mc{L}_b(X,Y))}.
  \end{equation*}

  Next we consider the case $p>s=2$.
  Observe that by \cite[Proposition 5.3.16]{HNVW16} it suffices to prove the result for the truncated multipliers $$m_N := \mb{1}_{\bigcup_{n = 1}^N J_n} m,$$
  where $\Delta = (J_n)_{n = 1}^\infty$ is an arbitrary ordering of $\Delta$.
  Since $m_N \in V_0^s(\Delta;\mc{L}_b(X,Y))$ uniformly, without loss of generality we may work with an arbitrary decaying multiplier $m \in V_0^s(\Delta;\mc{L}_b(X,Y))$.
  Fix $w \in A_{p/2}$.
  Then by Proposition \ref{prop:muckenhoupt}\eqref{it:mw2} there exists a $\delta >0$ such that $w^{1+\delta} \in A_{p/2}$.
  Take
  \begin{equation*}
    \theta = \frac{2}{p}\has{1-\frac{1}{1+\delta}}, \qquad p_0 = (1+\delta)(1-\theta)p, \quad \text{and} \quad \sigma = 2-\theta.
  \end{equation*}
  Then $\theta \in (0,1)$, $\sigma \in (1,2)$  and $p_0 = p+ (p-2)\delta > p$, so by the first case we have
  \begin{equation*}
    \nrm{T_m}_{\mc{L}(L^{p_0}(w;X),L^{p_0}(w;Y))} \leq \inc_{p_0,\sigma}([w]_{A_{p/2}}) \nrm{m}_{V_0^\sigma(\Delta;\mc{L}_b(X,Y))}.
  \end{equation*}
  Moreover by Plancherel's theorem (which is valid since $X$ and $Y$ are Hilbert spaces) we know that
  \begin{equation}\label{eq:plancherelappl}
    \| T_m\|_{\calL(L^2(\RR;X), L^2(\RR;Y))} \leq \|m\|_{L^\infty(\RR;\mc{L}_b(X,Y))}.
  \end{equation}
  Since
  \begin{equation*}
    \frac{1}{[p_0,2]}_\theta = \frac{1}{p(1+\delta)} + \frac{1}{p} - \frac{1}{p(1+\delta)} =\frac{1}{p},
  \end{equation*}
  we know by \cite[Theorem 1.18.5]{hT78} that $L^{p}(w;X) = [L^{p_0}(w^{1+\delta},X), L^{2}(\RR;X)]_{\theta}$, and likewise with $X$ replaced by $Y$.
  Moreover since $[\sigma,\infty]_\theta=\frac{2-\theta}{1-\theta}>2$ we have the continuous inclusions
  \begin{align*}
    V^2(\Delta;\mc{L}_b(X,Y)) &\hookrightarrow [V_0^\sigma(\Delta;\mc{L}_b(X,Y)), V^\infty_0(\Delta;\mc{L}_b(X,Y))]_\theta \\&\hookrightarrow [V_0^\sigma(\Delta;\mc{L}_b(X,Y)), L^\infty(\RR;\mc{L}_b(X,Y))]_\theta
  \end{align*}
  by Theorem \ref{thm:VR-interpoln}.
  By bilinear complex interpolation \cite[\textsection 4.4]{BL76} applied to the bilinear map $(m,f) \mapsto T_mf$ we have boundedness of $T_m \colon L^{p}(w;X) \to L^{p}(w;Y)$ with the required norm estimate.

  Finally we consider the case $p=s \geq 2$; we will use another interpolation argument.
  Fix $w\in A_1$.
  Then by Proposition \ref{prop:muckenhoupt}\eqref{it:mw2} there exists a $\delta>0$ such that $w^{1+\delta}\in A_1$.
  Fix $p_1\in (s, s+(s-1)\delta)$.
  By the argument of the previous cases we have
  \begin{equation*}
    \|T_m\|_{\calL(L^{p_1}(w^{1+\delta};X), L^{p_1}(w^{1+\delta};Y))} \leq \inc_{p_1,s}([w]_{A_1}) \nrm{m}_{V^s(\Delta;\mc{L}_b(X,Y))}.
  \end{equation*}
  Let $\theta\in (0,1)$ be such that $\theta(1+\delta) s = p_1$; such a $\theta$ exists since $p_1<s+(s-1)\delta$.
  Choose $p_0\in (1, s)$ such that $[p_0,p_1]_{\theta} = s$.
  Such a $p_0$ exists since $p_1>s$ and $[1,p_1]_{\theta}<s$.
  Indeed, the latter follows from
  \begin{equation*}
    \frac{s}{[1,p_1]_{\theta}} = s(1-\theta)+s\frac{\theta}{p_1} =  s - \frac{p_1}{1+\delta} + \frac{1}{1+\delta}  > 1.
  \end{equation*}
  Since $p_0<s \leq 2$ we have by duality with the previous cases (taking $w=1$) that
  \[\|T_m\|_{\calL(L^{p_0}(\RR;X),L^{p_0}(\RR;Y))}\lesssim_{p_0,s}\nrm{m}_{V^s(\Delta;\mc{L}_b(X,Y))}.\]
  As before our choice of $\theta$ yields $L^{s}(w;X) = [L^{p_0}(\RR,X), L^{p_1}(w^{1+\delta};X)]_{\theta}$, and likewise with $X$ replaced by $Y$.
  Therefore by complex interpolation we have boundedness of $T_m \colon L^{s}(w;X) \to L^{s}(w;Y)$ with the required norm estimate.

\bigskip

\textbf{Part (ii):}
The case $p=2$ is clear from \eqref{eq:plancherelappl} and the embedding of the $V^s$ classes in $L^\infty$.
For $p>2$ we may assume without loss of generality that $m \in V_0^s(\Delta;\mc{L}_b(X,Y))$ as in part (i).
Moreover, by embedding of the $V^s$ classes, we may assume that $s >2$.

Let $\sigma \in \bigl(s, \bigl(\frac12-\frac1p\bigr)^{-1}\bigr)$ and fix $t \in (2,\infty)$ such that $[2,t]_{\frac{\sigma}{2}} = p$. Such a $t$ exists since $p>2$ and
  \begin{equation*}
    \frac{1}{p} = \frac{1}{[2,t]_{\frac{\sigma}{2}}} = \frac{1}{2} - \frac{1}{\sigma} + \frac{2}{\sigma} \frac{1}{t},
  \end{equation*}
  which implies that
  \begin{equation*}
  \frac{1}{t} = \frac{2}{s}\has{\frac{1}{p}+\frac{1}{\sigma}-\frac12}>0.
  \end{equation*}
  Using the boundedness properties
  \begin{align*}
    V_0^\infty(\Delta;\mc{L}_b(X,Y)) &\times L^2(\RR;X)\to L^2(\RR;Y) \quad \text{and} \\
    V_0^2(\Delta;\mc{L}_b(X,Y)) &\times L^t(\RR;X)\to L^t(\RR;Y)
  \end{align*}
  of the bilinear map $(m,f) \mapsto T_mf$, which follow from \eqref{eq:plancherelappl} and part (i) respectively, we have boundedness of $T_m \colon L^{p}(w;X) \to L^{p}(w;Y)$ with the required norm estimate by bilinear complex interpolation  \cite[\textsection 4.4]{BL76}.
  Here we use \cite[Theorem 1.18.4]{hT78} and Theorem \ref{thm:VR-interpoln} to identify the interpolation spaces as before.
  The case $p<2$ follows by a duality argument.
\end{proof}

\begin{rmk}\
\begin{enumerate}
\item If the multiplier is scalar-valued and $X = Y$, then Theorem \ref{thm:HScase} follows simply from the scalar case and a standard Hilbert space tensor extension argument (see \cite[Theorem 2.1.9]{HNVW16}).
\item As in \cite[Theorem A]{sK14}, a weighted version of Theorem \ref{thm:HScase}\eqref{it:HScase2} can be proved, but we omit it to prevent things from getting too complicated.
\end{enumerate}
\end{rmk}

\subsection{Multipliers in UMD Banach function spaces} \label{subs:multi-banach}
We now turn to our main result (Theorem \ref{thm:mult-s-var}).
Its proof is inspired by that of \cite[Theorem 2.3]{HP06}, which is a generalisation of the Hilbert space result in Theorem \ref{thm:HScase}.
Besides the regularity assumption on the multiplier as in the Hilbert space case, we will need an $\ell^2(\ell^q)$-boundedness assumption. We first prove a result for $R$-class multipliers, analogous to Proposition \ref{prop:HScaseProp}.

\begin{prop}\label{prop:main-mult}
  Let $q \in (1,2]$, $p \in (q, \infty)$, and $w \in A_{p/q}$.
  Let $X$ and $Y$ be Banach function spaces with $X^q \in \UMD$ and $Y \in \UMD$.
  Let $\mc{T} \subset \mc{L}_b(X,Y)$ be absolutely convex and $\el{2}{q^\prime}$-bounded, and suppose $m \in R^q(\Delta;\mc{T})$.
  Then
  \begin{equation*}
    \nrm{T_m}_{\mc{L}(L^p(w;X), L^p(w;Y))} \leq  \inc_{X,Y,p,q}([w]_{A_{p/q}}) \elR{2}{q^\prime}{\mc{T}} \nrm{m}_{R^q(\Delta;\mc{T})}.
  \end{equation*}
\end{prop}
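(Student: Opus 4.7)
The plan is to follow the Hilbert-space argument from Proposition \ref{prop:HScaseProp}, replacing Hilbert dualities and operator-norm estimates by the lattice structure of $X$, $Y$ and the $\el{2}{q'}$-boundedness of $\mc{T}$. First I would reduce, by truncation and approximation, to the case where $f$ has only finitely many nonzero dyadic Littlewood--Paley pieces, indexed by $\Delta_f\subset\Delta$. For each $J\in\Delta_f$ fix an $R^q(J;\mc{T})$-atomic decomposition
\[
m|_J=\sum_{k=1}^N\lambda_k a_k^J,\qquad a_k^J=\sum_{I\in\mc{I}_k^J}c_I^{J,k}\mb{1}_I,\qquad \sum_{I}\nrm{c_I^{J,k}}_{\mc{T}}^q\le 1,
\]
with the $\lambda_k$ independent of $J$ and $\sum_k|\lambda_k|\le(1+\varepsilon)\nrm{m}_{R^q(\Delta;\mc{T})}$, as in \cite[Theorem 2.3]{HP06}. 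Writing $T_I^{J,k}:=c_I^{J,k}/\nrm{c_I^{J,k}}_{\mc{T}}\in\mc{T}$ (up to an innocuous $(1+\varepsilon)$-factor if $\mc{T}$ is not closed) turns $T_m S_J f$ into a sum of elements of $\mc{T}$ applied to the pieces $S_I f$.

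Next, since $Y$ is a $\UMD$ Banach function space, the weighted lattice Littlewood--Paley inequality (as in \cite[Proposition 3.2]{MV15}, available for $w\in A_p\supset A_{p/q}$) gives
\[
\nrm{T_m f}_{L^p(w;Y)}\lesssim \nrms{\has{\sum_{J\in\Delta_f}|T_m S_J f|^2}^{1/2}}_{L^p(w;Y)}.
\]
Because $S_J T_m=T_m S_J$, applying the pointwise triangle inequality in $Y$ followed by Hölder's inequality in the index $I$ with exponents $(q,q')$ and using $\sum_I\nrm{c_I^{J,k}}_{\mc{T}}^q\le 1$ produces
\[
\has{\sum_{J}|T_m S_J f|^2}^{1/2}\le\sum_{k=1}^N|\lambda_k|\has{\sum_{J}\has{\sum_{I\in\mc{I}_k^J}|T_I^{J,k}S_I f|^{q'}}^{2/q'}}^{1/2}.
\]

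For each fixed $x\in\RR$ the $\el{2}{q'}$-boundedness of $\mc{T}$, applied with the double index $(J,I)$ to the $X$-valued function $x\mapsto S_I f(x)$ and the operators $T_I^{J,k}$, dominates the $Y$-norm of the right-hand side by $\elR{2}{q'}{\mc{T}}$ times the same expression with each $|T_I^{J,k}S_I f|$ replaced by $|S_I f|$ and $\nrm{\cdot}_Y$ replaced by $\nrm{\cdot}_X$. Taking the $L^p(w)$-norm in $x$ and invoking Theorem \ref{thm:LPRmod-lat} with the parameter ``$q$'' there set equal to $q'\in[2,\infty)$ — legitimate since $X^{(q')'}=X^q\in\UMD$ and $w\in A_{p/(q')'}=A_{p/q}$ — controls the result by $\inc_{X,p,q}([w]_{A_{p/q}})\nrm{f}_{L^p(w;X)}$. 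Summing over $k$, using the bound on $\sum_k|\lambda_k|$, and letting $\varepsilon\to 0$ yields the claim. The main technical obstacle is the choreography of Step 2: the pointwise (Minkowski + Hölder) estimate, the pointwise (in $x$) application of $\el{2}{q'}$-boundedness, and the integrated LPR estimate of Theorem \ref{thm:LPRmod-lat} must fit together so that the indices $(J,I)$ that Theorem \ref{thm:LPRmod-lat} demands on the outer square function match exactly those produced by the atomic decomposition, which is precisely why $R^q$-class regularity (rather than the weaker $V^q$-class) is the natural hypothesis for this intermediate proposition.
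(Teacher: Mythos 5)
Your argument is correct and follows the paper's proof essentially step for step: finite Littlewood--Paley reduction, $R^q$-atomic decomposition with $J$-independent coefficients, the weighted lattice Littlewood--Paley inequality for $Y \in \UMD$, pointwise triangle inequality plus Hölder with exponents $(q,q')$ against the atom normalisation, pointwise-in-$x$ application of $\el{2}{q'}$-boundedness, and finally Theorem~\ref{thm:LPRmod-lat} with parameter $q'$ (valid since $X^{(q')'}=X^q\in\UMD$ and $w\in A_{p/q}$). The only cosmetic deviation is the citation for the lattice Littlewood--Paley step (the paper uses \cite[Proposition 6.1]{ALV1} here, reserving \cite[Proposition 3.2]{MV15} for the Hilbert-space case), and your explicit $(1+\varepsilon)$-remark on the Minkowski normalisation is actually slightly more careful than the paper's own phrasing.
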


\begin{proof}
  Fix $\varepsilon>0$ and let $f \in L^p(w;X)$.
  We begin as in the proof of Proposition \ref{prop:HScaseProp}, which began as in the proof of \cite[Theorem 2.3]{HP06}: we assume that the dyadic Littlewood--Paley decomposition of $f$ has finitely many nonzero terms and set $\Delta_f = \{J \in \Delta: S_Jf \neq 0\}$.
  For each $J \in \Delta_f$ let
  \begin{equation*}
    m|_J = \sum_{k=1}^N \lambda_k a_k^J, \qquad a_k^J = \sum_{I \in \mc{J}_k^J} c_I^{J,k} \mb{1}_I
  \end{equation*}
  be a $R^q(J;\mc{T})$-atomic decomposition of the restriction $m|_J$ with $\lambda_k$ independent of $J$, with each $\mc{J}_k^J$ finite, and with
  \begin{equation*}
    \sum_{k=1}^N \abs{\lambda_k} \leq (1+\varepsilon)\nrm{m}_{R^q(\Delta;\mc{L}_b(X,Y))}.
  \end{equation*}

  As before, $S_J T_m = T_m S_J$.
  By the Littlewood--Paley theorem for UMD Banach function spaces (see \cite[Proposition 6.1]{ALV1}), using that $Y \in \UMD$ and $w \in A_{p/q} \subset A_p$, we have
  \begin{align*}
    \nrm{T_m f}_{L^p(w;Y)} &\leq \inc_{Y,p}([w]_{A_p}) \nrms{ \has{ \sum_{J \in \Delta_f} |T_m S_J f|^2 }^{1/2} }_{L^p(w;Y)}\\
                           &= \inc_{Y,p}([w]_{A_p}) \nrms{ \has{ \sum_{J \in \Delta_f} \abss{ \sum_{k=1}^N \lambda_k \sum_{I \in \mc{J}_k^J} c_I^{J,k} S_I f }^2 }^{1/2} }_{L^p(w;Y)} \\
                           &\leq \inc_{Y,p}([w]_{A_p}) \sum_{k=1}^N |\lambda_k| \nrms{ \has{ \sum_{J \in \Delta_f} \abss{ \sum_{I \in \mc{J}_k^J} c_I^{J,k} S_I f }^2 }^{1/2} }_{L^p(w;Y)}.
  \end{align*}
  We estimate the sum on the right hand side by
  \begin{align*}
    &\sum_{k=1}^N |\lambda_k| \nrms{ \has{ \sum_{J \in \Delta_f} \abss{ \sum_{I \in \mc{J}_k^J} c_I^{J,k} S_I f }^2 }^{1/2} }_{L^p(w;Y)} \\
    &\leq \sum_{k=1}^N |\lambda_k| \biggl\| \biggl( \sum_{J \in \Delta_f} \has{ \bigl(\sum_{I \in \mc{J}_k^J} \nrm{c_I^{J,k}}_{\mc{T}}^q \bigr)^{1/q} \bigl( \sum_{I \in \mc{J}_k^J} \abss{\frac{c_I^{J,k}S_I f}{\nrm{c_I^{J,k}}_{\mc{T}}} }^{q^\prime} \bigr)^{1/q^\prime} }^2 \biggr)^{1/2} \biggr\|_{L^p(w;Y)} \\
    &\leq \sum_{k=1}^N |\lambda_k| \biggl\| \biggl( \sum_{J \in \Delta_f} \has{ \sum_{I \in \mc{J}_k^J} \abss{\frac{c_I^{J,k}}{\nrm{c_I^{J,k}}_{\mc{T}}} S_I f}^{q^\prime} }^{2/q^\prime}  \biggr)^{1/2} \biggr\|_{L^p(w;Y)}.
  \end{align*}
  By the definition of the Minkowski norm, the operators $c_I^{J,k}/\nrm{c_I^{J,k}}_\mc{T}$ all lie in $\mc{T}$, so by $\el{2}{q^\prime}$-boundedness of $\mc{T}$ we have
  \begin{align*}
    &\nrm{T_m f}_{L^p(w;Y)} \\
    &\leq \inc_{Y,p}([w]_{A_p}) \elR{2}{q^\prime}{\mc{T}} \sum_{k=1}^N  |\lambda_k| \nrms{ \has{ \sum_{J \in \Delta_f} \big( \sum_{I \in \mc{J}_k^J}  |S_I f|^{q^\prime} \big)^{2/q^\prime} }^{1/2} }_{L^p(w;X)}.
  \end{align*}
  By Theorem \ref{thm:LPRmod-lat},
  \begin{align*}
    \nrms{ \has{ \sum_{J \in \Delta_f} \big( \sum_{I \in \mc{J}_k^J}  |S_I f|^{q^\prime} \big)^{2/q^\prime} }^{1/2} }_{L^p(w;X)}
    &\leq \inc_{X,p,q}([w]_{A_{p/q}}) \nrm{f}_{L^p(w;X)}.
  \end{align*}
  Since $\sum_{k=1}^N  |\lambda_k| \leq (1+\varepsilon) \nrm{m}_{R^q(\Delta;\mc{T})}$ and $\varepsilon > 0$ was arbitrary, this finishes the proof.
\end{proof}

Our main multiplier theorem follows easily.
Recall that $w \in \alpha_{p,q}$ if and only if $w^{1-p'} \in A_{p'/q'}$ with $[w]_{\alpha_{p,q}} := [w^{1-p^\prime}]_{A_{p^\prime/q^\prime}}$.

\begin{thm}\label{thm:mult-s-var}
  Let $X$ and $Y$ be Banach function spaces, and let $\mc{T} \subset \mc{L}_b(X,Y)$ be absolutely convex. Let $q \in (1,2]$, $s \in [1,q)$ and $m \in V^s(\Delta;\mc{T})$.
  \begin{enumerate}[(i)]
  \item \label{it:mult-s1}
    Suppose that $X^q \in \UMD$, $Y \in \UMD$, and $\mc{T}$ is $\el{2}{q^\prime}$-bounded. Then for all $p \in (q,\infty)$ and $w \in A_{p/q}$ we have
    \begin{equation*}
      \nrm{T_m}_{\mc{L}(L^p(w;X), L^p(w;Y))} \leq \inc_{X,Y,p,q}([w]_{A_{p/q}}) \elR{2}{q^\prime}{\mc{T}} \nrm{m}_{V^s(\Delta;\mc{T})}.
    \end{equation*}
  \item \label{it:mult-s2}
    Suppose that $X \in \UMD$, $(Y^*)^{q} \in \UMD$, $\mc{T}$ is $\el{2}{q}$-bounded, and $m \in V^{s}(\Delta;\mc{T})$.
    Then for all  $p \in (1, q')$ and  $w \in \alpha_{p,q'}$ we have
    \begin{equation*}
      \nrm{T_m}_{\mc{L}(L^p(w;X), L^p(w;Y))} \leq \inc_{X,Y,p,q}([w]_{\alpha_{p,q}}) \elR{2}{q}{\mc{T}} \nrm{m}_{V^{s}(\Delta;\mc{T})}.
    \end{equation*}
  \end{enumerate}
\end{thm}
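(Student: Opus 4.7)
The plan is to reduce part (i) to Proposition \ref{prop:main-mult} via Lemma \ref{lem:embeddingVHR}, and then to reduce part (ii) to part (i) by duality.

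For part (i), since $1 \leq s < q$, Lemma \ref{lem:embeddingVHR}(i) applied on each dyadic interval $J \in \Delta$ gives the embedding $V^s(\Delta;\mc{T}) \hookrightarrow R^q(\Delta;\mc{T})$ with $\nrm{m}_{R^q(\Delta;\mc{T})} \lesssim_{s,q} \nrm{m}_{V^s(\Delta;\mc{T})}$. All remaining hypotheses of Proposition \ref{prop:main-mult} (namely $X^q \in \UMD$, $Y \in \UMD$, $\mc{T}$ absolutely convex and $\el{2}{q'}$-bounded, $p \in (q,\infty)$, and $w \in A_{p/q}$) coincide with the hypotheses of (i), so Proposition \ref{prop:main-mult} yields the required bound directly.

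For part (ii), I would apply part (i) to the adjoint system $\wt{X} := Y^*$, $\wt{Y} := X^*$, $\wt{\mc{T}} := \mc{T}^* = \{T^* : T \in \mc{T}\}$, $\wt{m}(\xi) := m(\xi)^*$, with exponent $\wt{p} := p'$ and weight $\wt{w} := w^{1-p'}$. The conditions $p \in (1,q')$ and $w \in \alpha_{p,q'}$ translate to $\wt{p} \in (q,\infty)$ and $\wt{w} \in A_{\wt{p}/q}$. The $\UMD$ hypothesis $\wt{X}^q = (Y^*)^q \in \UMD$ is given, and $\wt{Y} = X^* \in \UMD$ follows from self-duality of $\UMD$. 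By Proposition \ref{prop:lrsdual}, $\wt{\mc{T}}$ is absolutely convex and $\el{2}{q'}$-bounded (since $\mc{T}$ is $\el{2}{q}$-bounded). Taking pointwise adjoints is an isometry between $\spn(\mc{T})$ and $\spn(\mc{T}^*)$ under the Minkowski norms, so $\wt{m} \in V^s(\Delta;\wt{\mc{T}})$ with the same norm as $m$. Part (i) then yields boundedness of $T_{m^*} \colon L^{p'}(w^{1-p'}; Y^*) \to L^{p'}(w^{1-p'}; X^*)$.

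To transfer this back to $T_m$, a standard Parseval computation identifies $T_{m^*}$, up to reflection of the frequency variable, with the Banach-space adjoint of $T_m$ in the duality pairing $\int \ip{\cdot,\cdot}$. Since $\UMD$ implies reflexivity we have $(L^p(w;X))^* = L^{p'}(w^{1-p'};X^*)$ and analogously for $Y$, and the $A_{p'/q}$ class is invariant under reflection of the weight, so the boundedness of $T_{m^*}$ transfers to the desired bound for $T_m$. The main obstacle is making this duality step rigorous --- verifying that $\mc{T}^*$ inherits absolute convexity and $\el{2}{q'}$-boundedness, identifying the Banach-space adjoint of $T_m$ as $T_{m^*}$ (up to reflection) in the operator-valued weighted setting, and invoking reflexivity to describe the duals of the Bochner spaces --- all standard but requiring care in the combined operator-valued, vector-valued, weighted context.
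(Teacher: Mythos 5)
Your proposal is correct and follows essentially the same route as the paper's proof: part (i) via Lemma \ref{lem:embeddingVHR} plus Proposition \ref{prop:main-mult}, and part (ii) by the standard duality reduction to part (i) applied to $m^*$ on $L^{p'}(w^{1-p'};Y^*)\to L^{p'}(w^{1-p'};X^*)$, using Proposition \ref{prop:lrsdual} for the $\ell^2(\ell^{q'})$-boundedness of $\mc{T}^*$. You in fact spell out the bookkeeping (translation of the weight class, reflexivity for the dual Bochner spaces, isometry of the Minkowski norms under adjoint, and the frequency reflection) more explicitly than the paper's terse ``standard duality argument.''
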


\begin{proof}
The first part follows directly from Proposition \ref{prop:main-mult} and Lemma \ref{lem:embeddingVHR}. For the second part a standard duality argument shows that
\begin{equation*}
  \nrm{T_m}_{\mc{L}(L^p(w;X),L^p(w;Y))} \leq \nrm{T_{m^*}}_{\mc{L}(L^{p^\prime}(w^{1-p^\prime};Y^*), L^{p^\prime}(w^{1-p^\prime};X^*))},
\end{equation*}
with $\map{m^*}{\RR}{\spn (\mc{T}^*)}$ defined by $m^*(t) = m(t)^*$ for all $t \in \RR$. Applying the first part to $m^*$, using Proposition \ref{prop:lrsdual} to show that $\mc{T^*}$ is $\el{2}{q'}$-bounded and noting that $m^* \in V^{q}(\Delta;\mc{T}^*)$, completes the proof.
\end{proof}

If $q=2$ and $w=1$ in Theorem \ref{thm:mult-s-var}, we recover \cite[Corollary 2.5]{HP06} for Banach function spaces, except for the endpoint $p=2$, which is missing since we worked in the weighted setting. If the multiplier is scalar-valued and $X = Y$, the result was proved in \cite{ALV1} using vector-valued extrapolation.

\begin{rmk}\label{rem:Rnecessity}
The $\el{2}{q^\prime}$-boundedness assumption in Theorem \ref{thm:mult-s-var} arises naturally from the proof.
It is known that boundedness of $T_m$ implies $\mc{R}$-boundedness---and thus $\ell^2$-boundedness if $X$ has finite cotype---of the image of the Lebesgue points of $m$ (see \cite{ClePru01} or \cite[Theorem 5.3.15]{HNVW16}).
However, $\el{2}{q'}$-boundedness is not necessary, as may be seen by considering $m = n S$ where $n\in R^q(\Delta)$ is a scalar multiplier and $S\colon X\to Y$ is a bounded linear operator.
In this case $T_m$ will be bounded, but $\{S\}$ need not be $\el{2}{q'}$-bounded for $q\neq 2$ (see \cite[Example 2.16]{KU14}).
\end{rmk}

Using complex interpolation, the reverse H\"older inequality, and the openness of the $\UMD$ property, we can obtain a result for the endpoint $p=q=s$ in Theorem \ref{thm:mult-s-var}.

\begin{prop}\label{prop:A1-est}
	Let $X$ and $Y$ be Banach function spaces.
	Let $q,r \in (1,2)$ and suppose that $X^q \in \UMD$ and $(Y^*)^r \in \UMD$.
        Let $\mc{T} \subset \mc{L}(X,Y)$ be absolutely convex and both $\el{2}{q'}$- and $\el{2}{r}$-bounded.
        Let $s = \min\{q,r\}$ and suppose that $m \in V^{s}(\Delta;\mc{T})$.
	Then for all $w \in A_1$,
	\begin{equation*}
          \nrm{T_m}_{\mc{L}(L^q(w;X),L^q(w;Y))} \leq \inc_{X,Y,q,r}([w]_{A_1}) \max\{\elR{2}{q'}{\mc{T}},\elR{2}{r}{\mc{T}}\} \nrm{m}_{V^{s}(\Delta;\mc{T})}.
	\end{equation*}
      \end{prop}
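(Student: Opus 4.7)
My plan is to derive the endpoint $p=q$ by complex interpolation between Theorem~\ref{thm:mult-s-var}(i) applied at some $p_0>q$ and Theorem~\ref{thm:mult-s-var}(ii) applied at some $p_1<q$, with parameters chosen so that the interpolated pair reproduces $L^q(w;X)$ and $L^q(w;Y)$. Using the openness of $\UMD$ from Section~\ref{subs:UMD}, pick $q_1\in(q,2)$ with $X^{q_1}\in\UMD$ and $r_1\in(r,2)$ with $(Y^*)^{r_1}\in\UMD$, with $q_1-q$ as small as we wish. Since $s=\min\{q,r\}<\min\{q_1,r_1\}$, the multiplier $m\in V^s(\Delta;\mc{T})$ is admissible input for both halves of Theorem~\ref{thm:mult-s-var} at parameters $q_1$ and $r_1$; and by Corollary~\ref{cor:lrstriangle}, the $\el{2}{q'}$- and $\el{2}{r}$-boundedness of $\mc{T}$ upgrades to $\el{2}{q_1'}$- and $\el{2}{r_1}$-boundedness (using $2\le q_1'\le q'$ and $r\le r_1\le 2$).

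By the reverse H\"older inequality (Proposition~\ref{prop:muckenhoupt}\eqref{it:mw2}), fix $\delta>0$, depending on $[w]_{A_1}$, such that $w^{1+\delta}\in A_1$. For a parameter $\theta\in(0,1)$ to be chosen, define
\[
  p_0:=q(1+\delta)(1-\theta),\qquad p_1:=\frac{q(1+\delta)\theta}{\delta}.
\]
A direct computation gives $1/q=(1-\theta)/p_0+\theta/p_1$ and $w^{1/q}=(w^{1+\delta})^{(1-\theta)/p_0}$, so in the weighted interpolation formula \cite[Theorem 1.18.5]{hT78} with $w_0=w^{1+\delta}$, $w_1=1$ and endpoint exponents $p_0,p_1$, the $\theta$-midpoint is exactly $L^q(w;\cdot)$. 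The constraints $p_0>q_1$, $p_1>1$, and $p_1<r_1'$ cut out a nonempty open interval for $\theta$ precisely when $q_1-q<(q-1)\delta$, which we arrange by shrinking $q_1$ further if necessary.

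With these choices, Theorem~\ref{thm:mult-s-var}(i) at parameters $(q_1,p_0)$ with weight $w^{1+\delta}\in A_1\subset A_{p_0/q_1}$ gives boundedness of $T_m$ from $L^{p_0}(w^{1+\delta};X)$ to $L^{p_0}(w^{1+\delta};Y)$, while Theorem~\ref{thm:mult-s-var}(ii) at parameters $(r_1,p_1)$ with the trivial weight $1\in\alpha_{p_1,r_1'}$ gives $T_m$ from $L^{p_1}(\RR;X)$ to $L^{p_1}(\RR;Y)$; both operator norms are bounded by $\inc_{X,Y,q,r}([w]_{A_1})\,\max\{\elR{2}{q'}{\mc{T}},\elR{2}{r}{\mc{T}}\}\,\|m\|_{V^s(\Delta;\mc{T})}$. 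Since $\UMD$ Banach function spaces are reflexive and hence order continuous, the Banach-function-space-valued version of \cite[Theorem 1.18.5]{hT78} used in the proof of Theorem~\ref{thm:HScase} identifies $[L^{p_0}(w^{1+\delta};X),L^{p_1}(\RR;X)]_\theta=L^q(w;X)$ (and likewise for $Y$), so bilinear complex interpolation of $(m,f)\mapsto T_m f$ delivers the claimed bound.

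The main technical obstacle is the simultaneous feasibility of the parameter constraints and the bookkeeping of constants: the open conditions on $\theta$ all reduce to the single inequality $q_1-q<(q-1)\delta$, but tracking the dependence of the final norm on $[w]_{A_1}$ through the quantitative form of reverse H\"older (which controls both $\delta^{-1}$ and $[w^{1+\delta}]_{A_1}$ in terms of $[w]_{A_1}$) requires some care to absorb into the $\inc_{X,Y,q,r}([w]_{A_1})$ notation.
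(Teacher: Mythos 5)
Your proposal is correct and follows essentially the same interpolation strategy as the paper: reverse H\"older (Proposition \ref{prop:muckenhoupt}\eqref{it:mw2}) to get $w^{1+\delta}\in A_1$, openness of $\UMD$ to push the convexity parameters slightly past $q$ and $r$, Corollary \ref{cor:lrstriangle} to transfer the $\el{2}{q'}$- and $\el{2}{r}$-bounds to the perturbed parameters, the two halves of Theorem \ref{thm:mult-s-var} at a weighted endpoint $p>q$ and an unweighted endpoint $p<q$, and then the Stein--Weiss complex interpolation identity for weighted Bochner spaces to land on $L^q(w;X)\to L^q(w;Y)$. Your parameter bookkeeping (the explicit formulas for $p_0$, $p_1$, the feasibility condition $q_1-q<(q-1)\delta$) matches the paper's implicit choices, and the only cosmetic difference is that the paper interpolates the single operator $T_m$ while you invoke bilinear interpolation, which works equally well here since $m$ is fixed.
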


      \begin{proof}
        Fix $w \in A_1$, so that by Proposition \ref{prop:muckenhoupt}\eqref{it:mw2} there exists an $\delta>0$ such that $w^{1+\delta} \in A_1$.
        By the openness of the $\UMD$ property we know that there exist $q_0 \in (q,q+(q-1)\delta)$ and $r_0 \in (r,\infty)$ such that $X^{q_0}, (Y^*)^{r_0} \in \UMD$.
        By Corollary \ref{cor:lrstriangle} we know that $\mc{T}$ is $\el{2}{q_0'}$- and $\mc{T}$ is $\el{2}{r_0'}$-bounded with
        \begin{equation}\label{eq:r0q0rq}
          \elR{2}{q_0'}{\mc{T}} \leq \elR{2}{q'}{\mc{T}} \quad \text{and} \quad \elR{2}{r_0'}{\mc{T}} \leq \elR{2}{r'}{\mc{T}}.
        \end{equation}
        Fix $p_1 \in (q_0, q+(q-1)\delta)$.
        By Theorem \ref{thm:mult-s-var}\eqref{it:mult-s1} and \eqref{eq:r0q0rq} we know that
        \begin{align*}
          \nrm{T_m}_{\mc{L}(L^{p_1}(w^{1+\delta};X),L^{p_1}(w^{1+\delta};Y))} &\leq \inc_{X,Y,p_1,q_0}([w]_{A_1}) \elR{2}{q^\prime}{\mc{T}} \nrm{m}_{V^s(\Delta;\mc{T})}.
        \end{align*}
        Let $\theta \in (0,1)$ be such that $\theta(1+\delta)q=p_1$, and fix $p_0 \in (1,q)$ such that $[p_0,p_1]_\theta = q$. These parameters exist by the same argument as in Theorem \ref{thm:HScase}\eqref{it:HScase1}.
        Since $p_0 < r_0'$, we know by Theorem \ref{thm:mult-s-var}\eqref{it:mult-s2} and \eqref{eq:r0q0rq} that
        \begin{align*}
          \nrm{T_m}_{\mc{L}(L^{p_0}(\RR;X),L^{p_0}(\RR;Y))} &\lesssim_{X,Y,p_0,r_0} \elR{2}{r}{\mc{T}}\nrm{m}_{V^s(\Delta;\mc{T})}.
        \end{align*}
        Therefore by complex interpolation as in Theorem \ref{thm:HScase}\eqref{it:HScase1} we have boundedness of $\map{T_m}{L^{q}(w;X)}{L^{q}(w;Y)}$ with the required norm estimate.
      \end{proof}

      When dealing with operator-valued multipliers $m$, to check the hypotheses of our results, one needs an $\el{2}{q^\prime}$-bounded subset $\mc{T} \subset \mc{L}_b(X,Y)$ whose span contains $m(\RR)$, such that $m$ has the appropriate regularity \emph{when measured with respect to the Minkowski norm induced by $\mc{T}$}.
      An obvious na\"ive choice is to assume that $m(\RR)$ is $\el{2}{q^\prime}$-bounded and to take $\mc{T} = \overline{m(\RR)}$, but $m$ may not be sufficiently regular with respect to the $\mc{T}$-Minkowski norm.
      By making $\mc{T}$ larger $m$ becomes more regular in the $\mc{T}$-Minkowski norm, but enlarging $\mc{T}$ may violate $\el{2}{q^\prime}$-boundedness.
      Constructing such a set $\mc{T}$ given a general multiplier $m$ is quite subtle (except of course in the scalar case, where the Minkowski norm on the one-dimensional span of $m$ is equivalent to the absolute value on $\CC$).
      Below we give an example where these problems may be surmounted using extrapolation techniques.

      \begin{prop}\label{prop:holder-lrs}
        Let $\alpha \in (0,1]$.
        Suppose that $\map{m}{\RR}{\mc{L}(\Sigma(\RR^d),L^0(\RR^d))}$ and that for some $p_0\in (1, \infty)$ and all $w\in A_{p_0}$ the following H\"older-type condition is satisfied:
        \begin{equation}\label{eqn:holder-est-m}
          \sup_{x\in \RR}\|m(x)\|_{\mc{L}(L^{p_0}(w))}+ \sup_{J \in \Delta} |J|^{\alpha} [m]_{C^\alpha(J;\mc{L}_b(L^{p_0}(w)))} \leq \inc ([w]_{A_{p_0}}).
        \end{equation}
        Then there exists a subset $\mc{T} \subset \mc{L}(\Sigma(\RR^d),L^0(\RR^d))$ such that $m \in V^{1/\alpha}(\Delta;\mc{T})$ and
        ${\mc{T}}$ is $\el{u}{v}$-bounded on $L^p(w)$ for all $p,u,v \in (1,\infty)$ and $w \in A_p$, with
        \begin{equation*}
          \elR{u}{v}{{\mc{T}}} \leq \inc_{p,u,v} ( [w]_{A_p} ).
        \end{equation*}
      \end{prop}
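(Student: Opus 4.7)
The plan is to build $\mc{T}$ by hand from $m$, verify the $V^{1/\alpha}$-regularity by inspection, and then combine scalar Rubio de Francia extrapolation with Proposition~\ref{prop:unifbd-ls} to obtain the $\el{u}{v}$-bound.

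Set $s := 1/\alpha$. I would define
\[
A := \{m(x) : x \in \RR\} \cup \Bigl\{\frac{|J|^\alpha (m(x) - m(y))}{|x-y|^\alpha} : J \in \Delta,\ x, y \in J,\ x \neq y\Bigr\}
\]
and take $\mc{T} := \operatorname{absco}(A)$ inside $\mc{L}(\Sigma(\RR^d), L^0(\RR^d))$. Since every element of $A$ has Minkowski norm at most $1$ in $\mc{T}$, we get $\|m(x)\|_\mc{T} \leq 1$ for every $x$ and $\|m(x)-m(y)\|_\mc{T} \leq (|x-y|/|J|)^\alpha$ whenever $x,y \in J \in \Delta$. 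Hence $|J|^\alpha [m|_J]_{C^\alpha(J;\spn \mc{T})} \leq 1$, and Lemma~\ref{lem:embeddingVHR}(ii) gives $\|m\|_{V^{s}(\Delta;\mc{T})} \leq 2$.

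For the $\el{u}{v}$-bound I would first use the H\"older hypothesis \eqref{eqn:holder-est-m} to observe that every $T \in A$ satisfies $\|T\|_{\mc{L}(L^{p_0}(w))} \leq \inc([w]_{A_{p_0}})$ for all $w \in A_{p_0}$, uniformly in $T$. Classical scalar Rubio de Francia extrapolation then upgrades this to $\|T\|_{\mc{L}(L^q(w))} \leq \inc_q([w]_{A_q})$ for every $q \in (1,\infty)$ and $w \in A_q$, still uniformly over $T \in A$. Now, given $p,u,v \in (1,\infty)$ and $w \in A_p$, openness of $A_p$ (Proposition~\ref{prop:muckenhoupt}\eqref{it:mw5}) lets me pick an auxiliary exponent $q_0 \in (1,\min\{p,u,v\})$ with $w \in A_{p/q_0}$ and $[w]_{A_{p/q_0}}$ dominated by an increasing function of $[w]_{A_p}$. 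Applying Proposition~\ref{prop:unifbd-ls} to $A$ with this $q_0$ yields $\el{u}{v}$-boundedness of $A$ on $L^p(w)$, and this transfers to $\mc{T} = \operatorname{absco}(A)$ by the stability of $\el{u}{v}$-boundedness under absolutely convex hulls noted after Definition~\ref{dfn:lrs}.

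The main obstacle is the bookkeeping needed to ensure that the final $\el{u}{v}$-bound depends only on $p$, $u$, $v$, and $[w]_{A_p}$, rather than on the auxiliary exponent $q_0$, which itself depends on $w$ through the quantitative form of openness of $A_p$. Making this explicit requires tracking the reverse H\"older exponent of $w$ and its standard quantitative dependence on $[w]_{A_p}$. The other substantive ingredient, conceptually, is the idea of absorbing both the sup bound on $m$ and its regularity into a single Minkowski ball; the scalar version of this is immediate, but here one genuinely needs the scalar extrapolation step to ensure that enlarging $A$ by the normalized differences does not spoil uniform $L^p(w)$-boundedness.
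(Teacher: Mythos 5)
Your proposal is correct and follows essentially the same route as the paper: define $\mc{T}$ from the H\"older quotients scaled by $|J|^\alpha$, verify the $V^{1/\alpha}$-estimate directly from the Minkowski norm via Lemma~\ref{lem:embeddingVHR}(ii), upgrade the $L^{p_0}(w)$-bounds to all $L^p(w)$ by scalar Rubio de Francia extrapolation, and conclude with Proposition~\ref{prop:unifbd-ls}. The only cosmetic difference is that you explicitly pass to $\operatorname{absco}(A)$ and invoke stability of $\el{u}{v}$-boundedness under absolutely convex hulls, while the paper leaves the convexification implicit; you also spell out the quantitative-openness bookkeeping that the paper absorbs into its $\inc$-convention.
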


      \begin{proof}
        For each $J \in \Delta$ define
        \begin{equation*}
          \mc{T}(J) := m(J) \cup \cbraces{\frac{m(x) - m(y)}{|x-y|^\alpha} |J|^{\alpha}: x \neq y \in J },
        \end{equation*}
        and set $\mc{T} := \bigcup_{J \in \Delta} \mc{T}(J)$.
        Note that $m(\RR) \subset \mc{T}$.
        We will show that $\mc{T}$ has the desired properties.

        Since $m(x)\in \mc{T}$ and $\frac{m(x) - m(y)}{|x-y|^\alpha} |J|^{\alpha} \in \mc{T}$ for all $J \in \Delta$ and all $x \neq y \in J$, by the definition of the Minkowski and H\"older norms, we have $\|m(x)\|_{{\mc{T}}}\leq 1$ and $|J|^{\alpha}[m]_{C^\alpha(J;{\mc{T}})} \leq 1$, from which it follows directly that $m \in V^{1/\alpha}(\Delta;\mc{T})$.

        By scalar extrapolation (see \cite[Theorems 3.9 and Corollary 3.14]{CMP11}), we have \eqref{eqn:holder-est-m} for all $p\in (1, \infty)$, which implies that
        \begin{equation*}
          \nrm{Tf}_{L^{p}(w)}  \leq \inc_p ( [w]_{A_{p}} ) \nrm{f}_{L^p(w)}
        \end{equation*}
        for all $p \in (1,\infty)$, $w \in A_p(\RR^d)$, $f \in L^p(w)$, and $T \in \mc{T}$.
        Thus the $\el{u}{v}$-boundedness result follows directly from Proposition \ref{prop:unifbd-ls}.
      \end{proof}

      In the next example we specialise to the case $X = Y = L^r$ and $s \in (1,2)$.
      Results for $s \in [2,\infty)$ will be presented in Example \ref{ex:Lr-large-s}.
      Note that the $\ell^2$-boundedness or $\el{2}{s}$-boundedness assumptions can be deduced for instance from weight-uniform H\"older estimates as in Proposition \ref{prop:holder-lrs}.

      \begin{example}\label{ex:Lr-small-s}
        Let $p,r \in (1,\infty)$ and let $\mc{T} \subset \mc{L}_b(L^r)$ be absolutely convex. Let $s\in (1, 2)$ and $m \in V^s(\Delta;\mc{T})$. Then $T_m$ is bounded on $L^p(w;L^r)$ in each of the following cases:
        \begin{enumerate}[(i)]
        \item If $r=2$,
\begin{enumerate}[(a)]
\item $p \in [s,\infty)$ and $w\in A_{p/s}$.
\item $p\in (1, s']$ and $w\in \alpha_{p,s'}$.
\end{enumerate}
\item If $r \in (2,\infty)$,
\begin{enumerate}[(a)]
  \item $p\in (2, \infty)$, $w\in A_{p/2}$ and $\mc{T}$ is $\ell^{2}$-bounded.
  \item $p \in (1,r)$, $s \in (1,r')$, $w\in \alpha_{p,s'}$ and $\mc{T}$ is $\el{2}{s}$-bounded.
\end{enumerate}
\item If $r \in (1,2)$,
\begin{enumerate}[(a)]
  \item $p\in (1, 2)$, $w \in \alpha_{p,2}$ and $\mc{T}$ is $\ell^{2}$-bounded.
  \item $p \in (r,\infty)$, $s \in (1,r)$, $w\in A_{p/s}$ and $\mc{T}$ is $\el{2}{s'}$-bounded.
\end{enumerate}
\end{enumerate}
\end{example}

\begin{proof}
The case (i)(a) follows from Theorem \ref{thm:HScase} and the case (i)(b) from a duality argument.   The cases (ii)(a) and (iii)(a) follow from Theorem \ref{thm:mult-s-var}\eqref{it:mult-s1} and \eqref{it:mult-s2} with $q=2$. For (iii)(b) choose $q\in (s,r)$ such that $w\in A_{p/q}$. By Corollary \ref{cor:lrstriangle}, $\mc{T}$ is $\el{2}{q'}$-bounded, and therefore Theorem \ref{thm:mult-s-var}\eqref{it:mult-s1} applies. Similarly, (ii)(b) follows from Theorem \ref{thm:mult-s-var}\eqref{it:mult-s2}.
\end{proof}

There is some overlap between the cases (a) and (b) in Example \ref{ex:Lr-small-s}, but the classes of weights considered are difficult to compare.
For $X = L^r$, we can exploit that we always have either $X^2 \in \UMD$ or $(X^*)^2 \in \UMD$.
This is not possible for general $\UMD$ Banach function spaces, which restricts the class of multipliers that can be handled by our results, as shown in the following example.

\begin{example} Let $p \in (1,\infty)$, $r \in (1,2)$, and let $\mc{T} \subset \mc{L}_b(L^r\oplus L^{r'})$ be absolutely convex. Let $s\in (1, r)$ and $m\in V^s(\Delta;\mc{T})$. Then $T_m$ is bounded on $L^p(w;L^r\oplus L^{r'})$ in each of the following cases:
\begin{enumerate}[(i)]
\item $p \in (r,\infty)$, $w \in A_{p/s}$ and $\mc{T}$ is $\el{2}{s'}$-bounded.
\item $p\in (1,r')$, $w\in \alpha_{p,s'}$ and $\mc{T}$ is $\el{2}{s}$-bounded.
\end{enumerate}
The result follows from Theorem \ref{thm:mult-s-var} in the same way as in Example \ref{ex:Lr-small-s}.
\end{example}

\subsection{Multipliers in intermediate UMD Banach function spaces\label{subs:UMDintermediate}}

We can prove stronger results, allowing for multipliers of lower regularity, if we consider `intermediate' spaces $X = [Y,H]_\theta$ where $Y^q \in \UMD$ for some $q \in (1,2]$ and $H$ is a Hilbert space.
For example, when $r \in (2,\infty)$, we have $L^r = [L^{r_0},L^2]_\theta$ for some $r_0 \in (r, \infty)$ and $\theta \in (0,1)$. In this case $Y = L^{r_0}$ satisfies the conditions of Theorem \ref{thm:mult-s-var}\eqref{it:mult-s1} with $q=2$ and with $H = L^2$ we can use Theorem \ref{thm:HScase}.

In order to use interpolation methods we will need that $\text{span}(\mathcal{T})$ with the Minkow\-ski norm is a Banach space, i.e. that  $\mathcal{T}$ is a Banach disc (see below Definition \ref{def:svariation}).

\begin{thm}\label{thm:mult-interp-edit}
  Let $p \in (1,\infty)$, $q \in (1,2]$ and $\theta \in (0,1)$.
  Let $Y$ and $H$ be Banach function spaces over the same measure space, with $Y^q \in \UMD$, $H$ a Hilbert space, and $Y \cap H$ dense in both $Y$ and $H$. Let $X = [Y,H]_\theta$.
  Suppose $\mc{T} \subset \mc{L}_b(Y \cap H)$ is a Banach disc which is $\el{2}{q^\prime}$-bounded on $Y$ and uniformly bounded on $H$.
  Let $s \in (1,\infty)$ and suppose that $m \in V^s(\Delta;\mc{T})$.
  \begin{enumerate}[(i)]
  \item \label{it:interp1} If $s < \min\cbrace{p,[q,2]_\theta}$ and $s \geq [q,1]_\theta$, then
    \begin{equation*}
      \|T_m\|_{\mc{L}(L^p(w;X))}\leq \inc_{Y,p, q, s,  \theta}([w]_{A_{p/s}}) \|m\|_{V^{s}(\Delta;\mc{T})} \elR{2}{q'}{\mc T}
    \end{equation*}
    for all $w \in A_{p/s}$.
  \item \label{it:interp2} If
    \begin{equation*}
      \frac{1}{s} > \max\cbraces{ \frac{1}{[q,2]_\theta}-\frac{1}{p}, \frac{1-\theta}{q}, \frac1p -\frac{\theta}{2}}
    \end{equation*}
    and $p > [q,1]_\theta$, then
    \begin{equation*}
      \|T_m\|_{\mc{L}(L^p(\RR;X))}\lesssim_{Y, p,q, s, \theta}  \|m\|_{V^{s}(\Delta;\mc{T})} \elR{2}{q'}{\mc T}.
    \end{equation*}
  \end{enumerate}
\end{thm}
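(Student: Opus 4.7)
My plan is bilinear complex interpolation applied to the map $(m,f) \mapsto T_m f$, interpolating between a result on the $Y$-valued side (from Theorem \ref{thm:mult-s-var}(i)) and a result on the $H$-valued side (from Theorem \ref{thm:HScase}). The target space $L^p(w;X)$ is identified by complex interpolation of (weighted) vector-valued Lebesgue spaces combined with $X = [Y,H]_\theta$; the multiplier class $V^s(\Delta;\mc{T})$ is placed inside $[V^{s_0}(\Delta;\mc{T}), V^{s_1}(\Delta;\mc{T})]_\theta$ via Theorem \ref{thm:VR-interpoln}, provided $s < [s_0,s_1]_\theta$. The fact that $\mc{T}$ is a Banach disc ensures that $\spn(\mc{T})$ with the Minkowski norm is a genuine Banach space, so that the $V^s$-interpolation is available.

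For part (i), fix $w \in A_{p/s}$ and choose auxiliary exponents $s_0 \in [s,q)$ close to $q$, $s_1 \in (1,2]$, $p_0 > q$, $p_1 \geq s_1$ with $[p_0,p_1]_\theta = p$. The $Y$-endpoint, Theorem \ref{thm:mult-s-var}(i), gives boundedness of $T_m$ on $L^{p_0}(w_0;Y)$ since $\mc{T}$ is $\el{2}{q'}$-bounded on $Y$ and $w_0 \in A_{p_0/q}$; the $H$-endpoint, Theorem \ref{thm:HScase}(i), gives boundedness on $L^{p_1}(w_1;H)$ since $w_1 \in A_{p_1/s_1}$, with $m$ viewed as $\mc{L}_b(H)$-valued via the uniform bound of $\mc{T}$ on $H$. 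The weights $w_0,w_1$ are built as suitable powers of $w$ using reverse H\"older (Proposition \ref{prop:muckenhoupt}(iii)) and openness of Muckenhoupt classes (Proposition \ref{prop:muckenhoupt}(ii)) so as to satisfy $w^{1/p} = w_0^{(1-\theta)/p_0} w_1^{\theta/p_1}$. The condition $s < [q,2]_\theta$ ensures that $s_0 \to q^-$ and $s_1 \to 2^-$ yields $s < [s_0,s_1]_\theta$, while $s \geq [q,1]_\theta$ and $s < p$ ensure that compatible weights exist (the former forces $s_1$ close to $1$ in the weight balance).

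For part (ii), the scheme is analogous but uses the unweighted Theorem \ref{thm:HScase}(ii) on the $H$-side, which only requires $1/s_1 > |1/p_1 - 1/2|$. This extra flexibility produces three regimes for $p_1$: the choice $p_1 = 2$ lets $s_1$ be arbitrarily large and yields $1/s > (1-\theta)/q$; $p_1 > 2$ yields $1/s > 1/[q,2]_\theta - 1/p$; and $p_1 < 2$ yields $1/s > 1/p - \theta/2$. The hypothesis $p > [q,1]_\theta$ is forced by $[p_0,p_1]_\theta = p$ with $p_0 > q$ and $p_1 > 1$; the theorem's condition on $s$ is precisely the simultaneous lower bound from the three cases above.

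The principal obstacle is the weight bookkeeping in part (i): producing $w_0, w_1$ with the prescribed Muckenhoupt regularity and satisfying the multiplicative identity simultaneously. This mirrors the interpolation argument from the proof of Theorem \ref{thm:HScase}(i) in the $p > s = 2$ case, where reverse H\"older is combined with a careful choice of exponents to land on the desired weighted space. The bilinear complex interpolation itself is standard (see \cite[\textsection 4.4]{BL76}) once order continuity of the spaces is noted, which follows from the $\UMD$-implied reflexivity of $Y$ (and hence of $X$).
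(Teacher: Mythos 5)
Your scheme for part \eqref{it:interp2} is essentially the paper's: an unweighted bilinear complex interpolation between Proposition \ref{prop:main-mult} on the $Y$-side and Theorem \ref{thm:HScase}\eqref{it:HScase2} on the $H$-side, split into three regimes according to the position of $p$ (the paper uses $p \geq [\infty,2]_\theta$, $[q,2]_\theta < p < [\infty,2]_\theta$, and $[q,1]_\theta < p \leq [q,2]_\theta$, which match your three choices of $p_1$).

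For part \eqref{it:interp1}, however, there is a real gap in your plan. You propose to fix $w \in A_{p/s}$ from the start and manufacture weights $w_0 \in A_{p_0/q}$, $w_1 \in A_{p_1/s_1}$ as powers of $w$ satisfying the multiplicative identity $w^{1/p} = w_0^{(1-\theta)/p_0} w_1^{\theta/p_1}$. This is exactly the bookkeeping the paper avoids, because it does not go through cleanly: for a general $w \in A_{p/s}$, there is no reason that powers of $w$ land in the required Muckenhoupt classes with the required interpolation exponents simultaneously, and reverse H\"older only gives you a small amount of room. (The $p>s=2$ case of Theorem \ref{thm:HScase}\eqref{it:HScase1} that you invoke works precisely because one endpoint there is \emph{unweighted} via Plancherel; here both endpoints are genuinely weighted.) The paper's key idea, which your proposal is missing, is to \emph{not} interpolate toward a general weight at all. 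Instead one fixes a single $w \in A_1$ and uses it on both endpoints (valid since $A_1$ embeds in every $A_r$, $r \geq 1$), obtaining by interpolation a bound on $L^\sigma(w;X)$ for $\sigma = [t, r_{s,\theta,q}]_\theta$ with dependence $\inc(\,[w]_{A_1})$. Then, since the estimate $\|T_m f\|_X \leq C \|f\|_X$ in $L^\sigma(w)$ is a pair of \emph{scalar} inequalities in the $\RR$-variable uniformly over $A_1$, \emph{scalar Rubio de Francia extrapolation} (from \cite{CMP11}) upgrades this to $L^p(w;X)$ for all $p \geq \sigma$ and $w \in A_{p/\sigma}$. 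Finally letting $t \to q^+$ (so $\sigma \to s$) and using openness of the $A_p$ classes recovers the full claimed range $w \in A_{p/s}$. Without this extrapolation step your argument does not reach the full weight class.

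A smaller point: the paper first reduces from $V^s$ to $R^\sigma$ with $\sigma > s$ near $s$ (via Lemma \ref{lem:embeddingVHR} and openness of the hypotheses on $s$), and then interpolates the $R^s$-classes, which have the sharp inclusion \eqref{eq:Rs-multi} with no $\varepsilon$-loss, choosing the $H$-endpoint exponent $r_{s,\theta,q}$ to satisfy $[q, r_{s,\theta,q}]_\theta = s$ exactly. Your plan interpolates $V^s$-classes directly and absorbs the $\varepsilon$-loss in the strict hypothesis; this is workable but is not the route the paper takes, and it is a bit more awkward to push to the boundary cases.
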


The allowable exponents $(p,s)$ in Theorem \ref{thm:mult-interp-edit} are shown in Figure \ref{fig:exponents}.
The symmetry in Figure \ref{fig:exponents} is due to the equalities
\begin{equation*}
  \frac{\theta}{2} = \frac{1}{[\infty,2]_\theta} - 0 = \frac{1}{[q,1]_\theta} - \frac{1}{[q,2]_\theta}  = \frac{1}{[q,2]_\theta} - \frac{1}{[q,\infty]_\theta}
\end{equation*}
and
\begin{equation*}
  \frac{1-\theta}{q} = \frac{1}{[q,\infty]_\theta} -0 = \frac{1}{[q,2]_\theta} - \frac{1}{[\infty,2]_\theta}.
\end{equation*}

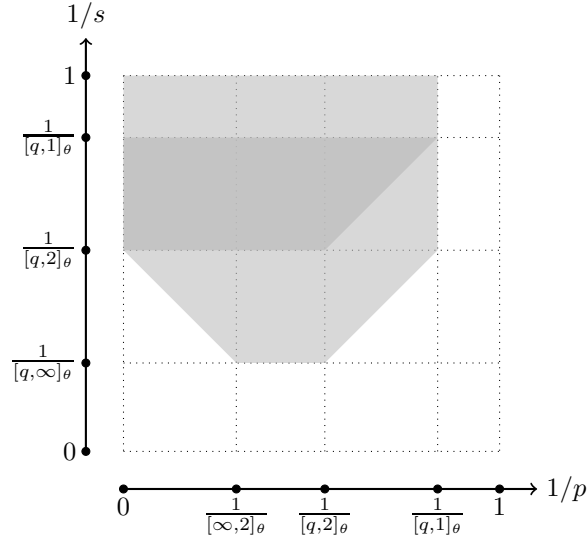
\begin{figure}[h!!]
\caption{Allowable exponents for Theorem \ref{thm:mult-interp-edit}: the weighted case (i) dark shaded, the unweighted case (ii) light shaded.}
\begin{center}
\begin{tikzpicture}[scale=5]

        \pgfmathsetmacro{\THETA}{0.6} 
        \pgfmathsetmacro{\parQ}{(1/1.7)} 

	\coordinate (P1) at (0,{\parQ*(1 - \THETA) + \THETA/2});
	\coordinate (P2) at (\THETA/2,{\parQ*(1-\THETA)});
	\coordinate (P3) at ({\parQ*(1 - \THETA) + \THETA/2},{\parQ*(1-\THETA)});
	\coordinate (P4) at ({\parQ*(1 - \THETA) + \THETA},{\parQ*(1 - \THETA) + \THETA/2});
    \coordinate (P5) at ({\parQ*(1 - \THETA) + \THETA},1);

    \coordinate (P22) at ({\parQ*(1 - \THETA) + \THETA/2},{\parQ*(1 - \THETA) + \THETA/2});
    \coordinate (P32) at ({\parQ*(1 - \THETA) + \THETA},{\parQ*(1 - \THETA) + \THETA});
    \coordinate (P42) at (0,{\parQ*(1 - \THETA) + \THETA});

	\coordinate (TL) at (0,1);
	\coordinate (TR) at (1,1);

	\draw [thick,->] (0,-0.1) -- (1.1,-0.1);
        \node [right] at (1.1,-0.1) {$1/p$};

	\draw [fill=black] (0,-0.1) circle [radius = .3pt];
	\node [below] at (0,-0.1) {$0$};


	\draw [fill=black] (1,-0.1) circle [radius = .3pt];
	\node [below] at (1,-0.1) {$1$};

        \draw [fill=black] (\THETA/2, -0.1) circle [radius = .3pt];
        \node [below] at (\THETA/2,-0.1) {$\frac{1}{[\infty,2]_\theta}$};

        \draw [fill=black] ({\parQ*(1 - \THETA) + \THETA/2},-0.1) circle [radius = .3pt];
        \node [below] at ({\parQ*(1 - \THETA) + \THETA/2},-0.1) {$\frac{1}{[q,2]_\theta}$};

        \draw [fill=black] ({\parQ*(1 - \THETA) + \THETA},-0.1) circle [radius = .3pt];
        \node [below] at ({\parQ*(1 - \THETA) + \THETA},-0.1) {$\frac{1}{[q,1]_\theta}$};

	\draw [thick,->] (-0.1,0) -- (-0.1,1.1);
	\node [above] at (-0.1,1.1) {$1/s$};

        \draw [fill=black] (-0.1,0) circle [radius = .3pt];
	\node [left] at (-0.1,0) {$0$};

        \draw [fill=black] (-0.1,{\parQ*(1 - \THETA)}) circle [radius = .3pt];
        \node [left] at (-0.1,{\parQ*(1 - \THETA)}) {$\frac{1}{[q,\infty]_\theta}$};

        \draw [fill=black] (-0.1, {\parQ*(1 - \THETA) + \THETA/2}) circle [radius = .3pt];
        \node [left] at (-0.1, {\parQ*(1 - \THETA) + \THETA/2}) {$\frac{1}{[q,2]_\theta}$};

        \draw [fill=black] (-0.1,{\parQ*(1 - \THETA) + \THETA}) circle [radius = .3pt];
        \node [left] at (-0.1,{\parQ*(1 - \THETA) + \THETA}) {$\frac{1}{[q,1]_\theta}$};

	\draw [fill=black] (-0.1,1) circle [radius = .3pt];
	\node [left] at (-0.1,1) {$1$};


        \draw [thin,dotted] (0,1) -- (1,1); 
	\draw [thin,dotted] (0,0) -- (1,0); 

        \draw [thin,dotted] (0,{\parQ*(1-\THETA)}) -- (1,{\parQ*(1-\THETA)});
        \draw [thin,dotted] (0,{\parQ*(1 - \THETA) + \THETA/2}) -- (1,{\parQ*(1 - \THETA) + \THETA/2});
        \draw [thin,dotted] (0,{\parQ*(1 - \THETA) + \THETA}) -- (1,{\parQ*(1 - \THETA) + \THETA});


	\draw [thin,dotted] (0,1) -- (0,0); 
	\draw [thin,dotted] (1,1) -- (1,0); 

        \draw [thin,dotted] (\THETA/2,0) -- (\THETA/2,1);
        \draw [thin,dotted] ({\parQ*(1-\THETA) + \THETA/2},0) -- ({\parQ*(1-\THETA) + \THETA/2},1);
        \draw [thin,dotted] ({\parQ*(1 - \THETA) + \THETA},0) -- ({\parQ*(1 - \THETA) + \THETA},1);

	\path [fill=lightgray, opacity = 0.6] (TL)--(P1)--(P2)--(P3)--(P4)--(P5);

        \path [fill=lightgray, opacity = 0.8] (P1)--(P22)--(P32)--(P42);	
\end{tikzpicture}
\end{center}
\label{fig:exponents}
\end{figure}

\begin{proof}
As in the proof of Theorem \ref{thm:HScase}, it suffices to consider decaying multipliers $m\in V_0^s(\Delta;\mc{T})$.
Moreover, by Lemma \ref{lem:embeddingVHR}, Proposition \ref{prop:muckenhoupt}\eqref{it:mw5} and the openness of the upper bound assumptions on $s$, it suffices to consider $m\in R_0^s(\Delta;\mc{T})$.
Throughout the proof we let $r_{s,\theta,q} \in [1,\infty)$ be the unique number such that
  \begin{equation*}
    [q,r_{s,\theta,q}]_\theta = s,
  \end{equation*}
  which exists if $[q, 1]_\theta \leq s < [q, \infty]_\theta$.

  \textbf{Part (i):} First assume $s \neq [q, 1]_\theta$, so that $r_{s,\theta,q}>1$.
  Fix a weight $w \in A_1$.
  Take $t > q$ and define $\sigma  = [t,r_{s,\theta,q}]_\theta>s$.
  By Proposition \ref{prop:main-mult} we have boundedness of the bilinear map
  \begin{equation*}
    R_0^{q}(\Delta;\mc{T}) \times L^{t}(w;Y) \to L^{t}(w;Y), \qquad (m,f) \mapsto T_m f
  \end{equation*}
  using that $\mc{T}$ is $\el{2}{q^\prime}$-bounded on $Y$.
  Moreover, since $s \leq [q,2]_\theta$, we know that $r_{s,\theta,q}\leq 2$, so we have by Theorem \ref{thm:HScase}\eqref{it:HScase1} and Lemma \ref{lem:embeddingVHR} that the bilinear map
  \begin{equation*}
    R_0^{r_{s,\theta,q}}(\Delta;\mc{T}) \times L^{r_{s,\theta,q}}(w;H) \to L^{r_{s,\theta,q}}(w;H), \qquad (m,f) \mapsto T_m f
  \end{equation*}
  is bounded, using
  \begin{equation}\label{eq:unif-bdd-hilbert}
    \nrm{m}_{R^{s}(\Delta;\mc{L}_b(H))} \lesssim \nrm{m}_{R^{s}(\Delta;\mc{T})}
  \end{equation}
  by the uniform boundedness of $\mc{T}$ on $H$.

  We define a bilinear map
  \begin{align*}
    \big( R_0^s(\Delta;\mc{T}) \cap R_0^{r_{s,\theta,q}}(\Delta; \mc{T})\big ) \times &\big(L^t(w;Y) \cap L^{r_{s,\theta,q}}(w;H) \big) \\ &\to L^r(w;Y) \cap L^{r_{s,\theta,q}}(w;H) , \qquad (m,f) \mapsto T_m f.
  \end{align*}
  This is well-defined as it is the extension of the map $(m,f) \mapsto T_m f$ defined for $m \in R_0^{s\wedge r_{s,\theta,q}}(\Delta;\mc{T})$ and $f \in \mc{S}(\RR;Y \cap H)$.
  Here we use that $Y \cap H$ is dense in both $Y$ and $H$.
  By bilinear complex interpolation \cite[\textsection 4.4]{BL76} we have boundedness of
  \begin{align*}
    [R_0^{q}(\Delta;\mc{T}), R_0^{r_{s,\theta,q}}(\Delta;\mc{T})]_\theta \times &[L^{t}(w;Y), L^{r_{r,\theta,q}}(w;H)]_\theta \\
                                                                                &\to [L^{t}(w;Y),L^{r_{s,\theta,q}}(w;H)]_\theta , \qquad (m,f) \mapsto T_m f.
  \end{align*}
  Here we use that the Minkowski norm on the linear span of $\mc{T}$ is complete, i.e. that $\mc{T} \subset \mc{L}_b(Y \cap H)$ is a Banach disc.

  By Theorem \ref{thm:VR-interpoln} we have
  \begin{equation*}
    R_0^{[q,r_{s,\theta,q}]_\theta}(\Delta;\mc{T}) \hookrightarrow [R_0^{q}(\Delta;\mc{T}),R_0^{r_{s,\theta,q}}(\Delta;\mc{T})]_\theta.
  \end{equation*}
  Using this embedding and complex interpolation of weighted Bochner spaces (see \cite[Theorem 1.18.5]{hT78}; note that the proof simply extends to the case $X_0 \neq X_1$), we get boundedness of	
  \begin{equation*}
    R_0^s(\Delta;\mc{T}) \times L^\sigma(w;X) \to L^{\sigma}(w;X), \qquad (m,f) \mapsto T_m f
  \end{equation*}
  with norm estimate
  \begin{equation*}
    \nrm{ \nrm{T_m f}_X }_{L^\sigma(w)} \leq  \inc_{Y, q, s,t, \sigma,\theta} ([w]_{A_{1}}) \nrm{m}_{R^{s}(\Delta;\mc{T})} \elR{2}{q'}{\mc T}\nrm{ \nrm{f}_X }_{L^\sigma(w)}
  \end{equation*}
  for all $w \in A_1$ and all simple functions $f\colon\RR\to X$.
  By scalar-valued extrapolation (see \cite[Theorems 3.9 and Corollary 3.14]{CMP11}) and density of the simple functions we deduce
  \begin{equation*}
    \nrm{T_m f }_{L^p(w;X)} \leq  \inc_{Y,p,q,s,t, \sigma,\theta} ([w]_{A_{p/\sigma}}) \nrm{m}_{R^{s}(\Delta;\mc{T})} \elR{2}{q'}{\mc T}\nrm{f}_{L^p(w;X)}
  \end{equation*}
  for all $p \in [\sigma,\infty)$ and all $w \in A_{p/\sigma}$.
  Taking $t$ arbitrarily close to $q$ and using Proposition \ref{prop:muckenhoupt}\eqref{it:mw5} proves the case $[q, 1]_\theta \neq s$.

  Next if $[q, 1]_\theta = s$ and $w\in A_{p/s}$, then by Proposition \ref{prop:muckenhoupt}\eqref{it:mw5} we can choose $t\in (s, [q, 2]_\theta)$ such that $w\in A_{p/t}$.
  By the previous case $T_m$ is bounded on $L^p(w;X)$ for all $m\in R^{t}(\Delta;\mc{T})$ and hence also for $m\in R^{s}(\Delta;\mc{T})$, which completes the proof.

  \bigskip

\textbf{Part (ii):} Without loss of generality we may assume that $s > [q,2]_\theta$ by embedding of the $R^s$-spaces and the fact that
\begin{equation*}
  \frac{1}{[q,2]_\theta}>\max\cbraces{ \frac{1}{[q,2]_\theta}-\frac{1}{p}, \frac{1-\theta}{q}, \frac1p -\frac{\theta}{2}}
\end{equation*}
for $p >[q,1]_\theta$.
Note that this implies that $r_{s,\theta,q} > 2$.
We will consider three cases:

\textbf{Case 1:} $p \geq [\infty,2]_\theta$.
Since
 \begin{align*}
   \frac{1}{p \theta} &> \frac{1}{\theta}\has{ \frac{\theta}{2}+\frac{1-\theta}{q}  - \frac{1}{s}}
   =  \frac{1}{2} - \frac{1}{r_{s,\theta,q}}
 \end{align*}
 we can find a $p_1> p\theta \geq 2$ such that $p_1<p$ and $p_1 <  \ha{\frac{1}{2} - \frac{1}{r_{s,\theta,q}}}^{-1}$.
 Therefore we know by Theorem \ref{thm:HScase}\eqref{it:HScase2}, using \eqref{eq:unif-bdd-hilbert}, that the bilinear map
 \begin{equation*}
   R_0^{r_{s,\theta,q}}(\Delta;\mc{T}) \times L^{p_1}(\RR;H) \to L^{p_1}(\RR;H), \qquad (m,f) \mapsto T_m f
 \end{equation*}
 is bounded.
 Since $p < [\infty,p_1]_\theta$ we can find a $p_0 \in (p,\infty)$ such that $p= [p_0,p_1]_\theta$.
 By Proposition \ref{prop:main-mult} we have boundedness of the bilinear map
 \begin{equation*}
   R_0^{q}(\Delta;\mc{T}) \times L^{p_0}(\RR;Y) \to L^{p_0}(\RR;Y), \qquad (m,f) \mapsto T_m f,
 \end{equation*}
 using that $\mc{T}$ is $\el{2}{q^\prime}$-bounded on $Y$.
 We can now finish the proof using bilinear complex interpolation, Theorem \ref{thm:VR-interpoln} and complex interpolation of Bochner spaces as in the first part.

 \textbf{Case 2:} $[q,2]_\theta <p <[\infty,2]$.
 Note that $R_0^{r_{s,\theta,q}}(\Delta;\mc{T})\hookrightarrow L^\infty(\RR;\mc{T})$.
 Therefore by Plancherel's theorem and \eqref{eq:unif-bdd-hilbert} the bilinear map
\begin{equation*}
           R_0^{r_{s,\theta,q}}(\Delta;\mc{T}) \times L^{2}(\RR;H) \to L^{2}(\RR;H), \qquad (m,f) \mapsto T_m f
\end{equation*}
is bounded.
Since $[q,2]_\theta <p < [\infty,2]_\theta $ we can find a $p_0 \in (q,\infty)$ such that  $p= [p_0,2]_\theta$.
By Proposition \ref{prop:main-mult} we have boundedness of the bilinear map
\begin{equation*}
  R_0^{q}(\Delta;\mc{T}) \times L^{p_0}(\RR;Y) \to L^{p_0}(\RR;Y), \qquad (m,f) \mapsto T_m f,
\end{equation*}
using that $\mc{T}$ is $\el{2}{q^\prime}$-bounded on $Y$.
The proof can now be finished as before.

\textbf{Case 3:} $[q,1]_\theta <p \leq [q,2]$.
Let $\tilde{p} \in (1,2]$ be such that $p = [q, \tilde{p}]_\theta$.
Then since
 \begin{align*}
   \frac{1}{\tilde{p}}  < \frac{1}{\theta}\has{\frac{\theta}{2} + \frac{1}{s} - \frac{1-\theta}{q}}=  \frac{1}{2} + \frac{1}{r_{s,\theta,q}}
 \end{align*}
 we can find a $1<p_1 < \tilde{p}$ such that $p_1 >  \ha{\frac{1}{2} + \frac{1}{r_{s,\theta,q}}}^{-1}$.
 Therefore we know by Theorem \ref{thm:HScase}\eqref{it:HScase2}, using \eqref{eq:unif-bdd-hilbert}, that the bilinear map
\begin{equation*}
           R_0^{r_{s,\theta,q}}(\Delta;\mc{T}) \times L^{p_1}(\RR;H) \to L^{p_1}(\RR;H), \qquad (m,f) \mapsto T_m f
\end{equation*}
is bounded.
Since $p_1 < \tilde{p}$ we can find a $p_0 \in (q,\infty)$ such that $p= [p_0,p_1]_\theta$. By Proposition \ref{prop:main-mult} we have boundedness of the bilinear map
\begin{equation*}
           R_0^{q}(\Delta;\mc{T}) \times L^{p_0}(\RR;Y) \to L^{p_0}(\RR;Y), \qquad (m,f) \mapsto T_m f,
\end{equation*}
again using that $\mc{T}$ is $\el{2}{q^\prime}$-bounded on $Y$.
The proof can again be finished as before.
\end{proof}

The conditions on $m$ in Theorem \ref{thm:mult-interp-edit}\eqref{it:interp2} with $q=2$ are less restrictive than the conditions of \cite[Theorem 3.6]{HP06}, which allows for Banach spaces with the $\LPR_{p}$ property. The proof of Theorem \ref{thm:mult-interp-edit}\eqref{it:interp2} can also be used to improve the conditions of \cite[Theorem 3.6]{HP06}

\begin{rmk}
A weighted variant of part \eqref{it:interp2} of Theorem \ref{thm:mult-interp-edit} holds for an appropriate class of weights, by using a weighted variant of Theorem \ref{thm:HScase}\eqref{it:HScase2} (see \cite[Theorem A(ii)]{sK14}) and limited range extrapolation (see \cite[Theorem 3.31]{CMP11}). However this involves a reverse H\"older assumption on the weight or the dual weight, so the technical details are therefore left to the interested reader.
\end{rmk}

We continue with an application to $X = L^r$ for $s \in [2,\infty)$.
Results for $s \in (1,2)$ have been previously covered by Example \ref{ex:Lr-small-s}.

\begin{example}\label{ex:Lr-large-s}
  Let $(\Omega, \mu)$ be a measure space and let $p,r \in (1,\infty)$. Let $\mc{T} \subset \mc{L}(\Sigma(\Omega),L^0(\Omega))$ be absolutely convex and $\ell^{2}$-bounded on $L^t(\Omega)$ for all $t\in (1, \infty)$.
  Let $s\in [2, \infty)$ and assume $m \in V^s(\Delta;\mc{T})$.
  Then $T_m$ is bounded on $L^p(\RR;L^r(\Omega))$ in each of the following cases:
  \begin{enumerate}[(i)]
  \item $r\in [2, \infty)$ and $\frac{1}{s}>\max\big\{\frac12-\frac1p,\frac12-\frac1r, \frac1p-\frac1r\}$.
  \item $r\in (1, 2]$ and $\frac{1}{s}>\max\big\{\frac1p-\frac12,\frac1r-\frac12, \frac1r-\frac1p\}$.
  \end{enumerate}
\end{example}

\begin{proof}
  It suffices to prove (i), as (ii) follows from a duality argument.
  Let $\overline{\mc{T}}$ be the closure of $\mc{T}$ in $\calL_b(L^2(\Omega))$.
  Then $\overline{\mc{T}}$ is a Banach disc.
  Moreover, by Lemma \ref{lem:l2-bdd} we know that $ \overline{\mc{T}} \subseteq \calL_b(L^t(\Omega))$ is $\ell^2$-bounded for all $t\in (1, \infty)$.
  We will check the conditions of Theorem \ref{thm:mult-interp-edit}\eqref{it:interp2}  with $\overline{\mc{T}}$, $q=2$, $Y = L^t(\Omega)$ for an appropriate $t>r$ and $H = L^2(\Omega)$. Choose $\theta\in (0,\frac2r)$ such that
\[  \frac{1}{s} > \max\cbraces{\frac{1}{2}-\frac{1}{p},\frac{1-\theta}{2}, \frac1p -\frac{\theta}{2}}.\]
Since $s\geq 2$ it follows that $p>[2,1]_{\theta}$. Now the result follows by choosing $t>r$ such that $r=[t, 2]_{\theta}$.
\end{proof}

In a similar way we obtain the following from Theorem \ref{thm:mult-interp-edit}\eqref{it:interp1} and duality. This partly improves Example \ref{ex:Lr-small-s}.

\begin{example}\label{ex:Lr-sweight}
Let $(\Omega, \mu)$ be a measure space and let $p,r \in (1,\infty)$. Let $\mc{T} \subset \mc{L}(\Sigma(\Omega),L^0(\Omega))$ be absolutely convex and $\ell^{2}$-bounded on $L^t(\Omega)$ for all $t\in [2, \infty)$. Let $s\in (1,2)$ and assume $m \in V^s(\Delta;\mc{T})$.
Then $T_m$ is bounded on $L^p(w;L^r(\Omega))$ if $\frac1p <\frac1s \leq \frac1r+\frac12$ and $w\in A_{p/s}$.
\end{example}

\subsection{Multipliers in intermediate UMD Banach spaces}\label{ssec:int-banach}

In this section we consider general $\UMD$ Banach spaces (not just Banach function spaces) and use interpolation to improve the conditions of Theorem \ref{thm:main-multgeneralUMD} considerably, assuming $X$ is an interpolation space between a $\UMD$ space and a Hilbert space, and using the same interpolation scheme as in Theorem \ref{thm:mult-interp-edit}.
This result is new even for scalar-valued multipliers, and it implies sufficient conditions for Fourier multipliers on the space of Schatten class operators.

\begin{thm}\label{thmintermediateUMD}
  Let $p \in (1,\infty)$ and $\theta \in (0,1)$.
  Let $Y$ and $H$ be an interpolation couple, with $Y \in \UMD$, $H$ a Hilbert space, and $Y \cap H$ dense in both $Y$ and $H$.
  Let $X = [Y,H]_\theta$.
  Suppose $\mc{T} \subset \mc{L}_b(Y \cap H)$ is a Banach disc which is $\mc{R}$-bounded on $Y$ and uniformly bounded on $H$.
  Let $s \in (1,\infty)$ and suppose that $m \in V^s(\Delta;\mc{T})$.

  \begin{enumerate}[(i)]
  \item \label{it:interp1intermed} If $1/s > \min \{ 1/p,1 - (\theta/2) \}$,
    then
    \begin{equation*}
      \|T_m\|_{\mc{L}(L^p(w;X))}\leq \inc_{Y,p, s,  \theta}([w]_{A_{p/s}}) \|m\|_{V^{s}(\Delta;\mc{T})} \elRr{\mc{T}}
    \end{equation*}
    for all $w \in A_{p/s}$.
  \item \label{it:interp2intermed} If
    \begin{equation*}
      \frac{1}{s} > \max\cbraces{ 1-\frac{\theta}{2}-\frac{1}{p}, 1-\theta, \frac1p -\frac{\theta}{2}},
    \end{equation*}
    then
    \begin{equation*}
      \|T_m\|_{\mc{L}(L^p(\RR;X))}\lesssim_{Y, p, s, \theta}  \|m\|_{V^{s}(\Delta;\mc{T})} \elRr{\mc{T}}.
    \end{equation*}
  \end{enumerate}
\end{thm}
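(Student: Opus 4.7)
The proof plan follows the strategy of Theorem \ref{thm:mult-interp-edit}, with Theorem \ref{thm:main-multgeneralUMD} (the operator-valued Marcinkiewicz theorem) now playing the role of Proposition \ref{prop:main-mult}. As a first reduction, by Lemma \ref{lem:embeddingVHR}, Proposition \ref{prop:muckenhoupt}\eqref{it:mw2}, and the truncation argument used in the proof of Theorem \ref{thm:HScase}, I may assume that $m \in R_0^s(\Delta;\mc{T})$, possibly after enlarging $s$ slightly. Throughout, the uniform boundedness of $\mc{T}$ on $H$ will be used to pass from the $\mc{T}$-Minkowski norm to the $\mc{L}_b(H)$-operator norm, so that the Hilbert-space results of Section \ref{subs:multi-hilbert} become applicable.

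For part \eqref{it:interp1intermed} I combine two bilinear endpoint estimates by complex interpolation. On the $Y$-side, Theorem \ref{thm:main-multgeneralUMD} supplies
\[
 R^1(\Delta;\mc{T}) \times L^t(w;Y) \to L^t(w;Y), \qquad t \in (1,\infty),\ w \in A_t,
\]
with norm controlled by $\elRr{\mc{T}}$. When $1/s > 1 - \theta/2$, the number $r := r_{s,\theta}$ defined by $[1,r]_\theta = s$ satisfies $r \leq 2$, so Proposition \ref{prop:HScaseProp} produces an $R^r(\Delta;\mc{T}) \times L^r(w;H) \to L^r(w;H)$ bound for $w \in A_1$. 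Applying bilinear complex interpolation \cite[\textsection 4.4]{BL76}, together with Theorem \ref{thm:VR-interpoln} (absorbing a small loss in $s$), the Banach disc hypothesis on $\mc{T}$ to guarantee that $\spn(\mc{T})$ with the Minkowski norm is a Banach space, and \cite[Theorem 1.18.5]{hT78} for weighted Bochner interpolation, yields $T_m \in \mc{L}(L^\sigma(w;X))$ with $\sigma := [t,r]_\theta$; scalar-valued extrapolation (\cite[Theorems 3.9 and Corollary 3.14]{CMP11}) then extends to all $p > \sigma$ and $w \in A_{p/\sigma}$, and letting $t \downarrow 1$ covers every $p > s$ with $w \in A_{p/s}$. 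The remaining sub-case, where $1/s > 1/p$ but $s \geq 2/(2-\theta)$, is obtained by first establishing part \eqref{it:interp2intermed} and applying scalar extrapolation: the admissible region from \eqref{it:interp2intermed} contains an unweighted exponent $p_0$ arbitrarily close to $s$ from above, and extrapolating from this $p_0$ reaches any $p > s$ with $w \in A_{p/s}$.

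For part \eqref{it:interp2intermed} I imitate the three-case structure of the proof of Theorem \ref{thm:mult-interp-edit}\eqref{it:interp2}, distinguishing the regions $p \geq [\infty,2]_\theta = 2/\theta$, $[1,2]_\theta = 2/(2-\theta) < p < 2/\theta$, and $[1,1]_\theta = 1 < p \leq 2/(2-\theta)$. In each region I pair the Marcinkiewicz endpoint on $L^{p_0}(\RR;Y)$ for a suitably chosen $p_0 \in (1,\infty)$ at the $Y$-side with either Theorem \ref{thm:HScase}\eqref{it:HScase2} on $L^{p_1}(\RR;H)$ for appropriate $p_1 > 2$ or $p_1 < 2$, or Plancherel on $L^2(\RR;H)$, at the $H$-side, and then apply bilinear complex interpolation as before (using \cite[Theorem 1.18.4]{hT78} for the unweighted Bochner interpolation). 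The three terms in the $\max$ arise precisely as the three endpoint admissibility constraints: $1/s > 1-\theta/2 - 1/p$ from matching the dyadic sum and the Marcinkiewicz $p_0$, $1/s > 1-\theta$ from requiring the $R^s$-interpolation space of Theorem \ref{thm:VR-interpoln} to embed between $R^1$ and the chosen Hilbert $R^{r}$, and $1/s > 1/p - \theta/2$ symmetrically for $p$ below $2/(2-\theta)$. The main obstacle I expect is the $s \geq 2/(2-\theta)$ sub-case of part \eqref{it:interp1intermed}: one must verify that the unweighted admissible region from part \eqref{it:interp2intermed} indeed reaches $p_0$ arbitrarily close to $s$, and then chase the constants and weight classes through extrapolation to land on the sharp $A_{p/s}$ condition claimed.
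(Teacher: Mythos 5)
Your overall strategy is exactly the paper's: run the proof of Theorem~\ref{thm:mult-interp-edit} with $q=1$, substituting Theorem~\ref{thm:main-multgeneralUMD} for Proposition~\ref{prop:main-mult} as the $Y$-side bilinear endpoint. Part~\eqref{it:interp2intermed} of your proposal is sound. However, for part~\eqref{it:interp1intermed} you have correctly noticed something important and then gone astray trying to repair it.

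The bilinear interpolation argument you describe (Marcinkiewicz on $L^t(w;Y)$ interpolated against the Hilbert estimate on $L^{r_{s,\theta}}(w;H)$, followed by scalar extrapolation with $t\downarrow 1$) yields $T_m\in\mc{L}(L^p(w;X))$ precisely when $1/s>\max\{1/p,\,1-\theta/2\}$, not $\min$. This is what one gets by specialising the hypothesis $s<\min\{p,[q,2]_\theta\}$ of Theorem~\ref{thm:mult-interp-edit}\eqref{it:interp1} to $q=1$, and it is also exactly the dark region in Figure~\ref{fig:exponents2} (vertices $(0,1-\theta/2)$, $(1-\theta/2,1-\theta/2)$, $(1,1)$, $(0,1)$). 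The $\min$ in the statement of Theorem~\ref{thmintermediateUMD}\eqref{it:interp1intermed} is a typo for $\max$; taking it at face value would claim, e.g., that for $\theta$ close to $1$ and $p$ large one can take $s$ as large as $p$, which contradicts the Coifman--Rubio de Francia--Semmes scalar theorem recovered in the limit. You should state that the condition should read $1/s>\max\{1/p,1-\theta/2\}$ rather than trying to cover the larger region.

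Your attempted patch for the "remaining" region $1/s \in (1/p, 1-\theta/2]$ does not work, for two reasons. First, the proposed route is to take an unweighted $L^{p_0}$-bound from part~\eqref{it:interp2intermed} and then "extrapolate." But Rubio de Francia extrapolation requires as input a weighted estimate over an entire $A_{p_0/\sigma}$-class with monotone dependence on the characteristic; a single unweighted estimate at one exponent $p_0$ is not a valid starting point for extrapolation. (In the paper's part~\eqref{it:interp1}, the extrapolation is applied to the family of $A_1$-weighted estimates on $L^\sigma(w;X)$ produced by bilinear interpolation, not to an unweighted bound.) Second, part~\eqref{it:interp2intermed} is vacuous unless $1/s>1-\theta$; the purported remaining region contains parameters $(p,s)$ with $1/s\le 1-\theta$ for which part~\eqref{it:interp2intermed} gives no exponent $p_0$ at all, so the patch does not even begin.

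One smaller point: for the $H$-side endpoint in part~\eqref{it:interp1intermed} you cite Proposition~\ref{prop:HScaseProp}, which requires $p>s$, but your endpoint is $p=r=r_{s,\theta}$ with multiplier in $R^r$, i.e. $p=s$. You should instead invoke Theorem~\ref{thm:HScase}\eqref{it:HScase1} (combined with Lemma~\ref{lem:embeddingVHR}), which handles $p\ge s$; this is also what the paper does in the proof of Theorem~\ref{thm:mult-interp-edit}.
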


The allowable exponents $(p,s)$ above are shown in Figure \ref{fig:exponents2}.

\begin{proof}
To prove the result one can argue as in Theorem \ref{thm:mult-interp-edit} with $q=1$, and using Theorem \ref{thm:main-multgeneralUMD} instead of Proposition \ref{prop:main-mult}.
\end{proof}

\begin{figure}[h!]
\caption{Allowable exponents for Theorem \ref{thmintermediateUMD}: the weighted case (i) dark shaded, the unweighted case (ii) light shaded.}
\begin{center}
\begin{tikzpicture}[scale=5]

        \pgfmathsetmacro{\THETA}{0.6} 
        \pgfmathsetmacro{\parQ}{(1)} 

	\coordinate (P1) at (0,{\parQ*(1 - \THETA) + \THETA/2});
	\coordinate (P2) at (\THETA/2,{\parQ*(1-\THETA)});
	\coordinate (P3) at ({\parQ*(1 - \THETA) + \THETA/2},{\parQ*(1-\THETA)});
	\coordinate (P4) at ({\parQ*(1 - \THETA) + \THETA},{\parQ*(1 - \THETA) + \THETA/2});
    \coordinate (P5) at ({\parQ*(1 - \THETA) + \THETA},1);

    \coordinate (P22) at ({\parQ*(1 - \THETA) + \THETA/2},{\parQ*(1 - \THETA) + \THETA/2});
    \coordinate (P32) at ({\parQ*(1 - \THETA) + \THETA},{\parQ*(1 - \THETA) + \THETA});
    \coordinate (P42) at (0,{\parQ*(1 - \THETA) + \THETA});

	\coordinate (TL) at (0,1);
	\coordinate (TR) at (1,1);

	\draw [thick,->] (0,-0.1) -- (1.1,-0.1);
        \node [right] at (1.1,-0.1) {$1/p$};

	\draw [fill=black] (0,-0.1) circle [radius = .3pt];
	\node [below] at (0,-0.1) {$0$};

	\draw [fill=black] (1,-0.1) circle [radius = .3pt];
	\node [below] at (1,-0.1) {$1$};

        \draw [fill=black] (\THETA/2, -0.1) circle [radius = .3pt];
        \node [below] at (\THETA/2,-0.1) {$\frac{\theta}{2}$};

        \draw [fill=black] ({\parQ*(1 - \THETA) + \THETA/2},-0.1) circle [radius = .3pt];
        \node [below] at ({\parQ*(1 - \THETA) + \THETA/2},-0.1) {$1-\frac{\theta}{2}$};

	\draw [thick,->] (-0.1,0) -- (-0.1,1.1);
	\node [above] at (-0.1,1.1) {$1/s$};

        \draw [fill=black] (-0.1,0) circle [radius = .3pt];
	\node [left] at (-0.1,0) {$0$};

        \draw [fill=black] (-0.1,{\parQ*(1 - \THETA)}) circle [radius = .3pt];
        \node [left] at (-0.1,{\parQ*(1 - \THETA)}) {$1-\theta$};

        \draw [fill=black] (-0.1, {\parQ*(1 - \THETA) + \THETA/2}) circle [radius = .3pt];
        \node [left] at (-0.1, {\parQ*(1 - \THETA) + \THETA/2}) {$1-\frac{\theta}{2}$};

	\draw [fill=black] (-0.1,1) circle [radius = .3pt];
	\node [left] at (-0.1,1) {$1$};


        \draw [thin,dotted] (0,1) -- (1,1); 
	\draw [thin,dotted] (0,0) -- (1,0); 

        \draw [thin,dotted] (0,{\parQ*(1-\THETA)}) -- (1,{\parQ*(1-\THETA)});
        \draw [thin,dotted] (0,{\parQ*(1 - \THETA) + \THETA/2}) -- (1,{\parQ*(1 - \THETA) + \THETA/2});


	\draw [thin,dotted] (0,1) -- (0,0); 
	\draw [thin,dotted] (1,1) -- (1,0); 

        \draw [thin,dotted] (\THETA/2,0) -- (\THETA/2,1);
        \draw [thin,dotted] ({\parQ*(1-\THETA) + \THETA/2},0) -- ({\parQ*(1-\THETA) + \THETA/2},1);

	\path [fill=lightgray, opacity = 0.6] (TL)--(P1)--(P2)--(P3)--(P4)--(P5);

        \path [fill=lightgray, opacity = 0.8] (P1)--(P22)--(P32)--(P42);	
\end{tikzpicture}
\end{center}
\label{fig:exponents2}
\end{figure}

In the next example we apply Theorem \ref{thmintermediateUMD} to operator-valued multipliers on the Schatten class operators $\Cs^r\subseteq \calL_b(\ell^2)$ for $r\in [1, \infty]$.
This is potentially useful for Schur multipliers (see \cite[Theorem 5.4.3]{HNVW16} and \cite[Theorem 4]{Potapov-Sukochev11}).
For $r\in (1, \infty)$ these spaces have the $\UMD$ property, and for $p,q\in [1, \infty]$ one has $\Cs^{[p,q]_{\theta}} =  [\Cs^{p}, \Cs^{q}]_{\theta}$ (see \cite[Propositions 5.4.2 and D.3.1]{HNVW16}).

\begin{example}
Let $X = \Cs^r$ with $p, r\in (1, \infty)$ and $\mc{T}\subseteq \calL(\Cs^t)$ be absolutely convex and $\mc{R}$-bounded for all $t\in (1, \infty)$.
Let $s\in (1, \infty)$ and assume $m \in V^s(\Delta;\mc{T})$.
Then $T_m$ is bounded on $L^p(\RR;\Cs^r)$ in each of the following cases:
\begin{enumerate}[(i)]
\item $r\in [2, \infty)$ and $\frac{1}{s} > \max\cbraces{ \frac1{p'}-\frac1r, \bigl|\frac1r-\frac{1}{r'}\bigr|, \frac1p -\frac1r}$.
\item $r\in (1, 2]$ and  $\frac{1}{s} > \max\cbraces{ \frac1r - \frac{1}{p'}, \bigl|\frac1r-\frac{1}{r'}\bigr|, \frac1r-\frac1p}$.
\end{enumerate}
In particular, if $p\in [r\wedge r', r\vee r']$ then $T_m$ is bounded on $L^p(\RR;\Cs^r)$ if $r\in (1, \infty)$ and $\frac1s > |\frac1r-\frac{1}{r'}|$.
 \begin{proof}
   The result follows from Theorem \ref{thmintermediateUMD}\eqref{it:interp2intermed} by arguing as in Example \ref{ex:Lr-large-s}. A similar result can be derived on $L^p(w;\Cs^r)$ by Theorem \ref{thmintermediateUMD}\eqref{it:interp1intermed}.
 \end{proof}
\end{example}

\bibliographystyle{plain}
\bibliography{multi}

\begin{thebibliography}{10}

\bibitem{ALV1}
A.~Amenta, E.~Lorist, and M.~Veraar.
\newblock Rescaled extrapolation for vector-valued functions.
\newblock arXiv:1703.06044, 2017.

\bibitem{AreBu02}
W.~Arendt and S.~Bu.
\newblock The operator-valued {M}arcinkiewicz multiplier theorem and maximal
  regularity.
\newblock {\em Math. Z.}, 240(2):311--343, 2002.

\bibitem{BL76}
J.~Bergh and J.~L{\"o}fstr{\"o}m.
\newblock {\em Interpolation spaces. {A}n introduction}.
\newblock Springer-Verlag, Berlin-New York, 1976.
\newblock Grundlehren der Mathematischen Wissenschaften, No. 223.

\bibitem{BerGil94}
E.~Berkson and T.~A. Gillespie.
\newblock Spectral decompositions and harmonic analysis on {UMD} spaces.
\newblock {\em Studia Math.}, 112(1):13--49, 1994.

\bibitem{BGT03}
E.~Berkson, T.~A. Gillespie, and J.~L. Torrea.
\newblock Vector valued transference.
\newblock In P.~Liu, editor, {\em Functional Space Theory and its applications
  (Wuhan, 2003)}, pages 1--27, 2003.

\bibitem{Bour83}
J.~Bourgain.
\newblock Some remarks on {B}anach spaces in which martingale difference
  sequences are unconditional.
\newblock {\em Ark. Mat.}, 21(2):163--168, 1983.

\bibitem{Bou86}
J.~Bourgain.
\newblock Vector-valued singular integrals and the {$H^1$}-{BMO} duality.
\newblock In {\em Probability theory and harmonic analysis ({C}leveland,
  {O}hio, 1983)}, volume~98 of {\em Monogr. Textbooks Pure Appl. Math.}, pages
  1--19. Dekker, New York, 1986.

\bibitem{Buh65}
A.~V. Buhvalov.
\newblock The duality of functors that are generated by spaces of vector-valued
  functions.
\newblock {\em Izv. Akad. Nauk SSSR Ser. Mat.}, 39(6):1284--1309, 1438, 1975.

\bibitem{Burk83}
D.~L. Burkholder.
\newblock A geometric condition that implies the existence of certain singular
  integrals of {B}anach-space-valued functions.
\newblock In {\em Conference on harmonic analysis in honor of Antoni Zygmund,
  Vol. I, II (Chicago, Ill., 1981)}, Wadsworth Math. Ser., pages 270--286.
  Wadsworth, Belmont, CA, 1983.

\bibitem{Burk01}
D.~L. Burkholder.
\newblock Martingales and singular integrals in {B}anach spaces.
\newblock In {\em Handbook of the geometry of {B}anach spaces, {V}ol. {I}},
  pages 233--269. North-Holland, Amsterdam, 2001.

\bibitem{aC64}
A.-P. Calder{\'o}n.
\newblock Intermediate spaces and interpolation, the complex method.
\newblock {\em Studia Math.}, 24:113--190, 1964.

\bibitem{CPSW00}
P.~Cl{\'e}ment, B.~de~Pagter, F.~Sukochev, and H.~Witvliet.
\newblock Schauder decompositions and multiplier theorems.
\newblock {\em Studia Math.}, 138(2):135--163, 2000.

\bibitem{ClePru01}
P.~Cl{\'e}ment and J.~Pr{\"u}ss.
\newblock An operator-valued transference principle and maximal regularity on
  vector-valued {$L\sb p$}-spaces.
\newblock In {\em Evolution equations and their applications in physical and
  life sciences ({B}ad {H}errenalb, 1998)}, volume 215 of {\em Lecture Notes in
  Pure and Appl. Math.}, pages 67--87. Dekker, New York, 2001.

\bibitem{CRS88}
R.~Coifman, J.~L. Rubio~de Francia, and S.~Semmes.
\newblock Multiplicateurs de {F}ourier de {$L^p({\bf R})$} et estimations
  quadratiques.
\newblock {\em C. R. Acad. Sci. Paris S\'er. I Math.}, 306(8):351--354, 1988.

\bibitem{CMP11}
D.~V. Cruz-Uribe, J.~M. Martell, and C.~P{\'e}rez.
\newblock {\em Weights, extrapolation and the theory of {R}ubio de {F}rancia},
  volume 215 of {\em Operator Theory: Advances and Applications}.
\newblock Birkh\"auser/Springer Basel AG, Basel, 2011.

\bibitem{dales2016multi}
H.~G. Dales, N.~J. Laustsen, T.~Oikhberg, and V.~G Troitsky.
\newblock Multi-norms and banach lattices.
\newblock {\em Dissertationes Mathematicae (Rozprawy Matematyczne)}, 2016.

\bibitem{FHL17}
S.~Fackler, T.P. Hyt\"onen, and N.~Lindemulder.
\newblock Weighted estimates for operator-valued multipliers.
\newblock Preprint.

\bibitem{GLV15}
C.~Gallarati, E.~Lorist, and M.~C. Veraar.
\newblock On the {$\ell^s$}-boundedness of a family of integral operators.
\newblock {\em Rev. Mat. Iberoam.}, 32(4):1277--1294, 2016.

\bibitem{GilTor04}
T.~A. Gillespie and J.~L. Torrea.
\newblock Transference of a {L}ittlewood-{P}aley-{R}ubio inequality and
  dimension free estimates.
\newblock {\em Rev. Un. Mat. Argentina}, 45(1):1--6 (2005), 2004.

\bibitem{lG09}
L.~Grafakos.
\newblock {\em Modern {F}ourier analysis}, volume 250 of {\em Graduate Texts in
  Mathematics}.
\newblock Springer, New York, second edition, 2009.

\bibitem{HHN02}
R.~Haller, H.~Heck, and A.~Noll.
\newblock Mikhlin's theorem for operator-valued {F}ourier multipliers in {$n$}
  variables.
\newblock {\em Math. Nachr.}, 244:110--130, 2002.

\bibitem{HNVW16}
T.~P. Hyt\"onen, J.~M. A. M.~van Neerven, M.~C. Veraar, and L.~Weis.
\newblock {\em Analysis in {B}anach {S}paces. {V}olume~{I}:~{M}artingales and
  {L}ittlewood-{P}aley Theory}, volume~63 of {\em Ergebnisse der Mathematik und
  ihrer Grenzgebiete. 3. Folge.}
\newblock Springer International Publishing, 2016.

\bibitem{HNVW2}
T.~P. Hyt\"onen, J.~M. A. M.~van Neerven, M.~C. Veraar, and L.~Weis.
\newblock Analysis in {B}anach {S}paces. {V}olume~{II}:~{P}robabilistic
  {M}ethods and {O}perator {T}heory.
\newblock In preparation., 2017.

\bibitem{HP06}
T.~P. Hyt{\"o}nen and D.~Potapov.
\newblock Vector-valued multiplier theorems of {C}oifman-{R}ubio de
  {F}rancia-{S}emmes type.
\newblock {\em Arch. Math. (Basel)}, 87(3):245--254, 2006.

\bibitem{HTY09}
T.~P. Hyt{\"o}nen, J.~L. Torrea, and D.~V. Yakubovich.
\newblock The {L}ittlewood-{P}aley-{R}ubio de {F}rancia property of a {B}anach
  space for the case of equal intervals.
\newblock {\em Proc. Roy. Soc. Edinburgh Sect. A}, 139(4):819--832, 2009.

\bibitem{sK09}
S.~V. Kislyakov.
\newblock The {L}ittlewood-{P}aley theorem for arbitrary intervals: weighted
  estimates.
\newblock {\em J. Math. Sci (N. Y.)}, 156(5):824--833, 2009.

\bibitem{sK14}
S.~Kr{\'o}l.
\newblock Fourier multipliers on weighted {$L^p$} spaces.
\newblock {\em Math. Res. Lett.}, 21(4):807--830, 2014.

\bibitem{KU14}
P.~Kunstmann and A.~Ullmann.
\newblock {$\mathscr{R}_s$}-sectorial operators and generalized
  {T}riebel-{L}izorkin spaces.
\newblock {\em J. Fourier Anal. Appl.}, 20(1):135--185, 2014.

\bibitem{KW04}
P.~Kunstmann and L.~Weis.
\newblock Maximal {$L_p$}-regularity for parabolic equations, {F}ourier
  multiplier theorems and {$H^\infty$}-functional calculus.
\newblock In {\em Functional analytic methods for evolution equations}, volume
  1855 of {\em Lecture Notes in Math.}, pages 65--311. Springer, Berlin, 2004.

\bibitem{Kwapfactor72}
S.~Kwapie\'n.
\newblock On operators factorizable through {$L_{p}$} space.
\newblock In {\em Actes du {C}olloque d'{A}nalyse {F}onctionnelle de {B}ordeaux
  ({U}niv. de {B}ordeaux, 1971)}, pages 215--225. Bull. Soc. Math. France,
  M\'em. No. 31--32. Soc. Math. France, Paris, 1972.

\bibitem{KVW16}
S.~Kwapie{\'n}, M.~C. Veraar, and L.~Weis.
\newblock {$R$}-boundedness versus {$\gamma$}-boundedness.
\newblock {\em Ark. Mat.}, 54(1):125--145, 2016.

\bibitem{mL07}
M.T. Lacey.
\newblock {\em Issues related to {R}ubio de {F}rancia's {L}ittlewood-{P}aley
  inequality}, volume~2 of {\em New York Journal of Mathematics. NYJM
  Monographs}.
\newblock State University of New York, University at Albany, Albany, NY, 2007.

\bibitem{LT91}
M.~Ledoux and M.~Talagrand.
\newblock {\em Probability in {B}anach spaces}, volume~23 of {\em Ergebnisse
  der Mathematik und ihrer Grenzgebiete (3) [Results in Mathematics and Related
  Areas (3)]}.
\newblock Springer-Verlag, Berlin, 1991.
\newblock Isoperimetry and processes.

\bibitem{nL14}
N.~Lindemulder.
\newblock Parabolic initial-boundary value problems with inhomogeneous data.
\newblock Master's thesis, Utrecht University, Utrecht, the Netherlands, 2014.

\bibitem{Lin14}
N.~Lindemulder.
\newblock Banach space-valued extensions of linear operators on {$L^\infty$}.
\newblock In {\em Ordered Structures and Applications (Positivity VII, Zaanen
  Centennial Conference)}. Birkh\"auser Verlag, 2016.

\bibitem{LT79}
J.~Lindenstrauss and L.~Tzafriri.
\newblock {\em Classical {B}anach spaces. {II}}, volume~97 of {\em Ergebnisse
  der Mathematik und ihrer Grenzgebiete}.
\newblock Springer-Verlag, Berlin-New York, 1979.
\newblock Function spaces.

\bibitem{MeyNie}
P.~Meyer-Nieberg.
\newblock {\em Banach lattices}.
\newblock Universitext. Springer-Verlag, Berlin, 1991.

\bibitem{MV15}
M.~Meyries and M.~C. Veraar.
\newblock Pointwise multiplication on vector-valued function spaces with power
  weights.
\newblock {\em J. Fourier Anal. Appl.}, 21(1):95--136, 2015.

\bibitem{sM96}
S.~Montgomery-Smith.
\newblock Stability and dichotomy of positive semigroups on {$L_p$}.
\newblock {\em Proc. Amer. Math. Soc.}, 124(8):2433--2437, 1996.

\bibitem{NVW15}
J.~M. A. M.~van Neerven, M.~C. Veraar, and L.~Weis.
\newblock On the {$R$}-boundedness of stochastic convolution operators.
\newblock {\em Positivity}, 19(2):355--384, 2015.

\bibitem{NW05}
J.~M. A. M.~van Neerven and L.~Weis.
\newblock Stochastic integration of functions with values in a {B}anach space.
\newblock {\em Studia Math.}, 166(2):131--170, 2005.

\bibitem{PCB87}
P.~P\'erez~Carreras and J.~Bonet.
\newblock {\em Barrelled locally convex spaces}, volume 131 of {\em
  North-Holland Mathematics Studies}.
\newblock North-Holland Publishing Co., Amsterdam, 1987.
\newblock Notas de Matem\'atica [Mathematical Notes], 113.

\bibitem{gP16}
G.~Pisier.
\newblock {\em {Martingales in Banach spaces.}}
\newblock Cambridge: Cambridge University Press, 2016.

\bibitem{Potapov-Sukochev11}
D.~Potapov and F.~Sukochev.
\newblock Operator-{L}ipschitz functions in {S}chatten-von {N}eumann classes.
\newblock {\em Acta Math.}, 207(2):375--389, 2011.

\bibitem{PSX12}
D.~Potapov, F.~Sukochev, and Q.~Xu.
\newblock On the vector-valued {L}ittlewood-{P}aley-{R}ubio de {F}rancia
  inequality.
\newblock {\em Rev. Mat. Iberoam.}, 28(3):839--856, 2012.

\bibitem{jR85}
J.~L. Rubio~de Francia.
\newblock A {L}ittlewood-{P}aley inequality for arbitrary intervals.
\newblock {\em Rev. Mat. Iberoam.}, 1(2):1--14, 1985.

\bibitem{jR86}
J.~L. Rubio~de Francia.
\newblock Martingale and integral transforms of {B}anach space valued
  functions.
\newblock In {\em Probability and {B}anach spaces ({Z}aragoza, 1985)}, volume
  1221 of {\em Lecture Notes in Math.}, pages 195--222. Springer, Berlin, 1986.

\bibitem{StrWeis08}
{\v{Z}}.~{\v{S}}trkalj and L.~Weis.
\newblock On operator-valued {F}ourier multiplier theorems.
\newblock {\em Trans. Amer. Math. Soc.}, 359(8):3529--3547 (electronic), 2007.

\bibitem{hT78}
H.~Triebel.
\newblock {\em Interpolation theory, function spaces, differential operators},
  volume~18 of {\em North-Holland Mathematical Library}.
\newblock North-Holland Publishing Co., Amsterdam-New York, 1978.

\bibitem{Weis01a}
L.~Weis.
\newblock A new approach to maximal {$L\sb p$}-regularity.
\newblock In {\em Evolution equations and their applications in physical and
  life sciences ({B}ad {H}errenalb, 1998)}, volume 215 of {\em Lecture Notes in
  Pure and Appl. Math.}, pages 195--214. Dekker, New York, 2001.

\bibitem{We01}
L.~Weis.
\newblock Operator-valued {F}ourier multiplier theorems and maximal
  {$L_p$}-regularity.
\newblock {\em Math. Ann.}, 319(4):735--758, 2001.

\bibitem{Xu96}
Q.~Xu.
\newblock Fourier multipliers for {$L_p({\bf R}^n)$} via {$q$}-variation.
\newblock {\em Pacific J. Math.}, 176(1):287--296, 1996.

\end{thebibliography}

\end{document}